\DeclareSymbolFont{cyrletters}{OT2}{wncyr}{m}{n}
\DeclareMathSymbol{\Sha}{\mathalpha}{cyrletters}{"58}
\newcommand{\bF}{{\mathbb{F}}}
\newcommand{\bR}{{\mathbb{R}}}
\newcommand{\bZ}{{\mathbb{Z}}}
\newcommand{\Br}{{\mathbf{r}}}
\newcommand{\Bs}{{\mathbf{s}}}
\newcommand{\Bt}{{\mathbf{t}}}
\newcommand{\B}{{\mathcal{B}}}
\newcommand{\D}{{\mathcal{D}}}
\newcommand{\M}{{\mathcal{M}}}
\newcommand{\rank}{\operatorname{rank}}
\newcommand{\ep}{\varepsilon}
\newcommand{\upchi}{{\raise.35ex\hbox{$\chi$}}}
\newtheorem{theorem}{Theorem}[section]
\newtheorem{proposition}[theorem]{Proposition}
\newtheorem{lemma}[theorem]{Lemma}
\newtheorem{conjecture}[theorem]{Conjecture}
\theoremstyle{definition}
\newtheorem{definition}[theorem]{Definition}
\newtheorem{remark}[theorem]{Remark}
\numberwithin{equation}{section}
\begin{document}
	
	\title{Hilbert's tenth problem for systems of diagonal quadratic forms, and B\"{u}chi's problem}
	\thanks{The author is supported by NSERC Discovery Grant RGPIN-2024-06810.} 
		
		\author{Stanley Yao Xiao}
	\address{Department of Mathematics and Statistics \\
		University of Northern British Columbia \\
		3333 University Way \\
		Prince George, British Columbia, Canada \\  V2N 4Z9}
	\email{StanleyYao.Xiao@unbc.ca}
	\indent
	
	
	\begin{abstract} In this paper we complete B\"{u}chi's proof that there is no decision algorithm for the solubility in integers of arbitrary systems of diagonal quadratic form equations, by proving the assertion that whenever $x_1^2, \cdots, x_5^2$ are five squares such that the second differences satisfy
	\[x_{k+2}^2 - 2 x_{k+1}^2 + x_k^2 = 2\]
	for $k = 1,2,3$, then they must be consecutive. This answers a question of J.~Richard~B\"{u}chi. 
	\end{abstract}
	
	\maketitle

	\section{Introduction}
	\label{Intro}

	In this paper we prove the following theorem:
	
	\begin{theorem} \label{maincor} There is no algorithm which decides the existence of integer solutions to an arbitrary system of diagonal quadratic form equations. 
	\end{theorem}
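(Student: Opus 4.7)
The plan is to derive Theorem~\ref{maincor} from the combination of (i) the negative solution to Hilbert's tenth problem over $\bZ$ due to Matiyasevich--Davis--Putnam--Robinson and (ii) B\"{u}chi's reduction, once the five-squares statement in the abstract has been established as a separate technical result in the body of the paper. First I would invoke the MRDP theorem, which guarantees that the set of integer-solvable polynomial Diophantine equations is not recursive. The problem therefore becomes: show that the predicate ``$y=x^2$'' is definable by a system of diagonal quadratic form equations (i.e.\ equations of the shape $\sum_i a_i x_i^2 = c$), since then every polynomial equation can be mechanically rewritten, via the standard identity $2ab=(a+b)^2-a^2-b^2$ and the introduction of auxiliary variables for each monomial, into an equivalent diagonal quadratic system, transferring the undecidability.

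The key step is B\"{u}chi's observation that squaring is Diophantine over diagonal quadratic systems provided one can assert the relation ``$x_1^2,x_2^2,\dots,x_M^2$ are consecutive squares'' using only such equations. The natural diagonal quadratic encoding is the second-difference system
\[
x_{k+2}^2 - 2x_{k+1}^2 + x_k^2 = 2, \qquad k = 1,\dots,M-2,
\]
whose obvious solutions are arithmetic progressions of the form $x_k = \pm(n+k-1)$. B\"{u}chi proved that if some $M$ has the property that these are the \emph{only} solutions, then squaring can be captured by a diagonal quadratic system (one encodes $y=x^2$ by forcing $x^2$ to be one term in such a progression whose initial conditions fix the starting value). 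Under that hypothesis, combining with MRDP gives Theorem~\ref{maincor}. B\"{u}chi conjectured $M=5$ suffices.

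Thus, given the main technical result of this paper, which is precisely the $M=5$ statement recalled in the abstract, the deduction of Theorem~\ref{maincor} is purely formal: a straightforward algebraic translation replaces every monomial in an arbitrary polynomial Diophantine equation by fresh variables constrained by diagonal quadratic relations, with each ``$y=x^2$'' introduced via the five-square progression. Any hypothetical decision algorithm for diagonal quadratic systems would then decide general polynomial Diophantine equations, contradicting MRDP. The main obstacle is of course not this reduction, which is routine bookkeeping, but rather the five-squares theorem itself; its proof, carried out in the remaining sections of the paper, is where all the essential Diophantine geometry lies (handling the curve or surface cut out by the three second-difference equations, ruling out sporadic integral points by $p$-adic, elliptic, or descent-theoretic arguments). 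Once that input is in hand, Theorem~\ref{maincor} follows immediately.
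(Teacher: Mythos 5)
Your proposal matches the paper's route exactly: the paper derives Theorem~\ref{maincor} by citing B\"{u}chi's reduction (as reproduced in Vojta's article) from MRDP together with the five-squares statement, and then devotes the rest of the paper to proving that statement unconditionally as Theorem~\ref{MT0}. Your sketch of the reduction (encoding multiplication via $2ab=(a+b)^2-a^2-b^2$ and capturing $y=x^2$ through the second-difference system) is the standard B\"{u}chi argument the paper relies on, so the two approaches coincide.
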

	
	Theorem \ref{maincor} is a refinement of the negative solution to Hilbert's tenth problem, proved by the celebrated work of Davis, Putnam, and Robinson and Matiyasevich (see \cite{DMR}). In particular, as opposed to an arbitrary system of polynomial equations defined over $\bZ$, the solubility of even the simplest non-linear systems, namely those defined by a system of diagonal quadratic forms, cannot be decided. This also answers a question of J.~L.~Britton in \cite{Bri}. \\
	
	J.~Richard B\"{u}chi proved Theorem \ref{maincor}, provided that the following statement, known as \emph{B\"{u}chi's problem}, holds: 
	
	\begin{conjecture}[B\"{u}chi's problem] \label{Buchiconj}There exists a positive integer $N$ such that if $x_1^2, \cdots, x_n^2$ is a sequence of increasing squares with constant second difference equal to $2$, then $n \leq N$. 
	\end{conjecture}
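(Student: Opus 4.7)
The plan is to establish B\"uchi's conjecture by proving the stronger rigidity statement announced in the abstract: whenever five positive integer squares $x_1^2, \ldots, x_5^2$ satisfy the constant second-difference condition, one must have $x_{k+1} = x_k + 1$ for each $k$ (up to a global sign). Granting this, Conjecture \ref{Buchiconj} follows with $N = 4$, since the trivial consecutive-square sequences are excluded (they have unbounded length), and any other candidate sequence of length $\geq 5$ would contain five terms violating rigidity.

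My first step is to collapse the system to two free variables. Setting $a = x_1$ and $b = x_2$, the three-term recurrence $x_{k+2}^2 = 2x_{k+1}^2 - x_k^2 + 2$ forces
\[
x_3^2 = 2b^2 - a^2 + 2, \qquad x_4^2 = 3b^2 - 2a^2 + 6, \qquad x_5^2 = 4b^2 - 3a^2 + 12.
\]
The problem becomes: determine all $(a, b) \in \bZ^2$ for which these three polynomials are simultaneously perfect squares. The family $b = a+1$ manifestly yields $(a+2)^2, (a+3)^2, (a+4)^2$; rigidity asserts no other pair $(a, b)$ works.

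Geometrically, the three conditions cut out a surface $\Sigma \subset \bA^5$ containing a distinguished line $L$ of trivial points. By Vojta's analysis $\Sigma$ is of general type, so Bombieri--Lang predicts $\Sigma(\bZ) \setminus L(\bZ)$ is finite; an unconditional result must bypass Bombieri--Lang. My approach is to pair the three conditions to extract auxiliary equations of lower dimension: for instance, eliminating $a^2$ between the $x_3^2$ and $x_4^2$ relations gives $x_4^2 - 2x_3^2 = 2 - b^2$, and combining with a similar relation involving $x_5^2$ confines $(b, x_3, x_4, x_5)$ to a projective curve $C$. After further elimination, I would aim to reduce to a single explicit curve of genus $\geq 2$ and then apply Faltings' theorem together with effective descent (for example, $2$-descent on its Jacobian) to enumerate the integer points.

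The principal obstacle is effective unconditional finiteness. The descent from $\Sigma$ to a curve $C$ is generically controlled by the Mordell--Weil group of a family of elliptic curves over $\bQ(t)$, whose ranks are a priori unbounded. A successful proof likely rests on a specific algebraic identity or symmetry hidden in the B\"uchi equations that reveals $C$ as a twist or quotient of a classically understood curve, replacing the surface-level Bombieri--Lang obstruction by a curve-level finiteness amenable to classical Diophantine methods. Identifying this hidden structure, and executing the ensuing descent so that every integer point can be accounted for and shown to correspond to $b = a + 1$, is where I expect the technical core of the proof to lie.
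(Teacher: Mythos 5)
Your proposal does not contain a proof: the decisive step --- showing that the only integer points on the surface cut out by the three square conditions lie on the line $b = a+1$ --- is precisely what you defer to ``identifying a hidden structure'' and ``executing the ensuing descent,'' and no mechanism for carrying this out is supplied. Two concrete assertions in the sketch are also wrong. First, the surface attached to the quintuple system is \emph{not} of general type: it is a K3 surface, and the paper notes explicitly that for this reason Bombieri--Lang does not apply and Vojta's method is unavailable (Vojta's conditional result needs eight squares, where the relevant surface genuinely is of general type). Second, eliminating $a^2$ leaves two quadric relations in the four affine unknowns $(b, x_3, x_4, x_5)$, i.e.\ a complete intersection of two quadrics in $\bP^4$ after homogenizing --- a \emph{surface}, not a projective curve; fibering over $b$ yields genus-one curves, and, as you yourself concede, the ranks in such a family are uncontrolled, so neither Faltings nor descent on a Jacobian closes the argument. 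The reduction to five terms and the observation that consecutive squares must be excluded from the count are fine, but everything after that is a wish list rather than an argument.

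The paper's route is entirely different and essentially elementary. Writing $u_k^2 = (s+k)^2 - 4D$ converts each square condition into a factorization $D = x_k y_k$ with $(x_{k+1}+y_{k+1}) - (x_k+y_k) = 1$, and a gcd analysis (Lemma \ref{gcddecomp}) shows how the factorizations at consecutive indices interlock. Iterating this produces a \emph{positive} parametrizing sequence of $32$ integers for a putative quintuple; the five sub-quadruples each force a skew-symmetric $4\times 4$ system to be singular, yielding five Pfaffian equations, which consolidate into the bilinear structural equations of Proposition \ref{quinstruc}. Demanding that the resulting $10\times 5$ matrix be rank-deficient ultimately forces some entries of the sequence to vanish, contradicting positivity. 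Nothing resembling Faltings' theorem, Jacobians, or Mordell--Weil groups appears anywhere in the argument.
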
 
	
	Conjecture \ref{Buchiconj} was proved by P.~Vojta \cite{Voj} with $N = 8$ under the assumption of the Bombieri--Lang conjecture. B\"{u}chi's argument was reproduced in \cite{Voj}.\\
	
	Our principal contribution is an unconditional proof of Conjecture \ref{Buchiconj}, with $N = 4$: 
	
	\begin{theorem} \label{MT0} Let $x_1^2, \cdots, x_5^2$ be integer squares with constant second difference equal to $2$. Then there exists an integer $x_0$ such that $x_j^2 = (x_0 + j)^2$ for $j = 1, 2,3,4, 5$. 
	\end{theorem}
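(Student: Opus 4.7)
The first step is to convert the three second‑difference equations into a parametric form. Because the sequence $x_k^2$ has constant second difference $2$ for $k=1,\ldots,5$, there exist integers $b,c$ such that $x_k^2 = k^2+bk+c$ for every $k\in\{1,\ldots,5\}$. The desired conclusion $x_j^2=(x_0+j)^2$ is equivalent to the polynomial identity $4c=b^2$, i.e.\ the quadratic $k^2+bk+c$ is a perfect square polynomial. A short parity analysis (looking at the equation modulo $4$ and $8$) rules out the possibility that $b$ is odd: for odd $b$, at least one of the five values $k^2+bk+c$ is forced to be $\equiv 2\pmod 4$ and hence cannot be a square. So I may write $b=2m$, $d=c-m^2$, and the problem becomes
\[
(m+j)^2 + d \ \text{is an integer square for } j=1,2,3,4,5 \ \Longrightarrow\ d=0.
\]

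Next, I would reformulate each square condition as a divisor factorisation of $d$. Writing $y_j^2=(m+j)^2+d$ gives
\[
(y_j - m - j)(y_j + m + j) = d, \qquad j=1,\ldots,5.
\]
Thus we obtain five factorisations $d=\alpha_j\beta_j$ with $\beta_j-\alpha_j=2(m+j)$, so the ``gaps'' $\beta_j-\alpha_j$ run through an arithmetic progression of length $5$ with common difference $2$. The trivial solutions $d=0$ correspond to $\alpha_j=0$ for some $j$, so under the assumption $d\neq 0$ all ten of the $\alpha_j,\beta_j$ are nonzero divisors of $d$. The task is to show that no such configuration of five factorisations exists for $d\neq 0$.

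The combinatorial--arithmetic heart of the argument is then to exploit all five equations simultaneously. The symmetric pairs $(j,6-j)$ give $\alpha_1\beta_1=\alpha_5\beta_5=d$ and $(\beta_5-\alpha_5)-(\beta_1-\alpha_1)=8$, and similarly $\alpha_2\beta_2=\alpha_4\beta_4=d$ with gap difference $4$, while the middle equation $j=3$ gives $\beta_3-\alpha_3=2(m+3)$. Eliminating $\alpha_3,\beta_3,m$ from the resulting system should produce a single algebraic equation (a curve) in the remaining unknowns, which I expect to be of geometric genus at least $2$. At that point Faltings' theorem guarantees finiteness of rational points, but one wants an effective and unconditional proof, so the decisive step is to identify the curve explicitly and determine all its integer points—e.g.\ by an explicit $2$-descent, or by Chabauty--Coleman applied to a specific rank$\leq g-1$ Jacobian, or by reducing to a Mordell or Pell--Fermat equation whose integer solutions are classical. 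The hard part of the argument is exactly this last step: Vojta's conditional proof (for $n\geq 8$) required Bombieri--Lang because the surface cut out by all three second‑difference equations is of general type, so an unconditional proof with $n=5$ must take advantage of extra algebraic structure—most plausibly a polynomial identity tying the five $y_j$ together that collapses the analysis onto a single curve whose rational points are known. Identifying that identity (or the right curve) is where I expect nearly all of the work to lie.
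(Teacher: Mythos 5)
Your reduction is sound and in fact coincides with the paper's own setup: the paper likewise shows (Section 2) that the five squares lie on a monic quadratic $k^2+2ak+b$ with $b-a^2\equiv 0\pmod 4$, normalizes to $f(x)=x^2-4D$, and rewrites each square condition as a factorization $4D=(s+k-u_k)(s+k+u_k)$ whose two factors have sum increasing by $2$ — exactly your five factorizations $d=\alpha_j\beta_j$ with $\beta_j-\alpha_j=2(m+j)$ (Definition \ref{buchim}, ``$n$-tuples of B\"uchi pairs''). Up to that point you have reproduced Sections 2--3 of the paper.

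However, everything after that is a genuine gap: the entire content of the theorem is the claim that no such configuration of five factorizations exists with $d\neq 0$, and your proposal does not prove it — it defers to ``identify the curve explicitly and determine all its integer points,'' which is neither carried out nor plausibly available by the route you sketch. The locus cut out by the three second-difference equations in $\bP^5$ (with $x_0=1$) is a \emph{surface}, not a curve — the paper notes $X_5$ is a K3 surface and its affine slice is only of log-general type — so there is no evident elimination collapsing the problem onto a single curve of genus $\ge 2$, and Faltings/Chabauty do not apply to surfaces; this is precisely why Vojta's argument needed Bombieri--Lang and why the paper says its method must be ``necessarily quite disjoint'' from that circle of ideas. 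What the paper actually does is entirely elementary but structurally very different: it parametrizes the quintuple of factorizations by $32$ positive integers $s_0,\dots,s_{31}$ obtained from iterated $\gcd$ decompositions (Lemma \ref{gcddecomp}, Proposition \ref{posseq}), derives from each of the five sub-quadruples a skew-symmetric $4\times 4$ system whose singularity yields a Pfaffian equation, consolidates these into ten bilinear ``structural equations'' (Proposition \ref{quinstruc}), and then shows by a rank analysis of the associated $10\times 10$ and $10\times 5$ matrices that any solution forces some $s_j=0$, contradicting positivity. That combinatorial/linear-algebraic mechanism is the proof; without it (or a genuine substitute) your argument establishes only the framing, not the theorem.
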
 
	
	Theorem \ref{MT0} directly answers a question of J.~R.~B\"{u}chi in its original form (see \cite{Bu} and \cite{Maz}, and \cite{Pasel} for a more recent account).  \\
	
	B\"{u}chi proved that there is no algorithm which decides the existence of integral solutions to \emph{systems of diagonal quadratic form equations} of the shape 
	\begin{equation} \label{diag} \sum_{j=1}^N a_{ij} x_j^2 = b_i
	\end{equation}
	for $i = 1, \cdots, M$ and $j = 1, \cdots, N, a_{ij}, b_i \in \bZ$, provided that Conjecture \ref{Buchiconj} holds. \\
	
	Theorem \ref{MT0} is the best possible, since it is known that there exist infinitely many quadruples of squares with second difference equal to $2$, which do not consist of consecutive squares. Geometrically, the homogeneous system of equations 
	\begin{equation} \label{buchsys} \begin{matrix} x_3^2 & - & 2 x_2^2 & + & x_1^2 & = & 2x_0^2 \\ \vdots & & \vdots & & \ddots &  &  \vdots \\ x_{n}^2 & - & 2 x_{n-1}^2 & + & x_{n-2}^2 & = & 2x_0^2\end{matrix} 
	\end{equation}
	defines a projective surface $X_n$ of general type when $n \geq 6$, but not when $n \leq 5$. Indeed $X_3$ is simply a quadric surface. $X_4$ is a del Pezzo surface of degree $4$, and $X_5$ is a $\text{K3}$ surface. Thus, it is expected that there may be infinitely many non-trivial solutions when $n \leq 4$. Indeed, infinite parametric families exist. A classical example, due to D.~Hensley (unpublished), is given by the parametrization 
	\begin{equation} \label{Hens} (x_1, x_2, x_3, x_4) = (2t^3 +12t^2 + 19t + 6, 2t^3 + 14t^2 + 31t + 23, 2t^3 + 16t^2 + 41t + 32, 2t^3 + 18t^2 + 49t+ 39), t \in \bZ.
	\end{equation}
	A generalization which produces many other polynomial parametrized families of non-trivial quadruples of four squares with finite difference equal to $2$ was produced by X.~Vidaux in \cite{Vid}. \\
	
	We note that the proof of Theorem \ref{MT0} shows that the affine variety $\widetilde{X_5}$ defined by specializing $x_0 = 1$ has no integer points outside the closed subset consisting of trivial tuples. Since $X_5$ is a K3 surface, $\widetilde{X_5}$ is \emph{log-general type}, so an analogue of Lang's conjecture, interpreted as a higher dimensional analogue of Siegel's theorem on integer points on algebraic curves of positive genus, would indicate that $\widetilde{X_5}$ has no integer points outside of a finite union of curves. \\
	
	Since $X_5$ is a K3 surface and not a surface of general type, the Bombieri--Lang conjecture does not apply, and so our argument is necessarily quite disjoint from that of Vojta in \cite{Voj}. \\ 
	
	We go further than previous authors in the triple and quadruple cases, where we obtain a complete description of all possible tuples. To this end, we define:
	
	\begin{definition} Let $n$ be a positive integer. We say an $n$-tuple $(u_1, \cdots, u_n)$ of positive integers is a \emph{B\"{u}chi $n$-tuple} if $u_1^2, \cdots, u_n^2$ is an increasing sequence of squares with second difference equal to $2$. 
	\end{definition}
	
We have the following characterization of B\"{u}chi triples.

\begin{theorem} \label{Buchtripthm} Let $(u_1, u_2, u_3)$ be a B\"{u}chi triple. Then there exist non-negative integers $s_1, s_3, s_4, s_6$ and $\beta \in \{1,2\}$ satisfying $s_1 s_3 - s_4 s_6 = \beta$, such that 
\[u_1 = \frac{s_1 s_3 - s_3 s_4 + s_1 s_6 + s_4 s_6}{2 \beta}, u_2 =  \frac{s_3 s_4 + s_1 s_6}{2 \beta}, u_3 =  \frac{s_1 s_3 + s_3 s_4 - s_1 s_6 + s_4 s_6}{2 \beta}.\]
\end{theorem}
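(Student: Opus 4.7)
The plan is to factor the Büchi relation
\[u_1^2 + u_3^2 - 2 u_2^2 = 2\]
into a product equation and then apply the standard parameterization of solutions to $AB = CD$ in non-negative integers. Reducing modulo $2$ yields $u_1 \equiv u_3 \pmod{2}$, so $M := (u_1+u_3)/2$ and $N := (u_3-u_1)/2$ are both integers; substituting gives $M^2+N^2 = u_2^2+1$, or equivalently
\[(M-1)(M+1) = (u_2-N)(u_2+N).\]

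The central step applies the elementary fact that every solution of $AB = CD$ in non-negative integers admits a parameterization $A = s_1 s_3$, $B = s_4 s_6$, $C = s_3 s_4$, $D = s_1 s_6$, constructed, for instance, by setting $s_1 = \gcd(A,D)$ and unwinding the resulting coprime quotients. Applied to $A = M+1$, $B = M-1$, $C = u_2+N$, $D = u_2-N$, this immediately gives
\[s_1 s_3 - s_4 s_6 = A - B = 2,\]
which corresponds to $\beta = 2$ in the theorem statement.

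I expect the subtle point to be the $\beta = 1$ case. This should arise exactly when $M$ is odd, since then $A = M+1$ and $B = M-1$ are both even; the divisibility $4 \mid AB = CD$, combined with the fact that $C$ and $D$ have the same parity (both equal to $u_2 \pm N$ and $C+D = 2u_2$), then forces $C$ and $D$ to be even as well. Dividing $A, B, C, D$ through by $2$ reduces to $A'B' = C'D'$ with $A', B' = (M\pm1)/2$ consecutive integers. Applying the factorization lemma to this reduced equation produces a parameterization satisfying $s_1 s_3 - s_4 s_6 = 1$, giving $\beta = 1$. When $M$ is even, no such further reduction is available and only $\beta = 2$ occurs.

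In either case, substituting the parameterization into $u_1 = M - N$, $u_3 = M + N$, $u_2 = (C + D)/2$ and using the identity $s_1 s_3 - s_4 s_6 = \beta$ to eliminate the additive constants of $\pm 1$ in $A, B$ yields, after expansion, the closed-form expressions for $u_1, u_2, u_3$ stated in the theorem. The main obstacle is the case bookkeeping between the two parity regimes for $M$, together with verifying non-negativity of all four $s_i$ in each case; once this is organized, the remaining algebraic verification reduces to a direct computation.
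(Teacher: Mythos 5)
Your proposal is correct, but it takes a genuinely different route from the paper. The paper never works with the second-difference equation directly: it passes to the triple of B\"{u}chi pairs $(x_j,y_j)$ with $x_jy_j=D$ and $x_j+y_j=s+j$ (Proposition \ref{tprop1}), decomposes these into the eight gcd-parameters $s_0,\dots,s_7$ of (\ref{btrips}) via Lemma \ref{gcddecomp} and Proposition \ref{posseq}, and then uses the linear relations of Proposition \ref{tri-p} to eliminate $s_0,s_2,s_5,s_7$ in favour of $s_1,s_3,s_4,s_6$ and the structural invariant $\beta=u_{3,1}-u_{1,3}$. You instead apply the four-number lemma once to $(M-1)(M+1)=(u_2-N)(u_2+N)$; your four products are exactly $s_1s_3=M+1$, $s_4s_6=M-1$, $s_3s_4=u_2+N$, $s_1s_6=u_2-N$, so the final ``expansion'' you defer is actually immediate: the stated numerators are just the linear combinations recovering $M-N$, $u_2$, $M+N$ (and the non-negativity and zero edge cases you flag are handled by the usual $s_1=\gcd(A,D)$ construction, since $M\geq 1$ and $u_2\geq N\geq 0$ for an increasing triple). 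Your argument is shorter and self-contained, and since the theorem only asserts existence, your generic $\beta=2$ construction already suffices; the $M$-odd reduction to $\beta=1$ is optional for the statement as written, although it is precisely what recovers the paper's parametrization (in a numerical test it reproduces the paper's $s_i$ exactly), and the paper's heavier route is what produces the relations (\ref{tripara}) and the identification of $\beta$ as $u_{3,1}-u_{1,3}$ that are needed downstream for quadruples and quintuples. One caveat that applies to both arguments: the theorem as printed has denominator $2\beta$, but your construction gives numerator $2M-2N=2u_1$ over $\beta=2$, i.e.\ denominator $\beta$, and the paper's own computation from (\ref{trithm2}) likewise yields $x_1-y_1=(s_1s_3-s_3s_4+s_1s_6+s_4s_6)/\beta$; the extra factor $2$ in the statement appears to be a typo, and your proof establishes the corrected form exactly as the paper's does.
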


Similarly, we obtain the following theorem capturing B\"{u}chi quadruples.

\begin{theorem} \label{Buchquadthm} Let $(u_1, u_2, u_3, u_4)$ be a B\"{u}chi quadruple. Then there exist non-negative integers $s_0, s_1, \cdots, s_{15}$, $\beta_1, \beta_2 \in \{1,2\}$, and $\delta \in \{1,3\}$ satisfying the relations
\begin{equation} 
\begin{matrix} s_8 & = & \dfrac{(3/\delta) s_4 s_{11} - s_1 s_{14}}{\beta_1 (2/\beta_2) s_7} & s_{13} & = & \dfrac{s_4 s_{11} + \delta s_1 s_{14}}{\beta_1 (2/\beta_2) s_2} \\ \\
s_3 & = & \dfrac{ (s_1 s_{14} + (3/\delta) s_4 s_{11}) s_9 - 2 s_4 s_{14} s_{12}}{2 s_2 s_7} & s_6 & = & \dfrac{2 s_1 s_{11} s_9 - (\delta s_1 s_{14} - s_4 s_{11})s_{12}}{2 s_2 s_7} \\ \\
s_0 & = & \dfrac{s_9 s_{11} - \delta s_{12} s_{14}}{\beta_1 s_2} & s_5 & = & \dfrac{(3/\delta) s_9 s_{11} - s_{12} s_{14}}{\beta_1 s_7} \\ \\
s_{10} & = & \dfrac{s_1 s_9 + s_4 s_{12}}{\beta_1 s_2} & s_{15} & = & \dfrac{-s_1 s_9 + s_4 s_{12}}{\beta_1 s_7}
\end{matrix}
\end{equation} 
with 
\[ 
\begin{matrix} x_1 & = & s_0 s_2 s_4 s_6 s_8 s_{10} s_{12} s_{14}, & y_1 & = & s_1 s_3 s_5 s_7 s_9 s_{11} s_{13} s_{15}, \\
x_2 & = & s_0 s_1 s_4 s_5 s_8 s_9 s_{12} s_{13}, & y_2 & = & s_2 s_3 s_6 s_7 s_{10} s_{11} s_{14} s_{15}, \\
x_3 & = & s_0 s_1 s_2 s_3 s_8 s_9 s_{10} s_{11}, & y_3 & = & s_4 s_5 s_6 s_7 s_{12} s_{13} s_{14} s_{15}, \\
x_4 & = & s_0 s_1 s_2 s_3 s_4 s_5 s_6 s_7, & y_4 & = & s_8 s_9 s_{10} s_{11} s_{12} s_{13} s_{14} s_{15}. \end{matrix}
\]
such that 
\[u_k = x_k - y_k \]
for $k = 1,2,3,4$. 
\end{theorem}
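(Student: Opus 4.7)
The strategy is to lift the B\"{u}chi triple parametrization of Theorem~\ref{Buchtripthm} to the quadruple setting by applying it to the two overlapping triples $(u_1, u_2, u_3)$ and $(u_2, u_3, u_4)$, and then performing a factor-decomposition descent to refine the eight resulting intermediate parameters into the sixteen variables $s_0, \ldots, s_{15}$ indexed by the hypercube $\{0, 1\}^4$.

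Setting $d_k = u_{k+1} - u_k$ and $\sigma_k = u_{k+1} + u_k$ recasts the B\"{u}chi condition as the three factorizations $d_k \sigma_k = c + 2(k-1)$ for $k = 1, 2, 3$, where $c = u_2^2 - u_1^2$. Applying Theorem~\ref{Buchtripthm} to each consecutive sub-triple gives two parametrizations: one by parameters $(p_1, p_3, p_4, p_6)$ with constraint $p_1 p_3 - p_4 p_6 = \beta_1 \in \{1,2\}$, and another by $(q_1, q_3, q_4, q_6)$ with $q_1 q_3 - q_4 q_6 = \beta_2 \in \{1, 2\}$. Matching the expressions for the shared coordinates $u_2$ and $u_3$ forces polynomial compatibility identities, which admit a bilinear splitting: each of the eight intermediate parameters decomposes uniquely, up to a choice of sign, as a product of two non-negative integer factors. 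Indexing the resulting sixteen factors by $\{0,1\}^4$ so that the $k$-th bit distinguishes the two halves of the decomposition aligned with the $k$-th B\"{u}chi equation produces the variables $s_i$. The constants $\beta_1, \beta_2 \in \{1, 2\}$ encode the $2$-adic valuation branches of the two triple parametrizations, while $\delta \in \{1,3\}$ records which of the three consecutive integers $c, c+2, c+4$ is divisible by $3$.

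Once the parametrization is in place, the B\"{u}chi conditions reduce to an algebraic identity check. The key observation is that $x_k \, y_k = \prod_{i=0}^{15} s_i$ for every $k$, since each index $i$ contributes to exactly one of $x_k$ or $y_k$ according to its $k$-th bit. Consequently $u_k^2 = x_k^2 + y_k^2 - 2 \prod_i s_i$, and the constant term cancels in every second difference, reducing $u_{k+1}^2 - 2u_k^2 + u_{k-1}^2 = 2$ to polynomial identities in $x_k, y_k$ alone. Expanding these yields the eight stated relations among the $s_i$. For the converse direction, any B\"{u}chi quadruple is shown to arise from some choice of parameters by iterating greatest-common-divisor extraction between the three pairs $(d_k, \sigma_k)$ and between adjacent $d_k, \sigma_{k-1}$, recursively peeling off the sixteen primitive factors.

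The main obstacle is ensuring both the integrality and the surjectivity of the descent. The non-linear interdependence of the eight relations means that the factorization branches must be chosen consistently at every step so that each of the sixteen factors remains a non-negative integer; this is essentially a careful $2$-adic and $3$-adic analysis of the six quantities $d_k, \sigma_k$ and their pairwise greatest common divisors, with $\beta_1, \beta_2, \delta$ precisely parametrizing the allowable branches. A subsidiary concern is handling degenerate cases where a GCD equals $1$, forcing a factor to be trivial; these must be absorbed into the final parametrization by permitting $s_i = 1$ at the appropriate indices without disrupting the product-of-eight structure of the $x_k$ and $y_k$.
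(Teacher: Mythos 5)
Your high-level strategy---parametrize the two overlapping triples and weave them together---is indeed the paper's stated starting point, but the proposal omits the two mechanisms that actually produce the theorem, and your chosen normal form points away from them. The sixteen variables $s_0,\ldots,s_{15}$ do not arise from a ``bilinear splitting'' forced by matching $u_2,u_3$ across two applications of Theorem \ref{Buchtripthm}. They come from the B\"{u}chi-pair normal form $u_k = x_k - y_k$ with $x_1y_1=\cdots=x_4y_4$ and $(x_{k+1}+y_{k+1})-(x_k+y_k)=1$: because the products are \emph{equal}, every prime power of the common product distributes over the four pairs, and Lemma \ref{gcddecomp} together with the inductive gcd-transfer argument of Proposition \ref{posseq} yields the $\{0,1\}^4$-indexed factorization. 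Your normal form $d_k\sigma_k = c+2(k-1)$ has three \emph{distinct} products, so there is no common quantity whose divisors can be transferred between pairs, and the asserted ``unique decomposition of each intermediate parameter as a product of two non-negative factors'' is exactly the point that needs proof; it is not supplied by compatibility of the shared coordinates.

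Second, the eight displayed relations are not obtained by ``expanding'' the second-difference identities after cancelling $\prod_i s_i$. In the paper one applies Proposition \ref{tri-p} to the two sub-triples of (\ref{bquads}), obtaining a linear system (\ref{pfaff1}) in $(s_3,s_6,s_9,s_{12})$ whose coefficient matrix is skew-symmetric; a nonzero solution forces the Pfaffian equation (\ref{pfaffeq}), and the coprimality statement (\ref{gcd1}) is what makes the quantity $\delta$ of (\ref{4bcalc}) an integer. Only then can one solve for $s_8,s_{13}$, use the rank-two structure to solve for $s_3,s_6$, and back-substitute to obtain $s_0,s_5,s_{10},s_{15}$. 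None of the skew-symmetry, the Pfaffian condition, or the coprimality argument appears in your outline, yet the specific shape of the eight formulas (in particular the presence of $\delta$ and the denominators $\beta_1(2/\beta_2)s_7$, $2s_2s_7$, etc.) depends on all three. Finally, your reading of $\delta\in\{1,3\}$ as recording which of $c,c+2,c+4$ is divisible by $3$ is unsupported and does not match the paper: $\delta=v_{1,4}-w_{1,4}$ is one of the two complementary positive factors of $3=(u_{4,1}-u_{1,4})(v_{1,4}-w_{1,4})$ arising from the transfer data between the first and fourth pairs (Proposition \ref{quadprop}), and that identification itself requires a computation using the already-derived relations.
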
	

Theorems \ref{Buchtripthm} and \ref{Buchquadthm} are crucial to show the non-existence of non-trivial B\"{u}chi quintuples. In particular, the same argument used to prove Theorem \ref{Buchquadthm} is also used to prove the so-called \emph{calibration lemmas}, namely Lemmas \ref{b4cal}, \ref{b2cal}, and \ref{b3cal}.

	\subsection{Outline of proof of Theorem \ref{MT0}} In this subsection we describe our strategy to proving Theorem \ref{MT0}. \\
	
	We begin by considering a slightly more general problem: in Proposition \ref{mainprop} we show that to obtain a non-trivial sequence of squares with finite second difference equal to $2$ is tantamount to showing that a certain monic quadratic polynomial takes on square values at consecutive integer inputs. This compels Definition \ref{buchim}, which gives our notion of an \emph{$n$-tuple of B\"{u}chi pairs}. \\
	
	Next, we go step-by-step towards our goal by understanding triples and quadruples of B\"{u}chi pairs. We introduce a bijection $\sigma_n$ between $\bF_2^n$ and $\{0, 1, \cdots, 2^n-1\}$ given by (\ref{sign}). This allows us to introduce the notion of a \emph{parametrizing sequence} of an $n$-tuple of B\"{u}chi pairs; see (\ref{paratup}). We note that a \emph{non-trivial} $n$-tuple of B\"{u}chi pairs must necessarily have a parametrizing sequence consisting of only \emph{positive} entries; see Proposition \ref{posseq}.  \\   
		
We already know that there are infinitely many non-trivial B\"{u}chi quadruples thanks to the work of Hensley and more recently Vidaux \cite{Vid}, so we know \emph{a priori} to expect plenty of quadruples of B\"{u}chi pairs. However, Definition \ref{buchim} provides us with sufficient structure that we can \emph{parametrize} all triples. The statement is given as Proposition \ref{tri-p}. \\
	
	We then use the crucial observation that a quadruple of B\"{u}chi pairs is obtained by weaving two triples together, with each triple given by a parametrization supplied by Proposition \ref{tri-p}. This allows us to obtain substantial control over the structure of the quadruples. In particular, the parametrizing sequence of a non-trivial quadruple of B\"{u}chi pairs must satisfy the so-called \emph{Pfaffian equation}, namely (\ref{pfaffeq}).   \\
	
	Finally, to address the non-existence of non-trivial quintuples of B\"{u}chi pairs, it suffices to show that there is no parametrizing sequence of a quintuple of B\"{u}chi pairs consisting of only positive entries. Any such sequence would have to satisfy \emph{five} different Pfaffian equations, namely equations (\ref{pfaffeqB5}), (\ref{pfaffeqB1}), (\ref{pfaffeqB4}), (\ref{pfaffeqB2}), and (\ref{pfaffeqB3}). This imposes an incredible strain on the potential parametrizing sequence $\Bs$. Indeed, these equations lead to the so-called \emph{structural equations}, given in Proposition \ref{quinstruc}. \\
	
The system given by Proposition \ref{quinstruc} is a system of ten bilinear equations in the variables $t_j = s_j s_{31 - j}, 1 \leq j \leq 15$. The two sets of variables are 
\[S_\heartsuit = \{t_1, t_2, t_4, t_8, t_{15}\} \quad \text{and} \quad S_\spadesuit = \{t_3, t_5, t_6, t_7, t_9, t_{10}, t_{11}, t_{12}, t_{13}, t_{14}\}.\]
By treating the variables in $S_\heartsuit$ as fixed and viewing (\ref{pfaffquin}) as a linear system in $S_\spadesuit$, we obtain a $10 \times 10$ matrix $M$ given by (\ref{pfaff-fin0}). This matrix is always singular, but if it has rank equal to $9$, then the unique (up to scalar multiplication) null-vector will not lead to a non-trivial parametrizing sequence, because some of the entries will be zero. Thus we require the rank to drop below $9$, which necessitates a relation on some of the so-called \emph{structural constants} (see Proposition \ref{b1b3}). \\

Viewing $S_{\spadesuit}$ as fixed, then (\ref{pfaffquin}) gives a $10 \times 5$ linear system, determined by the matrix $\M$ in (\ref{pfaff-fin}). For a non-trivial parametrizing tuple to exist, $\M$ must not be full rank. This would require every $5 \times 5$ sub-matrix of $\M$ to have vanishing determinant. \\

There are $\binom{10}{5} = 252$ such sub-determinants, and many of them give seemingly no information. However, miraculously, many of the sub-determinants of $\M$ factor into linear and bilinear terms in the $t_j$'s. Studying a relatively small subset of the sub-determinants, found by ad hoc methods, allows one to deduce that the price to pay for a non-trivial parametrizing sequence for a quintuple of B\"{u}chi pairs is too high, and none is allowed to exist. This is carried out in the last three sections of the paper.

	\subsection*{Acknowledgements} We thank H.~Pasten for introducing this problem to the author at the meeting \emph{Modern Breakthroughs in Diophantine Problems}, hosted by the Banff International Research Station. We thank the organizers M.~Bennett, N.~Bruin, S.~Siksek, and B.~Viray for organizing the meeting. We thank N.~Bruin, F.~\c{C}i\c{c}ek, M.~El Smaily, A.~Hamieh, F.~Jaillet, D.~McKinnon, C.~L.~Stewart, and X.~Vidaux for comments and helpful discussions.

	\section{Squares in (monic) quadratic progression and B\"{u}chi tuples} 
	\label{monic} 
	
	In this section we give a refined formulation of B\"{u}chi's problem. We will show that every B\"{u}chi $n$-tuple corresponds to squares in certain \emph{quadratic progressions}. 
	
	\begin{definition} A sequence $(a_n)_{n \geq 1}$ is called a \emph{quadratic progression} if the second difference of the sequence is constant. 
	\end{definition}
	
	A non-trivial B\"{u}chi $n$-tuple is then a sequence of \emph{squares} in quadratic progression with constant second difference equal to $2$, which does not consist of consecutive squares. \\
	
	We have the following basic lemma which characterizes quadratic progressions:
	
	\begin{lemma} \label{quadprog} A sequence $(a_n)_{n \geq 1}$ is a quadratic progression if and only if there exists a quadratic polynomial $f$ such that $f(n) = a_n$ for all $n \geq 1$. 
	\end{lemma}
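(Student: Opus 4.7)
The plan is to prove the two implications separately by direct, elementary calculation, with the harder direction handled by finite induction.

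For the ``if'' direction, I would take an arbitrary quadratic polynomial $f(x) = Ax^2 + Bx + C$ and compute the second difference of the sequence $a_n := f(n)$ directly:
\[
a_{n+2} - 2 a_{n+1} + a_n = A\bigl((n+2)^2 - 2(n+1)^2 + n^2\bigr) + B\bigl((n+2) - 2(n+1) + n\bigr) = 2A,
\]
which is a constant independent of $n$. Thus $(a_n)$ is a quadratic progression. (If $f$ has degree at most $1$, the same calculation gives second difference $0$, which is still constant.)

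For the ``only if'' direction, assume $(a_n)_{n \geq 1}$ has constant second difference $c$, so that
\[
a_{n+2} = 2 a_{n+1} - a_n + c \qquad \text{for every } n \geq 1.
\]
I would then define the quadratic polynomial
\[
f(x) := a_1 + (a_2 - a_1)(x - 1) + \tfrac{c}{2}(x-1)(x-2),
\]
which is manifestly a polynomial of degree at most $2$ satisfying $f(1) = a_1$ and $f(2) = a_2$. By the ``if'' direction, $f(n)$ has constant second difference equal to $2 \cdot \tfrac{c}{2} = c$. Hence $b_n := f(n)$ and $a_n$ satisfy the same second-order linear recurrence $b_{n+2} = 2 b_{n+1} - b_n + c$ with matching initial values $b_1 = a_1, b_2 = a_2$, so a routine induction on $n$ yields $f(n) = a_n$ for all $n \geq 1$.

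There is no serious obstacle here: both directions are short computations, and the only subtlety is choosing $f$ so that its leading coefficient $c/2$ produces the prescribed second difference. The explicit Newton forward-difference form above makes this transparent and sidesteps the need to invoke Lagrange interpolation.
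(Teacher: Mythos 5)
Your proof is correct. It differs mildly from the paper's route: the paper observes that the constant-second-difference condition $a_{n+2} - 2a_{n+1} + a_n = 2a$ is a non-homogeneous linear recurrence with characteristic polynomial $(x-1)^2$, exhibits $an^2$ as a particular solution and $bn+c$ as the general homogeneous solution, and concludes that the general solution is the quadratic polynomial $an^2+bn+c$ -- both implications are absorbed into this structure-of-solutions statement. You instead verify the ``if'' direction by the direct second-difference computation and handle the ``only if'' direction by writing down the interpolating polynomial explicitly in Newton forward-difference form and propagating agreement by induction on the shared recurrence. Your version is more self-contained (it never quotes the solution theory of linear recurrences) and makes the matching of initial conditions completely explicit, at the cost of a few more lines; the paper's version is shorter but leans on a standard citation-level fact. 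Both proofs share the same harmless looseness about degree: when the second difference is $0$ the resulting $f$ has degree at most $1$ rather than being genuinely quadratic, which you at least flag explicitly and which is immaterial for how the lemma is used.
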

	
	\begin{proof} Recall that a quadratic progression is characterized by the second difference equation 
	\begin{equation} \label{secdiff} a_{n+2} - 2 a_{n+1} + a_n = 2a \quad \text{for some} \quad a.
	\end{equation}
	This defines a non-homogeneous linear recurrence with characteristic polynomial $g(x) = (x-1)^2$. A particular solution to this recurrence is given by 
	\[a_n = an^2 \quad \text{for all} \quad n \geq 1.\]
	The homogeneous part is given by a linear term of the shape $bn + c$. Thus the general solution is given by 
	\[a_n = an^2 + bn + c = f(n),\]
	where the right hand side is a quadratic polynomial in $n$. 
	\end{proof} 
	
By Lemma \ref{quadprog}, we may give the following definition: 

\begin{definition} \label{monquad} We say a quadratic progression $(a_n)_{n \geq 0}$ is \emph{monic} if it is defined by a monic quadratic polynomial, or equivalently, if the constant second difference is equal to $2$. We say that the quadratic progression is \emph{non-singular} if the associated quadratic polynomial has non-zero discriminant. For a non-singular quadratic progression, the \emph{discriminant} of the quadratic progression is the same as that of the associated quadratic polynomial. 
\end{definition}

We now relate B\"{u}chi tuples to quadratic progressions via the following proposition: 

\begin{proposition} Suppose $u_1, \cdots, u_n$ is a B\"{u}chi $n$-tuple. Then $u_1^2, \cdots, u_n^2$ is a non-singular monic quadratic progression of positive discriminant divisible by $16$. 
\end{proposition}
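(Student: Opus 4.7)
The plan is to exploit Lemma~\ref{quadprog} to write $u_k^2 = f(k)$ for a quadratic polynomial $f$, whose leading coefficient is forced to equal $1$ by the hypothesis that the second difference is $2$; integrality of $f(1), f(2)$ then forces $f(x) = x^2 + bx + c$ with $b, c \in \bZ$. Using $f(1) = u_1^2$ and $f(2) = u_2^2$, I would solve $b = u_2^2 - u_1^2 - 3$ and $c = 2u_1^2 - u_2^2 + 2$, whence a direct computation yields the pleasant factorization
\[ \Delta := b^2 - 4c = (u_1^2 + u_2^2 - 1)^2 - (2 u_1 u_2)^2 = (u_2 - u_1 - 1)(u_2 - u_1 + 1)(u_1 + u_2 - 1)(u_1 + u_2 + 1).\]

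With this formula in hand, the divisibility $16 \mid \Delta$ reduces to a parity analysis of $u_1$ and $u_2$. Using the defining second-difference equation $u_3^2 = 2u_2^2 - u_1^2 + 2$ reduced modulo $8$, and the fact that squares modulo $8$ lie in $\{0,1,4\}$, one readily checks that $u_1, u_2$ cannot both be even (else $u_3^2 \equiv 2$ or $6 \pmod 8$) nor both be odd (else $u_3^2 \equiv 3 \pmod 8$). Hence $u_1$ and $u_2$ have opposite parities, making $u_2 - u_1$ and $u_1 + u_2$ both odd. Then each of the pairs $(u_2 - u_1 - 1, u_2 - u_1 + 1)$ and $(u_1 + u_2 - 1, u_1 + u_2 + 1)$ consists of two consecutive even integers, so in each pair one factor is divisible by $4$ and the other by $2$, contributing a factor of $8$. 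This delivers $64 \mid \Delta$, and in particular $16 \mid \Delta$.

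For positivity and non-singularity of $\Delta$, observe that $u_1, u_2 \geq 1$ with $u_2 > u_1$ immediately give $u_1 + u_2 \pm 1 > 0$, while the odd integer $u_2 - u_1 \geq 1$. The discriminant vanishes precisely when $u_2 - u_1 = 1$, which corresponds to the trivial B\"uchi tuple of consecutive integers $u_k = u_1 + (k-1)$; for any non-trivial tuple (which is the implicit scope of the proposition, since these are the cases of interest for Theorem~\ref{MT0}), one has $u_2 - u_1 \geq 3$, so all four factors of $\Delta$ are positive and $\Delta > 0$. The main hurdle is the modulo-$8$ parity argument, although it amounts to a short finite check; the remaining divisibility and positivity conclusions follow routinely from the explicit factorization of $\Delta$.
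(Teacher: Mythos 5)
Your proof is correct, and while it shares the paper's overall strategy --- express $u_k^2$ as the values of a monic integer quadratic $f$ determined by $u_1,u_2$, then reduce everything to the parity of $u_1,u_2$ --- the execution diverges in an instructive way. The paper completes the square, writes $f(k)=(k+a)^2+b-a^2$ with $2a=u_2^2-u_1^2-1$, $b=u_1^2$, asserts the opposite-parity of $u_1,u_2$ as "clear," and then checks $4\mid b-a^2$ by a two-case computation mod $4$; it never factors the discriminant and, notably, never addresses the positivity and non-singularity claims in the statement at all. Your route buys three things: the factorization $\Delta=(u_2-u_1-1)(u_2-u_1+1)(u_1+u_2-1)(u_1+u_2+1)$ makes the $2$-adic bookkeeping transparent (two pairs of consecutive even integers, so in fact $64\mid\Delta$, stronger than the stated $16\mid\Delta$); the mod-$8$ argument via $u_3^2=2u_2^2-u_1^2+2$ supplies the justification the paper omits for the parity claim (and makes visible that $n\ge 3$ is being used); and the same factorization settles positivity and non-singularity, which is genuinely missing from the paper's proof. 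You are also right to flag that the statement as written fails for trivial tuples --- consecutive squares give $\Delta=0$ --- so non-triviality must be read into the hypothesis, exactly as the paper's surrounding discussion (and its later use of the proposition) intends; your identification of $u_2-u_1=1$ as precisely the degenerate case, forcing $u_2-u_1\ge 3$ otherwise since $u_2-u_1$ is odd, closes that gap cleanly.
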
 

\begin{proof}
	
	We write out the B\"{u}chi system
	\begin{equation} \label{orgsys} \begin{matrix} u_3^2 & - & 2 u_2^2 & + & u_1^2 & = & 2 \\ \vdots & & \vdots & & \ddots &  &  \vdots \\ x_{n}^2 & - & 2 u_{n-1}^2 & + & u_{n-2}^2 & = & 2.\end{matrix} 
	\end{equation}
	Note that (\ref{orgsys}), without the condition that the terms are square integers, defines a linear recurrence sequence $(a_n)_{n \geq 1}$ of order $2$ with characteristic polynomial $(t-1)^2$. Thus the general solution is given by 
	\[a_n = n^2 + \alpha n + \beta\]
	with $\alpha, \beta$ determined by initial conditions. Since we demand that $a_2 = u_2^2, a_1 = u_1^2$, we obtain the expression of $a_k = u_k^2$ as a quadratic polynomial: 
	\begin{equation} \label{quadpoly} u_k^2 = k(k-1) + k(u_2^2 - u_1^2) + u_1^2 = k^2 + k(u_2^2 - u_1^2 - 1) + u_1^2.
	\end{equation}
	For fixed $u_2, u_1$, the right hand side is a quadratic polynomial in $k$. It is clear that $u_2 \equiv u_1 + 1 \pmod{2}$, whence $2a = u_2^2 - u_1^2 - 1$ is even. We can then write the right hand side as 
	\[f(k) = k^2 + 2ak + b,\]
	where $b = u_1^2$. Then completing the square we have 
	\[f(k) = (k+a)^2 + b - a^2.\]
	The equation $u_k^2 = f(k)$ can then be expressed as 
	\[(u_k - k - a)(u_k + k + a) = b - a^2.\]

	In fact, we have that in the relevant case that $b - a^2 \equiv 0 \pmod{4}$. Indeed, recall that from (\ref{quadpoly}) that $b = u_1^2$ and $a = (u_2^2 - u_1^2 - 1)/2$. Clearly, $u_2, u_1$ have opposite parity. If $u_1$ is odd and $u_2$ is even, then $u_2^2 - u_1^2 - 1 \equiv 2 \pmod{4}$, hence $(u_2^2 - u_1^2 - 1)/2 = a$ is odd and hence $b - a^2$ is divisible by $4$. If $u_1$ is even and $u_2$ is odd, then $u_2^2 - u_1^2 - 1 \equiv 0 \pmod{4}$, and we again conclude that $b - a^2$ is divisible by $4$. \end{proof}
		
Our main proposition, which implies our main theorems, is the following:

\begin{proposition} \label{mainprop} Let $(a_n)_{n \geq 1}$ be a monic quadratic progression with positive discriminant divisible by $16$. If 
\[a_{s+1}, \cdots, a_{s+k}\]
are all integer squares, then $k \leq 4$. 
\end{proposition}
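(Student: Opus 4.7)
The plan is to reformulate the statement via factorizations of the discriminant. Writing the progression as $f(x) = (x+a)^2 - D$ with $D > 0$ and $4 \mid D$, the condition $f(x) = y^2$ becomes $(x+a-y)(x+a+y) = D$. The two factors share parity, and since their product is a multiple of $4$ both must be even; writing them as $2p$ and $2q$ yields $x+a = p+q$ with $pq = \Delta := D/4$. Hence $f(x)$ is a perfect square if and only if $x+a$ lies in
\[
S_\Delta \;:=\; \{p+q : p, q \in \bZ,\ pq = \Delta\},
\]
and the proposition reduces to showing that $S_\Delta$ contains at most four consecutive integers.

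I would then follow the scheme laid out in Section~\ref{Intro} and introduce the notion of a \emph{B\"uchi pair} $(u_k, v_k)$ recording the sum $p+q$ together with the factor pair $(p,q)$. An $n$-tuple of such pairs at consecutive integer values of $x+a$ encodes $n$ consecutive elements of $S_\Delta$. Using the bijection $\sigma_n : \bF_2^n \to \{0, \ldots, 2^n - 1\}$, each tuple is encoded multiplicatively by a \emph{parametrizing sequence} $\Bs$ of non-negative integers, and non-triviality forces every entry of $\Bs$ to be strictly positive. The argument then proceeds by bootstrapping: first, give an explicit polynomial parametrization of all triples of B\"uchi pairs (Proposition~\ref{tri-p}); second, observe that every quadruple is a ``weave'' of two compatible triples, so comparing the two parametrizations imposes a single \emph{Pfaffian equation} on $\Bs$; third, a hypothetical quintuple contains five sub-quadruples (one for each deleted position), whose parametrizing sequences must simultaneously satisfy five Pfaffian equations.

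The decisive step is to show that these five Pfaffian equations admit no positive common solution. I would group the entries of $\Bs$ into products $t_j := s_j s_{31-j}$ for $1 \leq j \leq 15$, and split these into a set $S_\heartsuit$ of size five and a set $S_\spadesuit$ of size ten. Viewed as linear in $S_\spadesuit$, the system has a $10 \times 10$ coefficient matrix $M$ whose rank must drop strictly below $9$ (otherwise the essentially unique null-vector has a zero entry), which itself forces an algebraic constraint on the ``structural constants''. Viewed as linear in $S_\heartsuit$, the same system has a $10 \times 5$ coefficient matrix $\M$ all of whose $\binom{10}{5} = 252$ maximal minors must vanish. The main obstacle will be the analysis of these minors: most are uninformative, but many factor into products of linear and bilinear expressions in the $t_j$, and the plan is to identify, by careful case analysis, a small collection of such factored minors whose simultaneous vanishing, together with the rank-drop relation forced by $M$, implies that some $s_j$ must be zero — contradicting positivity and precluding any non-trivial quintuple of B\"uchi pairs.
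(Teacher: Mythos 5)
Your proposal is correct and follows essentially the same route as the paper: the reduction of the square condition to consecutive sums $p+q$ over factorizations $pq = D/4$ is exactly the paper's passage to $n$-tuples of B\"uchi pairs (Definition \ref{buchim} and Proposition \ref{tprop1}), and the subsequent outline --- positive parametrizing sequences via $\sigma_n$, the triple parametrization of Proposition \ref{tri-p}, the Pfaffian equation for quadruples, the five Pfaffian equations for a quintuple, the variables $t_j = s_j s_{31-j}$ split into $S_\heartsuit$ and $S_\spadesuit$, and the rank analysis of $M$ and $\M$ --- mirrors Sections \ref{tripsec}--\ref{Typ2}. The only caveat is that the decisive computations (the calibration lemmas and the explicit sub-determinants driving the Type I/Type II case analysis) are announced rather than carried out, but the strategy you describe is the one the paper executes.
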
 	

The discussion preceding Proposition \ref{mainprop} shows that every non-trivial B\"{u}chi sequence corresponds to such a quadratic progression. Thus, knowing that there are at most four squares among consecutive terms in such a progression shows that B\"{u}chi's problem has a positive answer. 

\begin{remark} We note that in \cite{Pin} it is observed that there exist squares in quadratic progression of length up to $8$. This does not violate the conclusion of Proposition \ref{mainprop}, since we insist that our quadratic progressions are monic. In fact, there is a qualitative difference between quadratic progressions where the second difference is equal to twice a square, versus those which are not. This roughly corresponds to the fact that there are only finitely many integer solutions to the equation $x^2 - (ky)^2 = 1$ but infinitely many solutions to the Pellian equation $x^2 - ky^2 = 1$ when $k \geq 2$ is not a square. 
\end{remark} 

\section{From monic quadratic progressions to tuples of B\"{u}chi pairs}
\label{buchpairs} 
	
We start with the set-up to prove Proposition \ref{mainprop}. Since we assumed that the monic quadratic polynomial $f(x)$ has discriminant equal to $16D$ for some positive integer $D$, it follows that after an integer translation, we may assume that $f$ takes the form $f(x) = x^2 - 4D$. If $f(s+1), \cdots, f(s+\ell)$ are all squares, say $f(s+j) = u_j^2$, then we put
	
	\begin{equation} \label{buchseq} u_k^2 = (s+k)^2 -4 D \quad \text{for} \quad k = 1, 2, \cdots, \ell.\end{equation} 
	This in turn implies that 
	\[4D = (s + k - u_k)(s+k + u_k) \quad \text{for} \quad k = 1, \cdots, \ell.\]
	Writing $x_k = s + k + u_k$ and $y_k = s + k - u_k$, we obtain
	\begin{equation} \label{Buchituple} 4D = x_k y_k, \quad x_k + y_k = x_{k-1} + y_{k-1} + 2
 	\end{equation} 
	for $k = 1, \cdots, \ell$.\\
	
	Note that $x_k + y_k = 2(s+k)$. From (\ref{Buchituple}) we see that $x_k \equiv y_k \equiv 0 \pmod{2}$. By abuse of notation we replace $x_k, y_k$ with $2x_k, 2y_k$ respectively. Then (\ref{Buchituple}) becomes
	\begin{equation} \label{Buchituple2} D = x_k y_k, \quad (x_k + y_k) - (x_{k-1} + y_{k-1}) = 1.
	\end{equation}
	
	We make the following definition, which drops the dependence on $D$:
	
	\begin{definition}[$n$-tuple of B\"{u}chi pairs] \label{buchim} We say that $\B_n = \{(x_i, y_i) : 1 \leq i \leq n\}$ is an \emph{$n$-tuple of B\"{u}chi pairs} if
	\begin{equation} \label{Buchituple3} 0 \leq x_i y_i = x_{i+1} y_{i+1} \quad \text{and} \quad (x_{i+1} + y_{i+1}) - (x_i + y_i) = 1
	\end{equation}
	for $i = 1, \cdots, n-1$. If $x_1 y_1 = 0$ then we say the tuple is \emph{trivial}, and \emph{non-trivial} otherwise. 
	\end{definition}
	
We have the following statement characterizing monic quadratic progressions and tuples of B\"{u}chi pairs:

\begin{proposition} \label{tprop1} There is a bijection between monic quadratic progressions of length $n$ with discriminant divisible by $16$ and $n$-tuples of B\"{u}chi pairs.
\end{proposition}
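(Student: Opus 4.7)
The plan is to exhibit explicit, mutually inverse maps between the two sets, essentially formalizing the computation carried out at the start of this section. The forward map normalizes the quadratic polynomial and then extracts a B\"{u}chi pair from each resulting factorization, while the reverse map assembles the polynomial data back out of a tuple of B\"{u}chi pairs.

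For the forward direction, I would first invoke Lemma \ref{quadprog} to write the progression as $a_i = f(i)$ for a monic quadratic $f$. Completing the square and using that the discriminant of $f$ equals $16D$ reduces $f$, by an \emph{integer} translation, to the normal form $f(x) = x^2 - 4D$, with the progression now evaluated at consecutive inputs $s+1, \ldots, s+n$ for a shift $s \in \bZ$. Writing each value as $a_j = u_j^2 = (s+j)^2 - 4D$ yields the factorization $4D = (s+j+u_j)(s+j-u_j)$; a parity check shows both factors are even (their sum is $2(s+j)$ and their product is $4D$), so that one can set
\[ x_j := \frac{(s+j) + u_j}{2}, \qquad y_j := \frac{(s+j) - u_j}{2}, \]
after which $x_j y_j = D$ and $(x_{j+1}+y_{j+1}) - (x_j + y_j) = 1$ are immediate, verifying the B\"{u}chi pair conditions (\ref{Buchituple3}).

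For the reverse direction, given $\{(x_i, y_i)\}_{i=1}^n$ satisfying (\ref{Buchituple3}), I would set $D := x_1 y_1$ and observe that $x_i + y_i$ is an arithmetic progression of common difference $1$; writing $s + i := x_i + y_i$, the quantities $a_i := (x_i - y_i)^2 = (s+i)^2 - 4D$ form a monic quadratic progression of length $n$ with discriminant $16D$, whose terms are in particular squares. The two constructions are manifestly inverse up to the sign ambiguity $u_j = \pm(x_j - y_j)$, which is pinned down by an ordering convention on each pair (for instance $x_j \geq y_j$, corresponding to $u_j \geq 0$). The main delicate point is the parity--integrality step in the forward direction: the hypothesis that the discriminant is divisible by $16$, rather than merely by $4$, is precisely what is needed both to reach the normal form $f(x) = x^2 - 4D$ via an integer translation and to ensure that the halving in the definition of $(x_j, y_j)$ actually produces integers.
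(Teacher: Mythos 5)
Your proposal is correct and follows essentially the same route as the paper: both directions use the identical explicit maps $u_j \mapsto \bigl(\tfrac{(s+j)+u_j}{2}, \tfrac{(s+j)-u_j}{2}\bigr)$ and $(x_j,y_j) \mapsto x_j - y_j$ with $s+1 = x_1+y_1$, exactly as in the paper's proof and the normalization discussion preceding it. Your additional attention to the parity/integrality step and the sign convention only makes explicit details the paper leaves implicit.
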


\begin{proof} A length $n$ monic quadratic progression with discriminant divisible by $16$ is given by 
\[u_j^2 = (s+j)^2 - 4D, \quad 1 \leq j \leq n\]
for integers $s, D$. This gives an $n$-tuple of B\"{u}chi pairs by the map $u_j \mapsto (x_j, y_j)$, given as
\[x_j = \frac{s + j + u_j}{2}, \quad y_j = \frac{s + j  - u_j}{2} \quad \text{for} \quad 1 \leq j \leq n\]
Conversely, an $n$-tuple of B\"{u}chi pairs $\{(x_j, y_j) : 1 \leq j \leq n\}$ gives a monic quadratic progression by 
\[u_j = x_j - y_j, \quad s + 1 = x_1 + y_1.\]
This gives the required bijective correspondence. \end{proof} 	

It follows that Proposition \ref{mainprop} is implied by: 

\begin{proposition} \label{mainprop2} Let $\B_n = \{(x_j, y_j) : 1 \leq j \leq n\}$ be a non-trivial $n$-tuple of B\"{u}chi pairs. Then $n \leq 4$. 
\end{proposition}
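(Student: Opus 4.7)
The plan is to attack Proposition \ref{mainprop2} via a systematic multiplicative parametrization of B\"{u}chi pairs, then exploit the rigidity that forces on longer tuples.

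\textbf{Step 1: Multiplicative parametrization.} For a non-trivial tuple, the common product $D = x_j y_j$ is a fixed positive integer (after normalization), so each pair $(x_j, y_j)$ corresponds to a factorization of $D$. I would set up, for each $n$, a binary-indexed family of parameters $s_0, s_1, \ldots, s_{2^n - 1}$ (via the map $\sigma_n: \bF_2^n \to \{0, \dots, 2^n - 1\}$ announced in the outline), and write
\[
x_j \;=\; \prod_{\sigma_n(k)_j = 0} s_k, \qquad y_j \;=\; \prod_{\sigma_n(k)_j = 1} s_k,
\]
which automatically enforces $x_j y_j = \prod_k s_k = D$. The condition $(x_{j+1}+y_{j+1})-(x_j+y_j)=1$ from (\ref{Buchituple3}) then translates into a single polynomial identity in the $s_k$'s for each consecutive pair $(j,j+1)$. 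I would show that non-triviality forces the existence of a \emph{positive} parametrizing sequence (cf.\ the promised Proposition \ref{posseq}): any sign cancellation collapses the tuple to the trivial one.

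\textbf{Step 2: Triples and quadruples as a base.} For $n=3$, there are few enough parameters ($s_0,\dots,s_7$) that the system can be solved completely; the outcome is exactly Theorem \ref{Buchtripthm}. For $n=4$ I would use the key observation that a quadruple of B\"{u}chi pairs consists of two overlapping triples glued along a common pair, so the quadruple parametrization is obtained by weaving together two parametrizations from Proposition \ref{tri-p}. The compatibility of these two parametrizations is a single bilinear identity — the Pfaffian equation (\ref{pfaffeq}) — and its solution set yields Theorem \ref{Buchquadthm}. These explicit formulas also furnish the calibration lemmas that describe which sub-tuples of a longer tuple must look like.

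\textbf{Step 3: Reduction of the quintuple problem to a rank condition.} Assume for contradiction that $\B_5$ is a non-trivial quintuple of B\"{u}chi pairs. Each of the five consecutive sub-quadruples contributes its own Pfaffian equation, giving five equations (\ref{pfaffeqB5}), (\ref{pfaffeqB1}), (\ref{pfaffeqB4}), (\ref{pfaffeqB2}), (\ref{pfaffeqB3}) in the sixteen parameters $s_0,\dots,s_{15}$ (indexed now by $\bF_2^5$ via $\sigma_5$; there are actually $32$ parameters, and the involution $k \mapsto 31-k$ shows they pair up into fifteen products $t_j = s_j s_{31-j}$). Expanding these equations yields the ten bilinear \emph{structural equations} of Proposition \ref{quinstruc}, with variable sets $S_\heartsuit = \{t_1,t_2,t_4,t_8,t_{15}\}$ and $S_\spadesuit$ of size ten. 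Fixing $S_\heartsuit$ and viewing the system as linear in $S_\spadesuit$ produces a $10\times 10$ matrix $M$; fixing $S_\spadesuit$ and viewing it as linear in $S_\heartsuit$ produces a $10\times 5$ matrix $\M$. Existence of a positive parametrizing sequence forces both $M$ to drop rank below $9$ (so the nullspace does not trivially kill some $s_k$) and every $5\times 5$ minor of $\M$ to vanish.

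\textbf{Step 4: The combinatorial extraction of a contradiction.} This is the hard part. A priori there are $\binom{10}{5}=252$ minors of $\M$ that must vanish, and a straightforward attempt to handle all of them at once is hopeless. The strategy I would follow — and which the introduction foreshadows — is to exploit the surprising fact that many of these minors factor into products of linear and bilinear terms in the $t_j$'s. One selects, by hand, a small collection of factorizable minors whose vanishing gives strong divisibility relations among the structural constants; combined with the rank-drop constraint on $M$ from Proposition \ref{b1b3}, these relations are mutually incompatible with the requirement that every $s_k>0$. This final step — identifying the right minors and carrying out the case analysis that eliminates every branch — is where essentially all the difficulty lies; the preceding steps are in comparison a bookkeeping framework that turns the Diophantine problem into a concrete (though delicate) elimination problem over $\bZ_{>0}$.
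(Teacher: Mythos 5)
Your outline reproduces the paper's strategy step for step---the positive parametrizing sequence (Proposition \ref{posseq}), the triple parametrization (Proposition \ref{tri-p}), the Pfaffian equation for quadruples, the five Pfaffian equations for the quintuple, the structural equations (Proposition \ref{quinstruc}), and the two matrices $M$ and $\M$---so there is no divergence of approach to report. The problem is that Step 4, which you yourself identify as the place ``where essentially all the difficulty lies,'' is not a proof but a declaration of intent. The paper's argument at that point is not a routine elimination: it requires (i) the computation of a specific $9\times 9$ minor of $M$ to force $\beta_1=\beta_3$ (Proposition \ref{b1b3}); (ii) the identification of particular factorizable minors $\D_{23468}$, $\D_{45679}$, $\D_{01589}$, $\D_{14589}$, $\D_{03579}$, $\D_{01579}$ of $\M$, leading to the dichotomy into Type I and Type II sequences (Lemma \ref{detlem3}); (iii) coprimality arguments exploiting which $s_j$'s lie on opposite sides of consecutive rows of (\ref{quin}) to pry apart bilinear relations into exact values such as $t_3=t_{12}=1$ and $t_6t_9=\delta_1$ (Lemmas \ref{gcdlem1} and \ref{gcdlem2}); and (iv) in each type, a further descent through rank-$4$ submatrices producing incompatible linear relations like $\delta_2 t_{11}=5t_5t_{13}$ against $\delta_2 t_{11}=t_5t_{13}$ or $4t_{11}=(3/\delta_2)t_5t_{13}$, which force some $t_j=0$. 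None of this is predictable from the framework alone, and without it the claim $n\le 4$ is not established.

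A secondary but genuine issue is in Step 3: the five sub-quadruples are obtained by deleting each of the five pairs, and the three obtained by deleting $(x_2,y_2)$, $(x_3,y_3)$, or $(x_4,y_4)$ are \emph{not} ordered quadruples in the sense of Definition \ref{ordered}---their consecutive differences are the structural constants $\beta_j$, $\delta_j$, $\gamma$ rather than $1$. Consequently Proposition \ref{tri-p} cannot be applied to them directly; the paper must rederive the analogous parametrizations from the modified relations (\ref{3diff2}) and (\ref{3diff3}) and then \emph{calibrate} each sub-quadruple against the quintuple (Lemmas \ref{b4cal}, \ref{b2cal}, \ref{b3cal}) before the structural equations can even be written down. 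Your phrase ``five consecutive sub-quadruples'' and the suggestion that the quadruple formulas ``furnish'' the calibration lemmas gloss over this, and without the calibration step the ten equations of Proposition \ref{quinstruc} do not exist in usable form.
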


The remainder of the paper is dedicated to proving Proposition \ref{mainprop2}. 

\subsection{Transfer relations for non-trivial tuples of B\"{u}chi pairs and ordered tuples} 
	
Let $\{(x_i, y_i) : 1 \leq i \leq n\}$ be a non-trivial $n$-tuple of B\"{u}chi pairs. The hypothesis that $x_i y_i = x_j y_j$ for all $1 \leq i, j \leq n$ implies that for each $1 \leq i, j \leq n$, we may write 
	\begin{equation} \label{gcddef}
	\begin{matrix} x_i & = & v_{i,j} u_{i,j}, & y_i  & = & w_{i,j} u_{j,i}, \\ \\
	x_j & = & v_{i,j} u_{j,i}, & y_j & = & w_{i,j} u_{i,j}
	\end{matrix}
	\end{equation}
with $u_{i,j}, u_{j,i}, v_{i,j}, w_{i,j} \in \bZ_{\geq 0}$. \\

Next we prove that for each $1 \leq j \leq n$, we have:
	
	\begin{lemma} \label{gcddecomp} Let $\{(x_i, y_i) : 1 \leq i \leq n\}$ be a non-trivial $n$-tuple of B\"{u}chi pairs. Then for each $1 \leq j \leq n-1$, we have 
	\[x_j = \gcd(x_j, x_{j+1}) \gcd(x_j, y_{j+1}) \quad \text{and} \quad y_j = \gcd(y_j, x_{j+1}) \gcd(y_j, y_{j+1}).\]
	Thus, 
	\[\begin{matrix} v_{j, j+1} & = & \gcd(x_j, x_{j+1}), & w_{j,j+1} &= & \gcd(y_j, y_{j+1}), \\ \\
	u_{j,j+1} & = & \gcd(x_j, y_{j+1}), & u_{j+1,j} & = & \gcd(y_j, x_{j+1}).
	\end{matrix}
	\]
	
	\end{lemma}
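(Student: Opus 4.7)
The plan is to prove the identity $x_j = \gcd(x_j, x_{j+1}) \gcd(x_j, y_{j+1})$; the companion identity $y_j = \gcd(y_j, x_{j+1}) \gcd(y_j, y_{j+1})$ is then proved by an entirely symmetric argument. Once both identities are in hand, the identification of $v_{j,j+1}, u_{j,j+1}, u_{j+1,j}, w_{j,j+1}$ with the four claimed gcds is a direct matching exercise: declaring $v_{j,j+1} := \gcd(x_j, x_{j+1})$, $u_{j,j+1} := \gcd(x_j, y_{j+1})$, $u_{j+1,j} := \gcd(y_j, x_{j+1})$, $w_{j,j+1} := \gcd(y_j, y_{j+1})$ realizes all four relations of (\ref{gcddef}), since the first two come from the two identities above and the remaining two come from the same identities applied with the roles of $j$ and $j+1$ swapped.

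The first real step is to extract the crucial coprimality consequence of the B\"{u}chi condition. The sum relation $(x_{j+1} + y_{j+1}) - (x_j + y_j) = 1$ gives $\gcd(x_j + y_j, x_{j+1} + y_{j+1}) = 1$, since two integers differing by $1$ are coprime. From this it follows immediately that no prime $p$ can divide both $\gcd(x_j, y_j)$ and $\gcd(x_{j+1}, y_{j+1})$, since such a prime would divide both sums $x_j + y_j$ and $x_{j+1} + y_{j+1}$. This is the only place where the additive structure of the B\"{u}chi relation enters.

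With this in hand, I would prove the first identity by comparing $p$-adic valuations prime by prime. Fix a prime $p$ and put $a = v_p(x_j), d = v_p(y_j), b = v_p(x_{j+1}), c = v_p(y_{j+1})$. The product equation $x_j y_j = x_{j+1} y_{j+1}$ gives $a + d = b + c$, and the target identity is equivalent to $a = \min(a, b) + \min(a, c)$ for every prime $p$. The case $a = 0$ is trivial; if $d = 0$, then $b + c = a$ with $b, c \geq 0$ forces $b, c \leq a$ and hence $\min(a, b) + \min(a, c) = b + c = a$; and if $a, d \geq 1$, then the previous step forces $\min(b, c) = 0$, so without loss of generality $c = 0$, whence $b = a + d \geq a$ and $\min(a, b) + \min(a, c) = a + 0 = a$. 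The non-triviality assumption $x_j y_j \neq 0$ is used only to ensure all valuations are finite. The main obstacle is conceptual rather than computational: one must recognize that the innocuous additive relation between the sums is precisely what prevents a prime from being ``trapped'' in both $\gcd(x_j, y_j)$ and $\gcd(x_{j+1}, y_{j+1})$, which in turn is exactly what makes the multiplicative identity $x_j y_j = x_{j+1} y_{j+1}$ decompose cleanly into a product of gcds; after that, the case analysis is routine.
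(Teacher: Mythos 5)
Your proof is correct and follows essentially the same route as the paper's: the key step in both is that the relation $(x_{j+1}+y_{j+1})-(x_j+y_j)=1$ forbids a prime from dividing both $\gcd(x_j,y_j)$ and $\gcd(x_{j+1},y_{j+1})$, after which the identity is verified prime-by-prime via a short case analysis on the valuations constrained by $x_jy_j=x_{j+1}y_{j+1}$. Your organization of the cases (on $v_p(x_j)$ and $v_p(y_j)$ rather than on whether $v_p(x_j)$ is $0$, intermediate, or maximal) is a cosmetic difference only.
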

	
	\begin{proof} Let $p$ be a prime and $k \geq 1$ such that $p^k$ divides $x_1 y_1$ exactly. Let $p^{k_j}, p^{\ell_j}$ divide $x_j, y_j$ exactly, respectively. Since $x_j y_j = x_{j+1} y_{j+1} = x_1 y_1$, it follows that 
	\[k_j + \ell_j = k_{j+1} + \ell_{j+1} = k.\]
	We claim that $\min\{k_j, \ell_j, k_{j+1}, \ell_{j+1}\} = 0$. Otherwise, we have
	\[p | \gcd(x_j, y_j) \quad \text{and} \quad p | \gcd(x_{j+1}, y_{j+1}),\]
	hence $p$ divides $(x_{j+1} + y_{j+1}) - (x_j + y_j) = 1$, a contradiction. Now the $p$-adic order of $\gcd(x_j, x_{j+1}), \gcd(x_j, y_{j+1})$ are respectively equal to $\min\{k_j, k_{j+1}\}, \min\{k_j, \ell_{j+1}\}$. If $0 < k_j < k$, then one of $k_{j+1}, \ell_{j+1}$ must equal $0$ and the other equal to $k$, in which case $\min\{k_j, k_{j+1}\} + \min\{k_j, \ell_{j+1}\} = k_j$. If $k_j = 0$ then there is nothing to prove. If $k_j = k$ then $\min\{k_j, k_{j+1}\} + \min\{k_j, \ell_{j+1}\} = k_{j+1} + \ell_{j+1} = k$, so we are done. The proof for the statement concerning $y_j$ is identical. 
	\end{proof} 
	
	Lemma \ref{gcddecomp} gives
	\[x_j = v_{j,j+1} u_{j,j+1} \quad \text{and} \quad y_j = w_{j,j+1} u_{j+1,j}\]
	for $j \geq 1$. (\ref{Buchituple2}) then implies
	\begin{equation} \label{spliteq}  1 = v_{j,j+1}(u_{j+1,j} - u_{j,j+1}) + w_{j,j+1}(u_{j,j+1} - u_{j+1,j}) = (v_{j,j+1} - w_{j,j+1})(u_{j+1,j} - u_{j,j+1}).\end{equation}
	From here we see that 
	\begin{equation} \label{wusub} u_{j+1,j} = u_{j,j+1} + c_j \quad \text{and} \quad v_{j,j+1} = w_{j,j+1} + d_j
	\end{equation}
	with $c_j d_j = 1$. \\

	Observe that switching the roles of $x_{j+1}, y_{j+1}$ while fixing $x_j, y_j$ has the effect of switching the pairs $\{v_{j,j+1}, w_{j,j+1}\}$ and $\{u_{j,j+1}, u_{j+1,j}\}$. Thus we may assume, without loss of generality, that $c_j = 1$ and $d_j = 1$ for all $1 \leq j \leq n-1$. This compels the following definition:
	
	\begin{definition} \label{ordered} An $n$-tuple of B\"{u}chi pairs is called \emph{ordered} if $x_1 \geq y_1$ and 
	\[v_{j, j+1} - w_{j,j+1} = u_{j+1, j} - u_{j,j+1} = 1\]
	for $j = 1, \cdots, n-1$. 
	\end{definition}
	
We have the following simple consequence of requiring an $n$-tuple of B\"{u}chi pairs to be ordered:

\begin{lemma} \label{ordlem1} Let $\B_n = \{(x_i, y_i) : 1 \leq i \leq n\}$ be an ordered $n$-tuple of B\"{u}chi pairs. Then we have 
\[x_1 \leq x_2 \leq \cdots \leq x_n, \quad y_1 \geq y_2 \geq \cdots \geq y_n.\]
\end{lemma}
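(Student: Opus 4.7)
The plan is to read the inequalities directly off the parameter relations supplied by orderedness, with no additional input required. By Lemma \ref{gcddecomp}, for every $1 \le j \le n-1$ the consecutive pairs decompose as
\[
x_j = v_{j,j+1}\, u_{j,j+1}, \qquad x_{j+1} = v_{j,j+1}\, u_{j+1,j}, \qquad y_j = w_{j,j+1}\, u_{j+1,j}, \qquad y_{j+1} = w_{j,j+1}\, u_{j,j+1},
\]
where the four factors are non-negative (being greatest common divisors). The orderedness hypothesis from Definition \ref{ordered} supplies precisely the relations $u_{j+1,j} - u_{j,j+1} = 1$ and $v_{j,j+1} - w_{j,j+1} = 1$ for each such $j$.

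Substituting these into the previous displayed identities, I would compute
\[
x_{j+1} - x_j \;=\; v_{j,j+1}\bigl(u_{j+1,j} - u_{j,j+1}\bigr) \;=\; v_{j,j+1} \;\ge\; 0,
\]
and symmetrically
\[
y_j - y_{j+1} \;=\; w_{j,j+1}\bigl(u_{j+1,j} - u_{j,j+1}\bigr) \;=\; w_{j,j+1} \;\ge\; 0.
\]
Applying these two inequalities for $j = 1, 2, \ldots, n-1$ and chaining them yields both claimed monotonicity statements $x_1 \le x_2 \le \cdots \le x_n$ and $y_1 \ge y_2 \ge \cdots \ge y_n$.

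Since the argument is essentially an unpacking of the definitions fed through Lemma \ref{gcddecomp}, no real obstacle is expected. The only point worth flagging is that the condition $x_1 \ge y_1$ from Definition \ref{ordered} is not itself used in deriving the chains; it merely fixes the initial labeling within the pair $(x_1,y_1)$, while the positive-sign normalization $c_j = d_j = 1$ recorded in \eqref{wusub} is what actually drives the monotonicity. (A minor bookkeeping remark is that in the trivial case $x_1 y_1 = 0$, the convention $x_1 \ge y_1$ forces $y_1 = 0$, whence the $y$-chain collapses to $0 \ge 0 \ge \cdots$, and the same computation still gives the $x$-chain; thus the lemma holds without any non-triviality hypothesis.)
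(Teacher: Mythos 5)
Your proof is correct and follows essentially the same route as the paper: both use the decomposition $x_j = v_{j,j+1}u_{j,j+1}$, $x_{j+1} = v_{j,j+1}u_{j+1,j}$, $y_j = w_{j,j+1}u_{j+1,j}$, $y_{j+1} = w_{j,j+1}u_{j,j+1}$ from Lemma \ref{gcddecomp} together with the orderedness relations $u_{j+1,j}-u_{j,j+1} = v_{j,j+1}-w_{j,j+1} = 1$, the only cosmetic difference being that you factor the differences $x_{j+1}-x_j$ and $y_j - y_{j+1}$ while the paper substitutes $v_{j,j+1}=w_{j,j+1}+1$, $u_{j+1,j}=u_{j,j+1}+1$ and compares the resulting products directly. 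Your side remarks (that $x_1 \ge y_1$ is not needed for the monotonicity and that the trivial case causes no problem) are accurate.
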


\begin{proof} Using the relations 
\[v_{j,j+1} = w_{j,j+1} + 1 \quad \text{and} \quad u_{j+1,j} = u_{j,j+1} + 1\]
for $1 \leq j \leq n-1$, we obtain
\[\begin{matrix} x_j &  = & (w_{j,j+1} + 1)u_{j,j+1}, & & y_j & = & w_{j,j+1}(u_{j,j+1} + 1),  \\ \\ 
x_{j+1} & = & (w_{j,j+1} + 1)(u_{j,j+1} + 1), & & y_{j+1} & = & w_{j,j+1} u_{j,j+1}.
\end{matrix}\]
This implies that $x_{j+1} > x_j$ and $y_{j+1} \leq y_j$, as required. \end{proof}

\section{Parametrizing triples of B\"{u}chi pairs} 
\label{tripsec}

We start this section with a bit of generality. We introduce the bijection 
\begin{equation} \label{sign} \sigma_n : \bF_2^n \rightarrow \{0, 1, \cdots, 2^n - 1\}, \quad (\ep_0, \ep_1, \cdots, \ep_{n-1}) \mapsto \sum_{j=0}^{n-1} \ep_j 2^j. \end{equation}
Write a generic element in $\{0, \cdots, 2^n - 1\}$ in binary as 
\[\sum_{j=0}^{n-1} \ep_j 2^j, \quad \ep_j \in \{0,1\}.\]
Define the sets $S_n^{(j;0)}, S_n^{(j;1)}$ to be the subsets of $\{0, 1, \cdots, 2^n - 1\}$ consisting of those elements whose $j$-th bits are equal to $0$ and $1$, respectively. Now define the functions $\Pi_n^{(j;0)}, \Pi_n^{(j;1)} : \bR_{\geq 0}^{2^n} \rightarrow \bR$ by 
\begin{equation} \Pi_n^{(j; 0)} (\Bs) =  \prod_{k \in S_n^{(j;0)}} s_k \quad \text{and} \quad \Pi_n^{(j;1)}(\Bs) =  \prod_{k \in S_n^{(j;1)}} s_k.
\end{equation}

 Then define the \emph{parametrizing sequence} of a $n$-tuple of B\"{u}chi pairs by 
 
 \begin{definition}[Parametrizing sequence of $n$-tuples of B\"{u}chi pairs] \label{paraseq} Let $\B_n = \{(x_j, y_j) : 1 \leq j \leq n\}$ be an $n$-tuple of B\"{u}chi pairs. We say that $\Bs \in \bZ^{2^n}$ is a \emph{parametrizing sequence} for $\B_n$ if 
 
\begin{equation} \label{paratup} 
x_j = \Pi_{n}^{(j-1;0)}(\Bs) \quad \text{and} \quad y_j = \Pi_{n}^{(j-1;1)} (\Bs)
\end{equation}
for $j = 1, \cdots, n$. We say that the parametrizing sequence $\Bs$ is \emph{non-trivial} if the corresponding $n$-tuple of B\"{u}chi pairs is non-trivial.  \end{definition} 

We now prove that every non-trivial $n$-tuple of B\"{u}chi pairs has a non-trivial parametrizing sequence (in particular, all of the terms are positive). 

\begin{proposition}[Existence of positive parametrizing tuples for $n$-tuples of B\"{u}chi pairs] \label{posseq} Let $\B_n = \{(x_j, y_j) : 1 \leq j \leq n\}$ be a non-trivial $n$-tuple of B\"{u}chi pairs. Then there exists a parametrizing sequence $\Bs \in \bZ_{> 0}^{2^n}$ such that (\ref{paratup}) holds. 
\end{proposition}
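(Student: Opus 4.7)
The plan is to argue by induction on $n$, using a ``doubling'' construction to extend a parametrizing sequence of $\B_{n-1}$ to one of $\B_n$. The non-triviality hypothesis $x_j y_j = x_1 y_1 > 0$, together with the reduction to ordered tuples via Definition~\ref{ordered} and Lemma~\ref{ordlem1}, allows us to assume $x_j, y_j > 0$ throughout. The base case $n=1$ is immediate: set $s_0 = x_1$ and $s_1 = y_1$.

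For the inductive step, I would observe that the truncation $\B_{n-1} = \{(x_j, y_j) : 1 \leq j \leq n-1\}$ inherits the B\"{u}chi relations from $\B_n$ and hence is itself a non-trivial $(n-1)$-tuple of B\"{u}chi pairs. By the inductive hypothesis, there exists a parametrizing sequence $\Bt = (t_0, \ldots, t_{2^{n-1}-1}) \in \bZ_{>0}^{2^{n-1}}$ for $\B_{n-1}$. I would then look for $\Bs \in \bZ_{>0}^{2^n}$ of the form $s_k \cdot s_{k + 2^{n-1}} = t_k$ for $0 \leq k < 2^{n-1}$, subject to the two marginal constraints
\[
\prod_{k=0}^{2^{n-1}-1} s_k = x_n \qquad \text{and} \qquad \prod_{k=0}^{2^{n-1}-1} s_{k+2^{n-1}} = y_n.
\]
That any such splitting already parametrizes $\B_n$ rests on two elementary set-theoretic identities: for $j < n$ one has $S_n^{(j-1;0)} = S_{n-1}^{(j-1;0)} \sqcup (2^{n-1} + S_{n-1}^{(j-1;0)})$ (and analogously for superscript $(j-1;1)$), which reduces the $j < n$ cases of (\ref{paratup}) to the inductive hypothesis via $\Pi_n^{(j-1;0)}(\Bs) = \Pi_{n-1}^{(j-1;0)}(\Bt) = x_j$; while $S_n^{(n-1;0)} = \{0, \ldots, 2^{n-1}-1\}$ and $S_n^{(n-1;1)} = \{2^{n-1}, \ldots, 2^n-1\}$ turn the $j=n$ case of (\ref{paratup}) into precisely the two marginal constraints above.

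The only substantive step is producing the splitting. Consistency holds automatically since $\prod_k t_k = x_1 y_1 = x_n y_n$, so the prescribed marginal products multiply to the total. Working prime-by-prime, the problem reduces to the following standard distribution question: given non-negative exponents $\alpha_k = v_p(t_k)$ with $\sum_k \alpha_k = A + B$, where $A = v_p(x_n)$ and $B = v_p(y_n)$, find non-negative integers $\beta_k, \gamma_k$ with $\beta_k + \gamma_k = \alpha_k$, $\sum_k \beta_k = A$, and $\sum_k \gamma_k = B$. Any greedy assignment (e.g.\ $\beta_k = \min(\alpha_k,\, A - \sum_{j<k}\beta_j)$) succeeds, and multiplying over all primes yields positive integer values $s_k,\, s_{k+2^{n-1}} \geq 1$. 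I expect no real obstacle beyond the bookkeeping of bit-index decompositions; this is essentially an existential statement that prepares the stage for the deeper rigidity analyses in later sections.
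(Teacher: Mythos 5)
Your proposal is correct and follows essentially the same route as the paper: induction on $n$, truncation to $\B_{n-1}$, splitting each term $t_k$ of the inherited positive parametrizing sequence into two complementary positive divisors destined for $x_n$ and $y_n$, and verifying (\ref{paratup}) via the bit-index decomposition $S_n^{(j-1;\ep)} = S_{n-1}^{(j-1;\ep)} \sqcup \bigl(2^{n-1} + S_{n-1}^{(j-1;\ep)}\bigr)$ together with $S_n^{(n-1;0)} = \{0,\dots,2^{n-1}-1\}$. The only divergence is that you produce the splitting by an explicit prime-by-prime greedy distribution of exponents (which is, if anything, more rigorous than the paper's appeal to ``the divisor of $r_j$ transferred to $x_{n+1}$''), whereas the paper intends the canonical gcd-based transfer; this is immaterial for the stated existence claim, though the paper's canonical choice is the one whose gcd identities are exploited in later sections.
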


\begin{proof}  We proceed by induction. For the case $n = 2$, the answer is supplied by Lemma \ref{gcddecomp}: we put 
\[s_0 = \gcd(x_1, x_2), \quad s_1 = \gcd(y_1, x_2), \quad s_2 = \gcd(x_1, y_2), \quad s_3 = \gcd(y_1, y_2).\]
Suppose, for $n \geq 2$, that every non-trivial $n$-tuple of B\"{u}chi pairs has a non-zero parametrizing sequence. Now consider a $(n+1)$-tuple of B\"{u}chi pairs, say $\B_{n+1} = \{(x_j, y_j) : 1 \leq j \leq n+1\}$. Then $\B_n = \{(x_j, y_j) : 1 \leq j \leq n\}$ satisfies the induction hypothesis, and thus admits a positive parametrizing sequence, say $\Br \in \bZ_{>0}^{2^n}$. Define $\Bs \in \bZ^{2^{n+1}}$ by $r_j = s_j s_{j+2^n}$, where $s_j$ is the divisor of $r_j$ transferred to $x_{n+1}$, and $s_{j+2^n}$ the divisor transferred to $y_{n+1}$. Since $x_n y_n = x_{n+1} y_{n+1}$, it follows that $s_j, s_{j+2^n}$ are necessarily complementary divisors of $r_j$. \\

We show that as constructed, $\Bs$ is a parametrizing sequence for $\B_{n+1}$. Note that, by construction, $\Bs$ is positive. Observe that for $1 \leq j \leq n$, our assumption that $\Br$ is a positive parametrizing sequence for $\B_n$ implies that 
\[x_j = \Pi_n^{(j-1;0)} (\Br) \quad \text{and} \quad y_j = \Pi_n^{(j-1;1)} (\Br).\]
Using our construction, it follows that 
\[ x_j = \prod_{k \in S_n^{(j-1;0)}} s_k s_{k + 2^n} \quad \text{and} \quad y_j = \prod_{k \in S_n^{(j-1;1)}} s_k s_{k+2^n}.\]
By the definition of $S_n^{(j;\ep)}$, it follows that 
\begin{equation} x_j = \prod_{k \in S_{n+1}^{(j-1;0)}} s_k = \Pi_{n+1}^{(j-1;0)} (\Bs) \quad \text{and} \quad y_j = \prod_{k \in S_{n+1}^{(j-1;1)}} s_k = \Pi_{n+1}^{(j-1;1)}(\Bs)
\end{equation}
for $j = 1, \cdots, n$. For $j = n + 1$, we note that by construction, for each $0 \leq \ell \leq 2^n - 1$, we have $s_\ell$ is the component of $r_\ell$ which goes into $x_{n+1}$ and $s_{\ell + 2^n}$ is the component of $r_\ell$ which goes into $y_{n+1}$. It follows from the fact that $x_n y_n = x_{n+1} y_{n+1}$ that 
\begin{equation}
\begin{matrix}
 x_{n+1} & = & \prod_{0 \leq k \leq 2^n - 1} s_k & & y_{n+1} & = & \prod_{2^n \leq k \leq 2^{n+1} - 1} s_{k}  \\ \\
& = & \prod_{k \in S_{n+1}^{(n;0)}} s_k    & & & = & \prod_{k \in S_{n+1}^{(n;1)}} s_k  \\ \\
& = &  \Pi_{n+1}^{(n; 0)} (\Bs), & & & = & \Pi_{n+1}^{(n;1)} (\Bs). \end{matrix}
\end{equation}
This shows that $\Bs$ is a positive parametrizing sequence for $\B_{n+1}$, completing the induction. \end{proof}

\begin{remark} Positivity of the terms in a non-trivial parametrizing sequence will be crucial in our argument. This will be made apparent in Sections \ref{mainquin}, \ref{Typ1}, and \ref{Typ2}. 
\end{remark}

Moving on to deal with triples of B\"{u}chi pairs, we obtain the $8$ variables $s_0, \cdots, s_7$ of the parametrizing tuple given by (\ref{paratup}). With respect to this parametrization, we write our ordered B\"{u}chi triple as: 
\begin{equation} \label{btrips} \begin{matrix} x_1 & = & s_0 s_2 s_4 s_6, & y_1 & = & s_1 s_3 s_5 s_7, \\
x_2 & = & s_0 s_1 s_4 s_5, & y_2 & = & s_2 s_3 s_6 s_7, \\
x_3 & = & s_0 s_1 s_2 s_3, & y_3 & = & s_4 s_5 s_6 s_7. 
\end{matrix} 
\end{equation}
The fact that the triple is ordered gives the relations 
\begin{equation} \label{3diff} \begin{matrix} 1 & = & u_{2,1}  -  u_{1,2} & = & s_1 s_5 - s_2 s_6, \\
1 & = & v_{1,2} - w_{1,2} & = & s_0 s_4 - s_3 s_7, \\
1 & = & u_{3,2} - u_{2,3} & = & s_2 s_3 - s_4 s_5, \\
1 & = & v_{2,3} - w_{2,3} & = & s_0 s_1 - s_6 s_7.
\end{matrix} 
\end{equation}
However, having the order relation does not give any indication as to the size of $u_{3,1} - u_{1,3}$ and $v_{1,3} - w_{1,3}$, except that both must be positive. Note that $2 = (v_{1,3} - w_{1,3})(u_{3,1} - u_{1,3})$. We then put 
\begin{equation} \label{beta} \beta = u_{3,1} - u_{1,3} \in \{1, 2\}.
\end{equation}
The equations in (\ref{3diff}) imply
\[s_1 s_5 - s_2 s_6 = -s_4 s_5 + s_2 s_3 \quad \text{and} \quad s_0 s_4 - s_3 s_7 = s_0 s_1 - s_6 s_7.\]
Rearranging, we obtain
\begin{equation} s_5 (s_1 + s_4) = s_2 (s_3 + s_6) \quad \text{and} \quad s_0 (s_1 - s_4) = s_7 (s_6 - s_3).
\end{equation}
Now (\ref{3diff}) implies that $\gcd(s_2, s_5) = \gcd(s_0, s_7) = 1$, hence 
\[\lambda_1 = \frac{s_1 + s_4}{s_2} = \frac{s_3 + s_6}{s_5}, \quad \lambda_2 = \frac{s_1 - s_4}{s_7} = \frac{s_6 - s_3}{s_0} \in \bZ.\]
It follows that 
\begin{align*} 1 & = s_0 s_1 - s_6 s_7 \\
& = \frac{s_6 - s_3}{\lambda_2} s_1 - s_6 \frac{s_1 - s_4}{\lambda_2} \\
& = \frac{s_4 s_6 - s_1 s_3}{\lambda_2} = \frac{u_{1,3} - u_{3,1}}{\lambda_2} \\
& = \frac{-\beta}{\lambda_2}, 
\end{align*} 
hence $\lambda_2 = -\beta$. Similarly we have 
\begin{align*} 1 & = s_1 s_5 - s_2 s_6 \\
& = s_1 \frac{s_3 + s_6}{\lambda_1} - s_6 \frac{s_1 + s_4}{\lambda_1} \\
& = \frac{s_1 s_3 - s_4 s_6}{\lambda_1} = \frac{u_{3,1} - u_{1,3}}{\lambda_1}, 
\end{align*}
which implies that $\lambda_1 = \beta$. It follows that: 

\begin{proposition} \label{tri-p} Suppose that a triple of B\"{u}chi pairs is given as in (\ref{btrips}). Then we have the relations 
\begin{equation} \label{tripara} \begin{matrix} \beta s_0 & = & s_3 - s_6, & \beta s_2 & = & s_1 + s_4, \\
\beta s_5 & = & s_3 + s_6, & \beta s_7 & = & s_4 - s_1 
\end{matrix} 
\end{equation}
and
\[u_{3,1} - u_{1,3} = s_1 s_3 - s_4 s_6 = \beta.\]
\end{proposition}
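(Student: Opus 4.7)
The plan is to treat the four ``order'' relations in (\ref{3diff}) as a system in the eight parameters $s_0,\dots,s_7$, extract two integer ratios $\lambda_1,\lambda_2$ by a coprimality argument, and then pin both of them down to $\pm\beta$ by substituting back into one of the original relations.

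First I would pair the four equations: subtracting $s_2 s_3 - s_4 s_5 = 1$ from $s_1 s_5 - s_2 s_6 = 1$ and rearranging gives $s_5(s_1+s_4) = s_2(s_3+s_6)$, while subtracting $s_0 s_4 - s_3 s_7 = 1$ from $s_0 s_1 - s_6 s_7 = 1$ yields $s_0(s_1-s_4) = s_7(s_6-s_3)$. The relation $s_1 s_5 - s_2 s_6 = 1$ forces $\gcd(s_2,s_5)=1$, and $s_0 s_1 - s_6 s_7 = 1$ forces $\gcd(s_0,s_7)=1$; consequently there exist integers $\lambda_1,\lambda_2$ with
\[
\lambda_1 \;=\; \frac{s_1+s_4}{s_2} \;=\; \frac{s_3+s_6}{s_5}, \qquad \lambda_2 \;=\; \frac{s_1-s_4}{s_7} \;=\; \frac{s_6-s_3}{s_0}.
\]

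Second, I would identify $\lambda_1$ and $\lambda_2$. From the definition of $\lambda_2$ one has $s_0 = (s_6 - s_3)/\lambda_2$ and $s_7 = (s_1 - s_4)/\lambda_2$; substituting both into $s_0 s_1 - s_6 s_7 = 1$, the $s_1 s_6$ cross-terms cancel and one obtains $\lambda_2 = s_4 s_6 - s_1 s_3$. Under the parametrization (\ref{btrips}), $x_1 = (s_0 s_2)(s_4 s_6)$ and $x_3 = (s_0 s_2)(s_1 s_3)$, so comparison with (\ref{gcddef}) gives $v_{1,3} = s_0 s_2$, $u_{1,3} = s_4 s_6$, $u_{3,1} = s_1 s_3$; hence $s_1 s_3 - s_4 s_6 = u_{3,1} - u_{1,3} = \beta$, giving $\lambda_2 = -\beta$. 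A parallel calculation using $s_2 = (s_1+s_4)/\lambda_1$ and $s_5 = (s_3+s_6)/\lambda_1$ in $s_1 s_5 - s_2 s_6 = 1$ yields $\lambda_1 = \beta$. The four identities of (\ref{tripara}) are then simply the rewritten definitions of $\lambda_1$ and $\lambda_2$, and the concluding equation $s_1 s_3 - s_4 s_6 = \beta$ drops out of the parametrization directly.

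The main obstacle I anticipate is sign-bookkeeping: since $s_1-s_4$ and $s_6-s_3$ can be negative (and $\lambda_2$ itself comes out negative), one must resist absorbing signs when invoking the integer ratios, and must also double-check that the two coprimality facts $\gcd(s_2,s_5)=\gcd(s_0,s_7)=1$ genuinely follow from the Bezout-type identities in (\ref{3diff}) rather than needing an appeal to positivity of the $s_j$. Everything else is routine algebraic manipulation.
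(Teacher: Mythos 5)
Your proposal is correct and follows essentially the same route as the paper: the paper derives $s_5(s_1+s_4)=s_2(s_3+s_6)$ and $s_0(s_1-s_4)=s_7(s_6-s_3)$ from (\ref{3diff}), uses $\gcd(s_2,s_5)=\gcd(s_0,s_7)=1$ to define the integers $\lambda_1,\lambda_2$, and pins them to $\beta$ and $-\beta$ by the identical back-substitution into $s_0s_1-s_6s_7=1$ and $s_1s_5-s_2s_6=1$, with $u_{3,1}-u_{1,3}=s_1s_3-s_4s_6=\beta$ read off from (\ref{btrips}) and (\ref{beta}) exactly as you do. The only cosmetic difference is that the paper places this derivation in the text preceding the proposition and uses the proof environment to verify the converse implication, but the mathematical content matches yours step for step.
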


\begin{proof} We must show that (\ref{tripara}) implies (\ref{3diff}). We do one computation, the remaining ones being similar. We have 
\begin{align*} 1 & = u_{2,1} - u_{1,2} \\
& = s_1 \left(\frac{s_3 + s_6}{\beta} \right) - s_6 \left(\frac{s_1 + s_4}{\beta} \right) \\
& = \frac{s_1 s_3 - s_4 s_6}{\beta},
\end{align*}
as required. 
\end{proof}

\subsection{Proof of Theorem \ref{Buchtripthm}} By Proposition \ref{tprop1}, we have 
\begin{equation} \label{trithm1} u_j = x_j - y_j \quad \text{for} \quad j = 1,2,3.\end{equation} 
Proposition \ref{tri-p} implies 
\begin{equation} \label{trithm2} 
\begin{matrix} x_1 & = & s_4 s_6 \left(\dfrac{s_3 - s_6}{\beta} \right) \left(\dfrac{s_1 + s_4}{\beta} \right), & y_1 & = & s_1 s_3 \left(\dfrac{s_3 + s_6}{\beta} \right) \left(\dfrac{s_4 - s_1}{\beta} \right), \\ \\
x_2 & = & s_1 s_4 \left(\dfrac{s_3 - s_6}{\beta} \right) \left(\dfrac{s_3 + s_6}{\beta} \right), & y_2 & = & s_3 s_6 \left(\dfrac{s_1 + s_4}{\beta} \right) \left(\dfrac{s_4 - s_1}{\beta} \right), \\ \\
x_3 & = & s_1 s_3 \left(\dfrac{s_3 - s_6}{\beta} \right) \left(\dfrac{s_1 + s_4}{\beta} \right), & y_3 & = & s_4 s_6 \left(\dfrac{s_3 + s_6}{\beta} \right) \left(\dfrac{s_4 - s_1}{\beta} \right).
\end{matrix}
\end{equation}  
Equations (\ref{trithm1}) and (\ref{trithm2}) evidently give Theorem \ref{Buchtripthm}, as required. 

\section{Parametrizing quadruples of B\"{u}chi pairs} 
\label{quadsec}

We now consider the map $\sigma_4$, used to associate $\bF_2^4$ with $\{0,1, \cdots, 15\}$, and the corresponding positive parametrizing sequence $\Bs = (s_0, \cdots, s_{15})$. This gives us the following parametrization for our ordered quadruple of B\"{u}chi pairs:

\begin{equation} \label{bquads} \begin{matrix} x_1 & = & s_0 s_2 s_4 s_6 s_8 s_{10} s_{12} s_{14}, & y_1 & = & s_1 s_3 s_5 s_7 s_9 s_{11} s_{13} s_{15}, \\
x_2 & = & s_0 s_1 s_4 s_5 s_8 s_9 s_{12} s_{13}, & y_2 & = & s_2 s_3 s_6 s_7 s_{10} s_{11} s_{14} s_{15}, \\
x_3 & = & s_0 s_1 s_2 s_3 s_8 s_9 s_{10} s_{11}, & y_3 & = & s_4 s_5 s_6 s_7 s_{12} s_{13} s_{14} s_{15}, \\
x_4 & = & s_0 s_1 s_2 s_3 s_4 s_5 s_6 s_7, & y_4 & = & s_8 s_9 s_{10} s_{11} s_{12} s_{13} s_{14} s_{15}. 
\end{matrix}
\end{equation} 
We put 
\begin{equation} \label{beta2} \beta_1 = u_{3,1} - u_{1,3} \quad \text{and} \quad \beta_2 = u_{4,2} - u_{2,4}.
\end{equation}
Applying Proposition \ref{tri-p} to the first three lines of (\ref{bquads}) we obtain the relations 
\begin{equation} \label{qfir} \begin{matrix} \beta_1 s_0 s_8 & = & s_3 s_{11} - s_6 s_{14}, & \beta_1 s_2 s_{10} & = & s_1 s_9 + s_4 s_{12}, \\
\beta_1 s_5 s_{13} & = & s_3 s_{11} + s_6 s_{14}, & \beta_1 s_7 s_{15} & = & s_4 s_{12} - s_1 s_9. \end{matrix}
\end{equation}
Applying Proposition \ref{tri-p} to the last three lines of (\ref{bquads}) we obtain
\begin{equation} \label{qsec} \begin{matrix} \beta_2 s_0 s_1 & = & s_6 s_7 - s_{12} s_{13}, & \beta_2 s_4 s_5 & = & s_2 s_3 + s_8 s_9, \\
\beta_2 s_{10} s_{11} & = & s_6 s_7 + s_{12} s_{13}, & \beta_2 s_{14} s_{15} & = & s_8 s_9 - s_2 s_3.
\end{matrix} 
\end{equation}
Combining (\ref{qfir}) and (\ref{qsec}), we obtain
\begin{equation} \label{bquadseq} \begin{matrix} \beta_2 s_1 s_{11} s_3 & - &  (\beta_2 s_1 s_{14} + \beta_1 s_7 s_8) s_6 & + & & + & \beta_1 s_8 s_{13} s_{12}  &   = & 0 \\
& & \beta_1 s_2 s_6 s_7 & - & \beta_2 s_1 s_{11} s_9 & + & (\beta_1 s_2 s_{13} -\beta_2  s_4 s_{11}) s_{12} & = & 0 \\
(\beta_2 s_4 s_{11} -\beta_1 s_2 s_{13})  s_3 & + & \beta_2 s_4 s_{14} s_6 & - & \beta_1 s_8 s_{13} s_9 & & & = & 0 \\
-\beta_1 s_2 s_7 s_3 & & & + & (\beta_1 s_7 s_8 + \beta_2 s_1 s_{14}) s_9 & - & \beta_2 s_4 s_{14} s_{12} & = & 0. 
\end{matrix} \end{equation} 
Rearranging, we may write (\ref{bquadseq}) as a $4 \times 4$ matrix equation:
\begin{equation} \label{pfaff1} 
\begin{bmatrix} 0 & & \beta_1 s_2 s_7 & & - \beta_2 s_1 s_{11} & & \beta_1 s_2 s_{13} - \beta_2 s_4 s_{11} \\ \\
- \beta_1 s_2 s_7 & & 0 & & \beta_1 s_7 s_8 + \beta_2 s_1 s_{14} & & - \beta_2 s_4 s_{14} \\ \\ 
\beta_2 s_1 s_{11} & &- \beta_1 s_7 s_8 - \beta_2 s_1 s_{14}  & & 0 &  & \beta_1 s_8 s_{13} \\ \\ 
- \beta_1 s_2 s_{13} + \beta_2 s_4 s_{11} & & \beta_2 s_4 s_{14} & & - \beta_1 s_8 s_{13} & & 0 
\end{bmatrix} \begin{bmatrix} s_3 \\ \\ s_6 \\ \\ s_9 \\ \\ s_{12} \end{bmatrix} = \begin{bmatrix} 0 \\ \\ 0 \\ \\ 0 \\ \\ 0 \end{bmatrix}.
\end{equation}
Note that the $4 \times 4$ matrix $A$ in (\ref{pfaff1}) is skew-symmetric. If there are to be any non-zero solutions to (\ref{pfaff1}), the matrix $A$ must be singular. This gives us the \emph{Pfaffian equation}
\begin{equation} \label{pfaffeq} s_1 s_2 s_{13} s_{14} - \beta_2 (2/\beta_1) s_1 s_4 s_{11} s_{14} + \beta_1 (2/\beta_2) s_2 s_7 s_8 s_{13} - s_4 s_7 s_8 s_{11} = 0.
\end{equation}
Rearranging, we obtain 
\begin{equation} \label{pfaffeq2} s_1 s_{14} (s_2 s_{13} - \beta_2 (2/\beta_1) s_4 s_{11}) = s_7 s_8 (s_4 s_{11} - \beta_1 (2/\beta_2) s_2 s_{13}).
\end{equation}
Note that 
\[1 = v_{2,3} - w_{2,3} = s_0 s_1 s_8 s_9 - s_6 s_7 s_{14} s_{15} \]
implies that $\gcd(s_8, s_{14}) = \gcd(s_1, s_7) = 1$. Likewise, 
\[1 = u_{2,1} - u_{1,2} = s_1 s_5 s_9 s_{13} - s_2 s_6 s_{10} s_{14}\]
implies that $\gcd(s_1, s_{14})$ and 
\[1 = v_{1,2} - w_{1,2} = s_0 s_4 s_8 s_{12} - s_3 s_7 s_{11} s_{15} \]
implies that $\gcd(s_8, s_7) = 1$. Therefore, 
\begin{equation} \label{gcd1} \gcd(s_1 s_{14}, s_7 s_8) = 1.\end{equation}
It follows that 
\begin{equation} \label{4bcalc} \delta = \frac{\beta_2 (2/\beta_1) s_4 s_{11} - s_2 s_{13} }{s_7 s_8} = \frac{\beta_1 (2/\beta_2) s_2 s_{13} - s_4 s_{11} }{s_1 s_{14}} \in \bZ.\end{equation} 
We have
\begin{equation} \label{s13} \beta_1 (2/\beta_2) s_2 s_{13} = s_4 s_{11} + \delta s_1 s_{14} \Rightarrow s_{13} =  \frac{s_4  s_{11} + \delta s_1 s_{14}}{ \beta_1 (2/\beta_2) s_2}.\end{equation}
This implies 
\begin{align} \label{s8} s_8 & = \frac{\beta_2 (2/\beta_1) s_4 s_{11} - s_2 s_{13}}{\delta s_7} = \frac{\beta_2 (2/\beta_1) s_4 s_{11} - \beta_2 (s_4 s_{11} + \delta s_1 s_{14})/(2 \beta_1)}{\delta s_7} \\
& =  \frac{   (3/\delta) s_4 s_{11} - s_1 s_{14}}{ \beta_1 (2/\beta_2)  s_7}. \notag
\end{align}
Using the fact that (\ref{pfaffeq2}) implies the non-trivial system in (\ref{pfaff1}) has rank $2$, due to the skew-symmetry, we obtain 
\begin{align} \label{s3s6sub} s_3 & = \frac{(\beta_1 s_7 s_8 + \beta_2 s_1 s_{14}) s_9 - \beta_2 s_4 s_{14} s_{12}}{\beta_1 s_2 s_7} = \frac{( s_1 s_{14} + (3/\delta) s_4 s_{11}) s_9 - 2 s_4 s_{14} s_{12}}{2 s_2 s_7} \\
s_6 & = \frac{\beta_2 s_1 s_{11} s_9 - (\beta_1 s_2 s_{13} - \beta_2 s_4 s_{11}) s_{12}}{\beta_1 s_2 s_7} = \frac{2s_1 s_{11} s_9 - (\delta s_1 s_{14} - s_4 s_{11}) s_{12} }{2 s_2 s_7}. \notag
\end{align}
Substituting  (\ref{s13}), (\ref{s8}), and (\ref{s3s6sub}) into (\ref{qfir}) and (\ref{qsec}), we obtain 
\begin{equation} \label{s0s15} \begin{matrix} s_0 & = &  \dfrac{s_9 s_{11} - \delta s_{12} s_{14}}{\beta_1 s_2}, & s_5 & = & \dfrac{(3/\delta) s_9 s_{11} - s_{12} s_{14}}{\beta_1 s_7}, \\ \\
s_{10} & = & \dfrac{s_1 s_9 + s_4 s_{12}}{\beta_1 s_2}, & s_{15} & = & \dfrac{-s_1 s_9 + s_4 s_{12}}{\beta_1 s_7}.
\end{matrix}  
\end{equation}  

Note that this suffices to prove Theorem \ref{Buchquadthm}. \\

We are now led to the following proposition: 

\begin{proposition} \label{quadprop} Suppose that $\Bs = (s_0, \cdots, s_{15}) \in \bZ^{16}$ is a parametrizing sequence given as in (\ref{bquads}) and satisfies (\ref{pfaffeq2}). Then 
\begin{equation} \label{pfaffeq3} v_{1,4} - w_{1,4} = \delta. \end{equation} 
\end{proposition}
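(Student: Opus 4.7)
The plan is to (i) read off $v_{1,4}$, $w_{1,4}$, $u_{1,4}$, $u_{4,1}$ directly from (\ref{bquads}), (ii) use the B\"uchi sum condition to constrain $v_{1,4} - w_{1,4}$ to $\{1,3\}$, and (iii) distinguish between the two cases via the Pfaffian-driven substitutions, showing the difference equals $\delta$.

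For step (i), comparing (\ref{bquads}) with (\ref{gcddef}) specialized to $(i,j)=(1,4)$ gives the natural identifications
\[
v_{1,4} = s_0 s_2 s_4 s_6, \quad u_{1,4} = s_8 s_{10} s_{12} s_{14}, \quad u_{4,1} = s_1 s_3 s_5 s_7, \quad w_{1,4} = s_9 s_{11} s_{13} s_{15}.
\]
The decomposition (\ref{gcddef}) is verified directly, and the required coprimalities follow from the analogues of (\ref{3diff}) for the three adjacent triples, ensuring these are the genuine gcd decompositions. For step (ii), since $\B_4$ is a B\"uchi quadruple, $(x_4+y_4)-(x_1+y_1)=3$. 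Substituting the factorization yields
\[
(v_{1,4} - w_{1,4})(u_{4,1} - u_{1,4}) = 3,
\]
and both factors are positive integers by Lemma \ref{ordlem1}, forcing $v_{1,4} - w_{1,4} \in \{1, 3\}$.

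For step (iii), I substitute the expressions from (\ref{s0s15}), (\ref{s13}), (\ref{s8}), and (\ref{s3s6sub}) into $v_{1,4} - w_{1,4} = s_0 s_2 s_4 s_6 - s_9 s_{11} s_{13} s_{15}$, clearing denominators by multiplying through by $\beta_1^2(2/\beta_2) s_2 s_7$. Expanding and collecting, the Pfaffian identity (\ref{pfaffeq2}) (which asserts precisely that both expressions defining $\delta$ in (\ref{4bcalc}) agree) should cause the non-$\delta$ terms to cancel, leaving the result equal to $\delta$ times the common denominator.

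The main obstacle is the bookkeeping in step (iii): the computation mixes the small parameters $\beta_1, \beta_2 \in \{1,2\}$ and $\delta \in \{1,3\}$ with degree-four monomials in the $s_j$. Two streamlining tactics are available. First, one can split into the four cases $(\beta_1,\beta_2) \in \{1,2\}^2$, each of which turns $(2/\beta_i)$ into an explicit integer and collapses many of the symbolic factors. Second, and more elegantly, one may try to exhibit $v_{1,4} - w_{1,4}$ as $\delta$ times a \emph{manifestly positive integer} built from the free variables; since $v_{1,4} - w_{1,4} \in \{1,3\}$ is known a priori, the multiplier is forced to be $1$, yielding $v_{1,4} - w_{1,4} = \delta$ without completing a full symbolic expansion.
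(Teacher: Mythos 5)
Your steps (i) and (ii) are fine: the identifications $v_{1,4}=s_0s_2s_4s_6$, $u_{1,4}=s_8s_{10}s_{12}s_{14}$, $u_{4,1}=s_1s_3s_5s_7$, $w_{1,4}=s_9s_{11}s_{13}s_{15}$ are correct, and telescoping the sum condition does give $(v_{1,4}-w_{1,4})(u_{4,1}-u_{1,4})=3$ with both factors positive, hence $v_{1,4}-w_{1,4}\in\{1,3\}$. The gap is in step (iii). After substituting (\ref{s13}), (\ref{s8}), (\ref{s3s6sub}) and (\ref{s0s15}), the quantity $s_0s_2s_4s_6-s_9s_{11}s_{13}s_{15}$ does \emph{not} collapse symbolically to $\delta$. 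The Pfaffian identity (\ref{pfaffeq2}) has already been consumed in producing those substitutions (it is precisely the integrality statement (\ref{4bcalc}) defining $\delta$), so invoking it again yields no further cancellation. What the substitution actually produces, as a rational function of the remaining free parameters $s_1,s_2,s_4,s_7,s_9,s_{11},s_{12},s_{14}$, is
\[
v_{1,4}-w_{1,4}=\frac{\beta_2(\delta^2 s_1s_4s_{12}^2s_{14}^2-\delta s_4^2s_{11}s_{12}^2s_{14}-4\delta s_1s_4s_9s_{11}s_{12}s_{14}+\delta s_1^2s_9^2s_{11}s_{14}+3 s_1s_4s_9^2s_{11}^2)}{2\beta_1^2 s_2 s_7},
\]
which is not identically $\delta$. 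The missing idea, and the one the paper uses, is to run the same substitution on $u_{2,1}-u_{1,2}$ and observe that the resulting expression is exactly the displayed one divided by $\delta$; the ordering relation $u_{2,1}-u_{1,2}=1$ (one of the quadruple analogues of (\ref{3diff}), an input your argument never invokes) then forces $v_{1,4}-w_{1,4}=\delta$.

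Your proposed shortcuts do not repair this. The case split on $(\beta_1,\beta_2)$ only affects bookkeeping. The ``manifestly positive multiplier'' argument fails on two counts: the multiplier is the expression above divided by $\delta$, whose integrality and positivity are no more visible than the statement being proved; and even granting that $v_{1,4}-w_{1,4}=\delta\cdot m$ with $m$ a positive integer and $v_{1,4}-w_{1,4}\in\{1,3\}$, this does not force $m=1$, since $(\delta,m)=(1,3)$ is consistent with all of those constraints. Step (ii) is still worth keeping --- it is how one later pins $\delta$ to $\{1,3\}$ --- but it cannot substitute for the identification of the leftover factor with $u_{2,1}-u_{1,2}$.
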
 

\begin{proof} Using (\ref{s8}), (\ref{s13}), (\ref{s3s6sub}), and (\ref{s0s15}) we obtain that 
\[1 = u_{2,1} - u_{1,2} = \frac{\beta_2(\delta^2 s_1 s_4 s_{12}^2 s_{14}^2  - \delta s_4^2 s_{11} s_{12}^2 s_{14}  - 4 \delta s_1 s_4 s_9 s_{11} s_{12}  s_{14} + \delta s_1^2 s_9^2 s_{11} s_{14} + 3 s_1 s_4 s_9^2 s_{11}^2) }{2 \beta_1^2 \delta s_2 s_7} \]
and 
\begin{equation} v_{1,4} - w_{1,4} = \frac{\beta_2(\delta^2 s_1 s_4 s_{12}^2 s_{14}^2  - \delta s_4^2 s_{11} s_{12}^2 s_{14}  - 4 \delta s_1 s_4 s_9 s_{11} s_{12}  s_{14} + \delta s_1^2 s_9^2 s_{11} s_{14} + 3 s_1 s_4 s_9^2 s_{11}^2) }{2 \beta_1^2 s_2 s_7} = \delta (u_{2,1} - u_{1,2}).
\end{equation}
The claim then follows from the fact that $u_{2,1} - u_{1,2} = 1$, by the assumption that $\Bs$ is the parametrizing sequence of an ordered quadruple of B\"{u}chi pairs. \end{proof} 

\begin{remark} The calculations in this section will be repeated in related contexts when proving the so-called \emph{calibration lemmas}, namely Lemmas \ref{b4cal}, \ref{b2cal}, and \ref{b3cal}. The necessary calculations are nearly identical to the ones in this section but with slightly different parameters. 
\end{remark}

\section{Parametrizing sequence for quintuples of B\"{u}chi pairs, and Pfaffian equations}
\label{quintuples} 

Let $\B = \B_5 = \{(x_i, y_i) : 1 \leq i \leq 5\}$ be a non-trivial quintuple of B\"{u}chi pairs. Consider the map $\sigma_5 : \bF_2^{5} \rightarrow \{0, 1, \cdots, 31\}$ and the corresponding positive parametrizing sequence $\Bs$. This gives the parametrization of our quintuple $\B$:
\begin{equation} \label{quin} 
\begin{matrix} x_1 & = & s_0 s_2 s_4 s_6 s_8 s_{10} s_{12} s_{14} s_{16} s_{18} s_{20} s_{22} s_{24} s_{26} s_{28} s_{30}, \\
y_1 & = & s_1 s_3 s_5 s_7 s_9 s_{11} s_{13} s_{15} s_{17} s_{19} s_{21} s_{23} s_{25} s_{27} s_{29} s_{31}, \\ \\
x_2 & = & s_0 s_1 s_4 s_5 s_8 s_9 s_{12} s_{13} s_{16} s_{17} s_{20} s_{21} s_{24} s_{25} s_{28} s_{29}, \\
y_2 & = & s_2 s_3 s_6 s_7 s_{10} s_{11} s_{14} s_{15} s_{18} s_{19} s_{22} s_{23} s_{26} s_{27} s_{30} s_{31}, \\ \\ 
x_3 & = & s_0 s_1 s_2 s_3 s_8 s_9 s_{10} s_{11} s_{16} s_{17} s_{18} s_{19} s_{24} s_{25} s_{26} s_{27}, \\
y_3 & = & s_4 s_5 s_6 s_7 s_{12} s_{13} s_{14} s_{15} s_{20} s_{21} s_{22} s_{23} s_{28} s_{29} s_{30} s_{31}, \\ \\
x_4 & = & s_0 s_1 s_2 s_3 s_4 s_5 s_6 s_7 s_{16} s_{17} s_{18} s_{19} s_{20} s_{21} s_{22} s_{23}, \\
y_4 & = & s_8 s_9 s_{10} s_{11} s_{12} s_{13} s_{14} s_{15} s_{24} s_{25} s_{26} s_{27} s_{28} s_{29} s_{30} s_{31}, \\ \\
x_5 & = & s_0 s_1 s_2 s_3 s_4 s_5 s_6 s_7 s_8 s_9 s_{10} s_{11} s_{12} s_{13} s_{14} s_{15}, \\
y_5 & = & s_{16} s_{17} s_{18} s_{19} s_{20} s_{21} s_{22} s_{23} s_{24} s_{25} s_{26} s_{27} s_{28} s_{29} s_{30} s_{31}. 
\end{matrix}
\end{equation}

The assumption that our quintuple is non-trivial and ordered allows us to assume without loss of generality that 
\begin{equation} \label{con1} u_{j+1, j} - u_{j, j+1} = v_{j,j+1} - w_{j,j+1} = 1 \quad \text{for} \quad j = 1,2,3,4.
\end{equation}
However, we can only determine the sign of $u_{j,i} - u_{i,j}$ and $v_{i,j} - w_{i,j}$ for $j > i + 1$. The precise value of these differences will be the so-called \emph{structural constants}. Put
\begin{equation} \label{con2} u_{j+2, j} - u_{j,j+2} = \beta_j  \quad \text{for} \quad j = 1,2,3,
\end{equation}
\begin{equation} \label{con3} \delta_j = v_{j+3, j} - w_{j+3,j}  \quad \text{for} \quad j = 1,2,
\end{equation}
and from $4 = (u_{5,1} - u_{1,5})(v_{1,5} - w_{1,5})$, put
\begin{equation} \label{gamma}  
\gamma = v_{5,1} - w_{5,1}. 
\end{equation}

We prove, as a consequence of Definition \ref{ordered}, the following result for the structural constants of $\B_5$: 

\begin{lemma} \label{ordlem2} Let $\B_5$ be given by (\ref{quin}) with the structural constants $\beta_j, \delta_j$, and $\gamma$ be given as in (\ref{con2}), (\ref{con3}), and (\ref{gamma}). Then we have 
\begin{equation} \label{strucon} \beta_j \in \{1,2\} \quad \text{for} \quad j = 1,2,3, \quad \delta_j \in \{1,3\} \quad \text{for} \quad j = 1,2, \quad \text{and} \quad \gamma \in \{1,2,4\}.
\end{equation}
\end{lemma}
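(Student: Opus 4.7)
The plan is to observe that the structural constants $\beta_j$, $\delta_j$, and $\gamma$ are all governed by a single telescoping identity combined with the multiplicative decomposition (\ref{gcddef}). First, iterating the relation $(x_{i+1}+y_{i+1})-(x_i+y_i)=1$ from Definition \ref{buchim} yields
$$(x_j+y_j)-(x_i+y_i) = j-i \qquad \text{for } 1 \leq i < j \leq 5.$$
On the other hand, (\ref{gcddef}) gives $x_j - x_i = v_{i,j}(u_{j,i}-u_{i,j})$ and $y_j - y_i = -w_{i,j}(u_{j,i}-u_{i,j})$, so
$$(v_{i,j}-w_{i,j})(u_{j,i}-u_{i,j}) = j-i.$$

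Specializing to $j-i \in \{2,3,4\}$, this factorization identity becomes
$$(v_{j,j+2}-w_{j,j+2})\,\beta_j = 2, \qquad \delta_j\,(u_{j+3,j}-u_{j,j+3}) = 3, \qquad \gamma\,(u_{5,1}-u_{1,5}) = 4.$$
The desired conclusion then follows at once by enumerating positive divisor pairs of $2$, $3$, and $4$, provided we can show that both factors in each product are \emph{positive} integers (they are certainly integers, being differences of non-negative integers).

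Positivity is where the hypothesis that $\B_5$ is ordered and non-trivial enters. Lemma \ref{ordlem1} gives $x_1 \leq x_2 \leq \cdots \leq x_5$ and $y_1 \geq y_2 \geq \cdots \geq y_5$, so by transitivity $x_i \leq x_j$ and $y_i \geq y_j$ for all $i < j$. Since non-triviality forces $v_{i,j}, w_{i,j} > 0$, the formulas $x_j - x_i = v_{i,j}(u_{j,i}-u_{i,j})$ and $y_i - y_j = w_{i,j}(u_{j,i}-u_{i,j})$ yield $u_{j,i} \geq u_{i,j}$. This inequality must be strict, for otherwise $x_i = x_j$ and $y_i = y_j$, contradicting $(x_j + y_j)-(x_i+y_i) = j-i \geq 2 > 0$. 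With one factor strictly positive and the product a positive integer, the other factor is also a strictly positive integer, completing the argument.

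The main obstacle is minor: it is essentially bookkeeping, the only subtlety being the transitive extension of the order relation of Lemma \ref{ordlem1} from consecutive indices to arbitrary pairs, and the verification that $v_{i,j}$ and $w_{i,j}$ do not vanish in a non-trivial tuple (which is immediate from $x_i y_i > 0$ and the definitions in (\ref{gcddef})).
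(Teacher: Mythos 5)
Your proposal is correct and follows essentially the same route as the paper: factor the telescoped sum difference $j-i$ as $(v_{i,j}-w_{i,j})(u_{j,i}-u_{i,j})$ via (\ref{gcddef}), use integrality to restrict to divisor pairs, and use the ordering (via Lemma \ref{ordlem1}) to force both factors positive. The paper writes this out only for $\beta_1$ and declares the other cases analogous, whereas you handle all the constants uniformly, but the underlying argument is identical.
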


\begin{proof} We prove that $\beta_1 \in \{1,2\}$, the proofs for $\beta_2, \beta_3$ being nearly the same. We have 
\begin{align*} 2 & = (x_3 + y_3) - (x_1 + y_1) = (x_3 - x_1) - (y_1 - y_3) \\
& = (v_{1,3} u_{3,1} - v_{1,3} u_{1,3}) - (w_{1,3} u_{3,1} - w_{1,3} u_{1,3}) \\
& = (u_{3,1} - u_{1,3})(v_{1,3} - w_{1,3}) = \beta_1 (v_{1,3} - w_{1,3}).
\end{align*}
Since $\beta_1, v_{1,3}, w_{1,3} \in \bZ$, we have $\beta_1 \in \{\pm 1, \pm 2\}$. \\

Since $x_3 > x_1$ by the assumption that $\B_5$ is ordered, we have 
\[0 < x_3 - x_1 = v_{1,3}(u_{3,1} - u_{1,3}) = \beta_1 v_{1,3},\]
which shows that $\beta_1 > 0$. Hence, we have $\beta_1 \in \{1,2\}$, as desired. \\

The proofs for the other cases are nearly identical, and so we omit the details. \end{proof}

Our next step is to exploit the fact that (\ref{con1}), (\ref{con2}), (\ref{con3}), (\ref{gamma}), and Lemma \ref{ordlem2} impose an enormous amount of structure on any potential quintuple of B\"{u}chi pairs. Indeed, we note that deleting any one pair $(x_j, y_j)$ with $1 \leq j \leq 5$ in (\ref{quin}) gives rise to a quadruple of pairs that is similar to a quadruple of B\"{u}chi pairs, except the relations (\ref{con1}), (\ref{con2}), and (\ref{con3}) need to be modified. However, each time a skew-symmetric system like (\ref{pfaff1}) arises. These lead to five \emph{Pfaffian equations} which together provide an enormous amount of structural restraint. \\

By removing exactly one pair $(x_j, y_j)$ from (\ref{quin}), we obtain five quadruples each satisfying a matrix equation analogous to (\ref{pfaff1}). We let these $5$ quadruples be denoted $\B^{(i)}$ for $i = 1, \cdots, 5$, where the superscript indicates the pair that is deleted. In each case, we use a $16$-tuple $\Br = \Br^{(i)} \in \bZ^{16}$ to parametrize the quadruple $\B^{(i)}$. \\

We record the following \emph{lexicon lemma}, which shows how $\Br^{(i)}$ relates to $\Bs$ for each $i = 1,2,3,4,5$. 

\begin{lemma}[Lexicon lemma for the sub-quadruples] \label{lexlem} Let $\B^{(i)} = \B \setminus \{(x_i, y _i)\}$, and let $\Br^{(i)}$ be a positive parametrizing sequence for $i = 1,2,3,4,5$. We then have: 
\begin{equation} \label{b5lex} r_i^{(5)} = s_i s_{i + 16}, \quad 0 \leq i \leq 15,
\end{equation}
\begin{equation}\label{b1lex} r_i^{(1)} = s_{2i} s_{2i+1}, \quad 0 \leq i \leq 15,
\end{equation}
\begin{equation} \label{b4lex} 
r_i^{(4)}  = \begin{cases}  s_i s_{i+8}, & 0 \leq i \leq 7 \\
 s_{i+8} s_{i+16}, & 8 \leq i \leq 15, 
\end{cases}
\end{equation}
\begin{align} \label{b2lex} 
r_{2i}^{(2)} & = s_{4i} s_{4i+2}, \quad i = 0, \cdots, 7 \\
r_{2i+1}^{(2)} & = s_{4i+1} s_{4i+3}, \quad i = 0, \cdots, 7, \notag
\end{align}
and
\begin{equation} \label{b3lex} 
r_{4i + j}^{(3)} = s_{8i + j} s_{8i + 4 + j} \quad 0 \leq i \leq 7, \quad 0 \leq j \leq 3.
\end{equation}
\end{lemma}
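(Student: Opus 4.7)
The plan is to view the parametrizing sequence $\Bs$ through the lens of the bijection $\sigma_5 \colon \bF_2^5 \to \{0, 1, \ldots, 31\}$, under which bit $j-1$ of an index $k$ records whether $s_k$ contributes to $x_j$ (bit value $0$) or to $y_j$ (bit value $1$). Deletion of the pair $(x_i, y_i)$ then corresponds precisely to projecting $\bF_2^5 \to \bF_2^4$ by omitting coordinate $i-1$, and a positive parametrizing sequence for $\B^{(i)}$ is obtained by multiplying together the two original $s$-entries whose $5$-bit indices agree outside position $i-1$. All five formulas in the lemma are then mechanical consequences of unpacking this bit-contraction.

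First I would re-examine the inductive step of Proposition \ref{posseq}, which shows that passing from an $n$-tuple parametrization $\Br$ to an $(n+1)$-tuple parametrization $\Bs$ amounts to factoring $r_j = s_j s_{j+2^n}$, with bit $n$ recording assignment to $x_{n+1}$ versus $y_{n+1}$. Reversing this step recovers $r_j = s_j s_{j+2^n}$, which is exactly (\ref{b5lex}), since pair $5$ corresponds to the top bit (bit $4$) in (\ref{quin}). Positivity of $r_j^{(5)}$ is inherited from $\Bs$.

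For the other four cases the same bit-contraction principle applies, but the contracted bit is not the top one. For each $i \in \{1,2,3,4\}$, the new $\sigma_4$-indexing uses the four bits of $\bF_2^5$ in positions $\{0,1,2,3,4\} \setminus \{i-1\}$, taken in their original order. Writing a new index $\ell = \sum_{k=0}^3 \ep_k 2^k$, the two $5$-bit indices collapsing to $\ell$ are obtained by inserting $0$ and $1$ at bit position $i-1$ of $\ell$. Unpacking this arithmetic yields (\ref{b1lex})--(\ref{b3lex}): for example, when $i = 4$ (bit $3$), an index $\ell$ with $0 \leq \ell \leq 7$ has bit $3$ equal to $0$, and inserting $0$ or $1$ at position $3$ gives the original indices $\ell$ and $\ell + 8$, producing $r_\ell^{(4)} = s_\ell s_{\ell+8}$; the case $8 \leq \ell \leq 15$ follows by writing $\ell = \ell' + 8$ and observing that insertion yields $\ell' + 16 = \ell + 8$ and $\ell' + 24 = \ell + 16$, matching the second line of (\ref{b4lex}). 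The cases $i = 1, 2, 3$ are analogous routine manipulations of binary expansions.

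Finally, to confirm that each $\Br^{(i)}$ so defined is genuinely a parametrizing sequence for the quadruple $\B^{(i)}$, I would verify that $\prod_{k \in S_4^{(m;0)}} r_k^{(i)}$ coincides with $x_j$ whenever the pair $(x_j, y_j)$ of $\B$ occupies position $m+1$ of $\B^{(i)}$. Under the bit-contraction description, this product equals the product of all $s_k$ whose bit at position $j-1$ is $0$, which is exactly $x_j$ by (\ref{quin}); the analogous identity for $y_j$ is immediate. The only real obstacle is bookkeeping, namely tracking the correspondence between the $5$-bit indexing of the original quintuple, the $4$-bit indexing of each reduced quadruple, and the identification of bit positions with B\"{u}chi pair labels. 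Once the combinatorial picture is set up correctly, each of the five cases reduces to a direct verification.
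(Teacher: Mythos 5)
Your proposal is correct and follows essentially the same route as the paper: the paper's proof defines insertion maps $\psi_i$ that place a $0$ at bit position $i-1$ and sets $r_j^{(i)} = s_{\psi_i(j)} s_{\psi_i(j) + 2^{i-1}}$, which is precisely your bit-contraction of $\bF_2^5 \to \bF_2^4$ omitting coordinate $i-1$. Your write-up is in fact somewhat more explicit than the paper's (which compresses the verification into ``comparing'' the displayed parametrizations), but the underlying argument is identical.
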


\begin{proof} We define maps $\psi_i : \{0, 1, \cdots, 15\} \rightarrow \{0, 1, \cdots, 31\}$, $1 \leq i \leq 5$ by 
\begin{equation} \psi_i \left(\ep_0 2^0 + \ep_1 2^1 + \ep_2 2^2 + \ep_3 2^3  \right)  = \sum_{0 \leq j < i-1} \ep_j 2^j + \sum_{i-1 \leq j \leq 3} \ep_j 2^{j+1}.
\end{equation}
Then comparing (\ref{b5quads}), (\ref{b1quads}), (\ref{b4quads}), (\ref{B2quads}), and (\ref{B3quads}), obtained from (\ref{bquads}), together with (\ref{quin}) we obtain
\begin{equation} \label{mainlexeq} r_j^{(i)} = s_{\psi_i(j)} s_{\psi_i(j) + 2^{i-1}}, \quad 1 \leq i \leq 5, \quad 0 \leq j \leq 15
\end{equation} 
which gives equations (\ref{b5lex}), (\ref{b1lex}), (\ref{b4lex}), (\ref{b2lex}), and (\ref{b3lex}), as desired. 
\end{proof}

For notational ease, we abuse notation and suppress the superscript from $\Br$ in each instance.\\

Note that $\B^{(5)}, \B^{(1)}$ are essentially given in an identical fashion as (\ref{pfaff1}), so we start with these. 

\subsection{The quadruples $\B^{(5)}$ and $\B^{(1)}$} 
\label{B5B1}

We begin with $\B^{(5)}$, which as we recall is obtained from $\B = \{(x_i, y_i) : 1 \leq i \leq 5\}$ by deleting the last pair $(x_5, y_5)$. This gives 

\begin{equation} \label{b5quads} \begin{matrix} x_1 & = & r_0 r_2 r_4 r_6 r_8 r_{10} r_{12} r_{14}, & y_1 & = & r_1 r_3 r_5 r_7 r_9 r_{11} r_{13} r_{15}, \\
x_2 & = & r_0 r_1 r_4 r_5 r_8 r_9 r_{12} r_{13}, & y_2 & = & r_2 r_3 r_6 r_7 r_{10} r_{11} r_{14} r_{15}, \\
x_3 & = & r_0 r_1 r_2 r_3 r_8 r_9 r_{10} r_{11}, & y_3 & = & r_4 r_5 r_6 r_7 r_{12} r_{13} r_{14} r_{15}, \\
x_4 & = & r_0 r_1 r_2 r_3 r_4 r_5 r_6 r_7, & y_4 & = & r_8 r_9 r_{10} r_{11} r_{12} r_{13} r_{14} r_{15}. 
\end{matrix}
\end{equation} 
We note that (\ref{b5quads}) is identical to (\ref{bquads}) except we have replaced the $s_j$'s with $r_j$'s. Recalling Lemma \ref{lexlem} and (\ref{pfaff1}) gives 
\small
\begin{equation} \label{pfaffb5} 
\begin{bmatrix} 0 &  \beta_1 s_2 s_7 s_{18} s_{23}  &  -  \beta_2 s_1 s_{11} s_{17} s_{27} & \beta_1 s_2 s_{13} s_{18} s_{29} - \beta_2 s_4 s_{11} s_{20} s_{27} \\ \\
- \beta_1 s_2  s_7 s_{18} s_{23}  & 0  & \beta_1 s_7 s_8 s_{23} s_{24} + \beta_2 s_1 s_{14} s_{17} s_{30}  & - \beta_2 s_4 s_{14} s_{20} s_{30} \\ \\ 
\beta_2 s_1 s_{11} s_{17} s_{27}  &- \beta_1 s_7 s_8 s_{23} s_{24} - \beta_2 s_1 s_{14} s_{17} s_{30}   & 0   & \beta_1 s_8 s_{13} s_{24} s_{29} \\ \\ 
-\beta_1 s_2 s_{13} s_{18} s_{29} + \beta_2 s_4 s_{11} s_{20} s_{27}  & \beta_2 s_4 s_{14} s_{20} s_{30}  & - \beta_1 s_8 s_{13} s_{24} s_{29}  & 0 
\end{bmatrix}. 
\end{equation}
For $\B^{(1)}$, we obtain 
\begin{equation} \label{b1quads} \begin{matrix} x_2 & = & r_0 r_2 r_4 r_6 r_8 r_{10} r_{12} r_{14}, & y_2 & = & r_1 r_3 r_5 r_7 r_9 r_{11} r_{13} r_{15}, \\
x_3 & = & r_0 r_1 r_4 r_5 r_8 r_9 r_{12} r_{13}, & y_3 & = & r_2 r_3 r_6 r_7 r_{10} r_{11} r_{14} r_{15}, \\
x_4 & = & r_0 r_1 r_2 r_3 r_8 r_9 r_{10} r_{11}, & y_4 & = & r_4 r_5 r_6 r_7 r_{12} r_{13} r_{14} r_{15}, \\
x_5 & = & r_0 r_1 r_2 r_3 r_4 r_5 r_6 r_7, & y_5 & = & r_8 r_9 r_{10} r_{11} r_{12} r_{13} r_{14} r_{15}. 
\end{matrix}
\end{equation} 
Recalling (\ref{b1lex}), we see that (\ref{pfaff1}) gives

\begin{equation} \label{pfaffb1} 
\begin{bmatrix} 0 &  \beta_2 s_4 s_5 s_{14} s_{15} &  - \beta_3 s_2 s_3 s_{22} s_{23} &  \beta_2 s_4 s_5 s_{26} s_{27} - \beta_3 s_8 s_9 s_{22} s_{23} \\ \\
- \beta_2 s_4 s_5 s_{14} s_{15} &  0 &  \beta_2 s_{14} s_{15} s_{16} s_{17} + \beta_3 s_2 s_3 s_{28} s_{29} &  - \beta_3 s_8 s_9 s_{28} s_{29} \\ \\ 
\beta_3 s_2 s_3 s_{22} s_{23} & -\beta_2 s_{14} s_{15} s_{16} s_{17} - \beta_3 s_2 s_3 s_{28} s_{29}   & 0 &  \beta_2 s_{16} s_{17} s_{26} s_{27} \\ \\ 
- \beta_2 s_4 s_5 s_{26} s_{27} +\beta_3 s_8 s_9 s_{22} s_{23} & \beta_3 s_8 s_9 s_{28} s_{29} &  - \beta_2 s_{16} s_{17} s_{26} s_{27} &  0 
\end{bmatrix}
\end{equation}
\normalsize
Singularity of the matrices in (\ref{pfaffb5}) and (\ref{pfaffb1}) then lead to the Pfaffian equations:
\begin{equation} \label{pfaffeqB5} s_1  s_{14} s_{17} s_{30} ( s_2 s_{13} s_{18} s_{29} - \beta_2 (2/\beta_1) s_4 s_{11} s_{20} s_{27} ) = s_7 s_8 s_{23} s_{24} (s_4 s_{11} s_{20} s_{27} - \beta_1 (2/\beta_2) s_2  s_{13} s_{18} s_{29} )  
\end{equation}
and
\begin{equation} \label{pfaffeqB1} s_2 s_3  s_{28} s_{29}  ( s_4 s_5 s_{26} s_{27}  - \beta_3 (2/\beta_2) s_8 s_9 s_{22} s_{23} ) = s_{14} s_{15} s_{16} s_{17}  (s_8 s_9 s_{22} s_{23}  - \beta_2 (2/\beta_3) s_4 s_5  s_{26} s_{27} ).
\end{equation}

An important procedure throughout this section is that of \emph{calibration}. This is essentially requiring that the relations in (\ref{con1}), (\ref{con2}), (\ref{con3}), and (\ref{gamma}) hold across the five quadruples. In particular, we want the structural constants of each quadruple to align with the structural constants of the quintuple. The process is to compute explicitly, using computer algebra, the expressions $v_{j,k} - w_{j,k}$ and $u_{j,k} - u_{k,j}$ as functions of $\Br^{(i)}$ for various values of $i,j,k$, just as in the proof of Proposition \ref{quadprop}, using the Pfaffian equations and the analogues of (\ref{s3s6sub}) and (\ref{s0s15}). \\

Calibrating as in Section \ref{quadsec}, especially Proposition \ref{quadprop}, we obtain 
\begin{equation} \label{b5caleq} \delta_1 (s_1 s_{30})(s_{14} s_{17}) = \beta_1 (2/\beta_2) (s_2 s_{29})(s_{13} s_{18}) - (s_4 s_{27})(s_{11} s_{20}) ,\end{equation}
\[\delta_1  (s_8 s_{24})(s_7 s_{23}) = \beta_2 (2/\beta_1) (s_4 s_{27})(s_{11} s_{20}) - (s_2 s_{29})(s_{13} s_{18}),\]
and
\begin{equation} \label{b1caleq} \delta_2 (s_2 s_{29})(s_3 s_{28}) = \beta_2 (2/\beta_3) (s_4 s_{27})(s_5 s_{26}) - (s_8 s_{23})(s_9 s_{22}), \end{equation}
\[\delta_2 (s_{15} s_{16})(s_{14} s_{17}) = \beta_3 (2/\beta_2) (s_8 s_{23})(s_9 s_{22}) - (s_4 s_{27})(s_5 s_{26}).\]

The cases of $\B^{(2)}, \B^{(3)}$, and $\B^{(4)}$ will require separate treatment. We begin with $\B^{(4)}$. 

\subsection{The quadruple $\B^{(4)}$}
\label{B4par}  

We start with the analogue of (\ref{bquads}) for $\B^{(4)}$, which gives 
\begin{equation} \label{b4quads} \begin{matrix} x_1 & = & r_0 r_2 r_4 r_6 r_8 r_{10} r_{12} r_{14}, & y_1 & = & r_1 r_3 r_5 r_7 r_9 r_{11} r_{13} r_{15}, \\
x_2 & = & r_0 r_1 r_4 r_5 r_8 r_9 r_{12} r_{13}, & y_2 & = & r_2 r_3 r_6 r_7 r_{10} r_{11} r_{14} s_{15}, \\
x_3 & = & r_0 r_1 r_2 r_3 r_8 r_9 r_{10} r_{11}, & y_3 & = & r_4 r_5 r_6 r_7 r_{12} r_{13} r_{14} r_{15}, \\
x_5 & = & r_0 r_1 r_2 r_3 r_4 r_5 r_6 r_7, & y_5 & = & r_8 r_9 r_{10} r_{11} r_{12} r_{13} r_{14} r_{15}. 
\end{matrix}
\end{equation} 

Applying Proposition \ref{tri-p} to the first three lines of (\ref{b4quads}) we obtain the relations 
\begin{equation} \label{B4qfir} \begin{matrix} \beta_1 r_0 r_8 & = & r_3 r_{11} - r_6 r_{14}, &\beta_1 r_2 r_{10} & = & r_1 r_9 + r_4 r_{12}, \\
\beta_1 r_5 r_{13} & = & r_3 r_{11} + r_6 r_{14}, & \beta_1 r_7 r_{15} & = & r_4 r_{12} - r_1 r_9. \end{matrix}
\end{equation}
However, we cannot apply Proposition \ref{tri-p} to the last three lines of (\ref{b4quads}), since (\ref{3diff}) no longer holds. Indeed, we need to replace (\ref{3diff}) with
\begin{equation} \label{3diff2} \begin{matrix} 
1 & = & u_{3,2} - u_{2,3} & = & q_1 q_5 - q_2 q_6, \\
1 & = & v_{2,3} - w_{2,3} & = & q_0 q_4 - q_3 q_7, \\
\beta_3 & = & u_{5,3} - u_{3,5} & = & q_2 q_3 - q_4 q_5, \\
2/\beta_3 & = & v_{5,3} - w_{5,3} & = & q_0 q_1 - q_6 q_7.
\end{matrix} 
\end{equation}
We then obtain 
\[\beta_3(q_1 q_5 - q_2 q_6) = q_2 q_3 - q_4 q_5 \quad \text{and} \quad (2/\beta_3) (q_0 q_4 - q_3 q_7) = q_0 q_1 - q_6 q_7.\]
Rearranging gives
\[q_5 (\beta_3 q_1 + q_4) = q_2 (q_3 + \beta_3 q_6) \quad \text{and} \quad q_0 (2 q_4 - \beta_3 q_1) = q_7 (2q_3 - \beta_3 q_6),\]
which lead to
\[q_5 = \frac{q_3 + \beta_3 q_6}{\lambda_1} \quad \text{and} \quad q_2 = \frac{\beta_3 q_1 + q_4}{\lambda_1},\]
\[q_0 = \frac{2 q_3 - \beta_3 q_6}{\lambda_2} \quad \text{and} \quad q_7 = \frac{2 q_4 - \beta_3 q_1}{\lambda_2}.\]
Using the (\ref{3diff2}) we obtain
\begin{align*} 1 & = q_0 q_4 - q_3 q_7 \\
& = \frac{2 q_3 - \beta_3 q_6}{\lambda_2} q_4 - q_3 \frac{2 q_4 - \beta_3 q_1}{\lambda_2} \\
& = \frac{\beta_3 (q_1 q_3 - q_4 q_6)}{\lambda_2} = \frac{\beta_3(u_{5,2} - u_{2,5})}{\lambda_2}.
\end{align*} 
It follows that 
\[\lambda_2 = 3\beta_3/\delta_2.\]
Similarly, we find that
\begin{align*} 1 & = q_1 q_5 - q_2 q_6 \\
& = q_1 \frac{q_3 + \beta_3 q_6}{\lambda_1} - q_6 \frac{\beta_3 q_1 + q_4}{\lambda_1} \\
& = \frac{(q_1 q_3 - q_4 q_6)}{\lambda_1} = \frac{(u_{5,2} - u_{2,5})}{\lambda_1},
\end{align*} 
and we conclude that $\lambda_1 = 3/\delta_2$. This leads to: 

\begin{equation} \label{tripara2} \begin{matrix} (3  /\delta_2) q_0 & = & (2/\beta_3) q_3 -  q_6, & (3  /\delta_2) q_2 & = & \beta_3 q_1 + q_4, \\ \\
(3  /\delta_2) q_5 & = & q_3 + \beta_3 q_6, & (3  /\delta_2) q_7 & = & (2/\beta_3) q_4 - \beta_3 q_1. 
\end{matrix} 
\end{equation}
This then gives:
\begin{equation} \label{B4qsec} \begin{matrix} (3/\delta_2) r_0 r_1 & = & (2/\beta_3) r_6 r_7 -  r_{12} r_{13}, & (3/\delta_2) r_4 r_5 & = & \beta_3 r_2 r_3 + r_8 r_9, \\ \\
(3 /\delta_2) r_{10} r_{11} & = & r_6 r_7 + \beta_3 r_{12} r_{13}, & (3 /\delta_2)  r_{14} r_{15} & = & (2/\beta_3) r_8 r_9 -  r_2 r_3.
\end{matrix} 
\end{equation}
Combining with (\ref{B4qfir}) we obtain the skew-symmetric matrix
\small
\begin{equation} \label{B4pfaff}
\begin{bmatrix} 0 & & \beta_1 r_2 r_7 & & - (3/\delta_2) r_1 r_{11} & & \beta_1 \beta_3 r_2 r_{13} - (3/\delta_2) r_4 r_{11} \\ \\ 
-\beta_1 r_2 r_7 & & 0 & & \beta_1 (2/\beta_3)  r_7 r_8 + (3/\delta_2) r_1 r_{14} & & -  (3/\delta_2) r_4 r_{14} \\ \\ 
 (3 /\delta_2) r_1 r_{11} & & - (3/\delta_2) r_1 r_{14} - \beta_1 (2/\beta_3)  r_7 r_8 & & 0 & &  \beta_1  r_8 r_{13} \\ \\   
 - \beta_1 \beta_3 r_2 r_{13} + (3/\delta_2) r_4 r_{11} & & (3/\delta_2) r_4 r_{14} & & - \beta_1 r_8 r_{13} & & 0   \end{bmatrix},
\end{equation} 
\normalsize
which must be singular. Together with the lexicon (\ref{b4lex}) we obtain the Pfaffian equation
\begin{equation} \label{pfaffeqB4}  s_1 s_9 s_{22} s_{30} (  (3/\delta_2)(2/\beta_1)  s_4 s_{12} s_{19} s_{27} -  \beta_3 s_2 s_{10} s_{21} s_{29}  ) =     s_7 s_{15} s_{16} s_{24} (  \beta_1 \delta_2 s_2 s_{10} s_{21} s_{29} - (2/\beta_3) s_4 s_{12} s_{19} s_{27})   .
\end{equation} 

Since this instance is slightly different, we give another statement similar to Theorem \ref{Buchquadthm} and Proposition \ref{quadprop} for clarity. 

\begin{lemma}[Calibration Lemma for $\B^{(4)}$] \label{b4cal} Let $\Br = (r_0, \cdots, r_{15}) \in \bZ^{16}$ be a sequence of positive integers satisfying (\ref{b4quads}) and such that the matrix in (\ref{B4pfaff}) is singular. Further, suppose that $\Bs$, with structural constants given by (\ref{quin}), (\ref{con1}), (\ref{con2}), (\ref{con3}), and (\ref{gamma}) is related to $\Br$ via (\ref{b4lex}). Then the following equations hold:  
\begin{equation} \label{b4eqs} 
\begin{matrix} r_{13} & = & \dfrac{(2/\beta_3) r_4 r_{11} + \gamma r_1 r_{14}}{\beta_1 \delta_2 r_2}, & r_8 & = & \dfrac{(4/\gamma) r_4 r_{11} - \beta_3 r_1 r_{14}}{\beta_1 \delta_2 r_7}, \\ \\
r_0 & = & \dfrac{2 r_9 r_{11} - \beta_3 \gamma r_{12} r_{14}}{\beta_1 \beta_3 r_2}, & r_5 & = & \dfrac{(4/\gamma) r_9 r_{11} - \beta_3 r_{12} r_{14}}{\beta_1 r_7}, \\ \\
r_{10} & = & \dfrac{r_{4} r_{12} + r_1 r_9}{\beta_1 r_2}, & r_{15} & = & \dfrac{r_4 r_{12} - r_1 r_9}{\beta_1 r_7}, \\ \\
r_3 & = & \dfrac{(r_1 r_{14} + (4/\gamma)(2/\beta_3) r_4 r_{11})r_9 - 3 r_4 r_{14} r_{12}}{\beta_1 \delta_2 r_2 r_7}, & r_6 & = & \dfrac{3 r_1 r_{11} r_9 + (r_4 r_{11} - \beta_3 \gamma r_1 r_{14})r_{12}}{\beta_1 \delta_2 r_2 r_7}.
\end{matrix}
\end{equation}
\end{lemma}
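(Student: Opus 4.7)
The plan is to follow the template of Section \ref{quadsec}, making the appropriate substitutions to account for the structural constants that differ because $\B^{(4)}$ skips the pair $(x_4, y_4)$. First, apply Proposition \ref{tri-p} directly to the triple $(x_1,y_1),(x_2,y_2),(x_3,y_3)$: these three pairs sit at consecutive positions in the full quintuple, so the standard unit-increment relations (\ref{con1}) hold and the ``jump'' second-difference constant is $\beta_1$. This immediately produces the four identities in (\ref{B4qfir}). Next, for the triple $(x_2,y_2),(x_3,y_3),(x_5,y_5)$ I would rederive the triple parametrization from scratch, as in the displayed calculation in Section \ref{tripsec}, but with the altered constants: the top step $(x_5+y_5)-(x_3+y_3)=2$ factors as $(u_{5,3}-u_{3,5})(v_{5,3}-w_{5,3})=\beta_3\cdot(2/\beta_3)$, while the overall step $(x_5+y_5)-(x_2+y_2)=3$ factors as $(u_{5,2}-u_{2,5})(v_{5,2}-w_{5,2})=(3/\delta_2)\cdot\delta_2$. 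Running the $\lambda_1,\lambda_2$ argument with these inputs yields (\ref{tripara2}), which in the $r_j$-variables becomes (\ref{B4qsec}).

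Combining (\ref{B4qfir}) and (\ref{B4qsec}) linearly eliminates $r_0,r_5,r_{10},r_{15},r_8,r_{13}$ and leaves a homogeneous $4\times 4$ skew-symmetric linear system in the column vector $(r_3,r_6,r_9,r_{12})^{T}$ whose matrix is precisely (\ref{B4pfaff}). The hypothesized singularity of this matrix is equivalent to the vanishing of its Pfaffian, which is (\ref{pfaffeqB4}). Rewriting the Pfaffian as the equality of two ratios, as in (\ref{4bcalc}), naturally produces an intrinsic integer
\[\delta_\star \;=\; \frac{\beta_1\delta_2\, r_2 r_{13}-(2/\beta_3)\, r_4 r_{11}}{r_1 r_{14}} \;=\; \frac{(4/\gamma)\,r_4 r_{11}-\beta_3\, r_1 r_{14}}{r_7 r_8},\]
where the second equality would be chosen so that it follows tautologically from (\ref{pfaffeqB4}). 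Solving for $r_{13}$ from the first ratio and for $r_8$ by substituting back into the defining equation of $\delta_\star$ yields the first two entries of (\ref{b4eqs}), provided $\delta_\star=\gamma$. The equations for $r_{10}$ and $r_{15}$ then come directly from the last two lines of (\ref{B4qfir}), the equations for $r_0$ and $r_5$ come from the first two lines of (\ref{B4qfir}) after substituting in the formulas just derived for $r_{13},r_8$, and the equations for $r_3,r_6$ come from the rank-$2$ kernel of the skew-symmetric matrix (\ref{B4pfaff}) in the same way (\ref{s3s6sub}) is extracted in the quadruple case.

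The main obstacle, as in Proposition \ref{quadprop}, is the calibration step: proving that the intrinsic constant $\delta_\star$ actually equals the externally defined $\gamma=v_{5,1}-w_{5,1}$. The computation of $\delta_\star$ only uses the Pfaffian identity, while $\gamma$ is defined at the level of the full quintuple $\Bs$ via (\ref{gamma}). To bridge them, I would use the lexicon (\ref{b4lex}) to rewrite $\delta_\star$ as a rational function in the $s_j$'s, and then verify by direct symbolic computation (entirely analogous to the calculation $v_{1,4}-w_{1,4}=\delta(u_{2,1}-u_{1,2})$ in the proof of Proposition \ref{quadprop}) that this rational function computes $v_{1,5}-w_{1,5}$ on the parametrizing sequence, hence equals $\gamma$ by (\ref{gamma}) and the symmetry $v_{i,j}=v_{j,i}$. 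Once this identification is in hand, the remaining substitutions in (\ref{b4eqs}) reduce to the routine algebraic manipulations performed in (\ref{s13})--(\ref{s0s15}).
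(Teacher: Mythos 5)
Your proposal follows essentially the same route as the paper: derive (\ref{B4qfir}) and (\ref{B4qsec}) from the modified triple relations, extract the Pfaffian (\ref{pfaffeqB4}) from singularity of (\ref{B4pfaff}), use the coprimality of $r_1 r_{14}$ and $r_7 r_8$ to define an integer parameter from the two ratios, solve for $r_{13}, r_8$ and then the remaining entries via the rank-$2$ kernel, and finally calibrate the parameter against $\gamma = v_{1,5}-w_{1,5}$ by the same symbolic computation as in Proposition \ref{quadprop}. The only blemish is your provisional display for $\delta_\star$: the second fraction $((4/\gamma)r_4r_{11}-\beta_3 r_1r_{14})/(r_7r_8)$ both circularly contains $\gamma$ and in fact evaluates to $\beta_1\delta_2$ rather than to $\delta_\star$; the correct intrinsic definition reads off both numerators directly from (\ref{pfaffeqB4}), namely $\delta_\star = (\beta_1\delta_2 r_2r_{13}-(2/\beta_3)r_4r_{11})/(r_1r_{14}) = ((3/\delta_2)(2/\beta_1)r_4r_{11}-\beta_3 r_2r_{13})/(r_7r_8)$, after which your ``solve for $r_{13}$, substitute back for $r_8$'' step produces the stated $(4/\delta_\star)$ coefficient exactly as you describe.
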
 

\begin{proof} For convenience, we write out (\ref{pfaffeqB4}) in terms of $\Br$, yielding 
\begin{equation} \label{pfaffeqb4} r_1 r_{14} ((3/\delta_2)(2/\beta_1) r_4 r_{11} - \beta_3 r_2 r_{13}) = r_7 r_8 (\beta_1 \delta_2 r_2 r_{13} - (2/\beta_3) r_4 r_{11}).
\end{equation}
We then proceed, as in Section \ref{quadsec}, to introduce a parameter $\lambda$ as in (\ref{4bcalc}) to obtain 
\[\lambda = \frac{(3/\delta_2)(2/\beta_1) r_4 r_{11} - r_2 r_{13}}{r_7 r_8} = \frac{\beta_1 \delta_2 r_2 r_{13} - (2/\beta_3) r_4 r_{11}}{r_1 r_{14}} \in \bZ. \]
The equations in (\ref{b4eqs}) then follow with $\lambda$ in place of $\gamma$. Finally, to confirm that $\lambda = \gamma$, we compute 
\[1 = u_{2,1} - u_{1,2} = r_1 r_5 r_9 r_{13} - r_2 r_6 r_{10} r_{14} \quad \text{and} \quad \gamma = v_{1,5} - w_{1,5} = r_0 r_2 r_4 r_6 - r_9 r_{11} r_{13} r_{15}   \]
using the expressions in (\ref{b4eqs}). We obtain expressions similar to the ones obtained in Proposition \ref{quadprop}, so we will not repeat the calculation here. We do obtain that the quotient 
\begin{equation} \label{lamgam} \lambda = \frac{v_{1,5} - w_{1,5}}{u_{2,1} - u_{1,2}} = \frac{\gamma}{1}.
\end{equation}
This shows that we must have $\lambda = \gamma$. This completes the proof. 
\end{proof} 

Lemma \ref{b4cal} and (\ref{b4lex}) imply 
\begin{equation} \label{b4caleq}
\gamma (s_1 s_{30})(s_{30})(s_9 s_{22}) = \beta_1 \delta_2 (s_2 s_{29})(s_{10} s_{21}) - (2/\beta_3) (s_4 s_{27})(s_{12} s_{19}),\end{equation}
\[\gamma (s_{15} s_{16})(s_7 s_{24}) = (3/\delta_2)(2/\beta_1) (s_4 s_{27})(s_{12} s_{19}) - \beta_3 (s_2 s_{21})(s_{10} s_{21}). \]

\subsection{The quadruple $\B^{(2)}$}
\label{B2par}  

To deal with $\B^{(2)}$, we use a symmetrical approach as with $\B^{(4)}$. We begin with
\begin{equation} \label{B2quads} \begin{matrix} x_1 & = & r_0 r_2 r_4 r_6 r_8 r_{10} r_{12} r_{14}, & y_1 & = & r_1 r_3 r_5 r_7 r_9 r_{11} r_{13} r_{15}, \\
x_3 & = & r_0 r_1 r_4 r_5 r_8 r_9 r_{12} r_{13}, & y_3 & = & r_2 r_3 r_6 r_7 r_{10} r_{11} r_{14} s_{15}, \\
x_4 & = & r_0 r_1 r_2 r_3 r_8 r_9 r_{10} r_{11}, & y_4 & = & r_4 r_5 r_6 r_7 r_{12} r_{13} r_{14} r_{15}, \\
x_5 & = & r_0 r_1 r_2 r_3 r_4 r_5 r_6 r_7, & y_5 & = & r_8 r_9 r_{10} r_{11} r_{12} r_{13} r_{14} r_{15}. 
\end{matrix}
\end{equation}

Applying Proposition \ref{tri-p} to the last three lines we obtain the relations 
\begin{equation} \label{B2qsec} \begin{matrix} \beta_3 r_0 r_1 & = & r_6 r_7 - r_{12} r_{13}, & \beta_3 r_4 r_5 & = & r_2 r_3 + r_8 r_9, \\
\beta_3 r_{10} r_{11} & = & r_6 r_7 + r_{12} r_{13}, & \beta_3 r_{14} r_{15} & = & r_8 r_9 - r_2 r_3.
\end{matrix} 
\end{equation}
Again, we cannot apply Proposition \ref{tri-p} to the first three lines, since (\ref{3diff}) no longer holds. We replace (\ref{3diff}) with
\begin{equation} \label{3diff3} \begin{matrix} \beta_1 & = & u_{3,1}  -  u_{1,3} & = & q_1 q_5 - q_2 q_6, \\
2/\beta_1 & = & v_{1,3} - w_{1,3} & = & q_0 q_4 - q_3 q_7, \\
1 & = & u_{4,3} - u_{3,4} & = & q_2 q_3 - q_4 q_5, \\
1 & = & v_{4,3} - w_{4,3} & = & q_0 q_1 - q_6 q_7.
\end{matrix} 
\end{equation}
This leads to 
\begin{equation} \label{tripara3} \begin{matrix} (3/\delta_1) q_0 & = & -(2/\beta_1) q_6 + \beta_1 q_3 , & (3/\delta_1) q_2 & = & q_1 + \beta_1 q_4, \\
(3  /\delta_1) q_5 & = & \beta_1 q_3 + q_6, & (3 /\delta_1) q_7 & = & -(2/\beta_1) q_1 + \beta_1 q_4 . 
\end{matrix} 
\end{equation}
Applying these equations to $\Br$ we obtain:
\begin{equation} \label{B2qfir} \begin{matrix} (3 /\delta_1) r_0 r_8 & = &   -(2/\beta_1) r_6 r_{14} +  r_3 r_{11}, & (3/\delta_1) r_2 r_{10} & = & r_1 r_9 + \beta_1 r_4 r_{12}, \\
(3 /\delta_1) r_5 r_{13} & = & \beta_1 r_3 r_{11} + r_6 r_{14}, & (3/\delta_1) r_7 r_{15} & = & -(2/\beta_1) r_1 r_9 +  r_4 r_{12} . 
\end{matrix}
\end{equation}
From (\ref{B2qsec}) and (\ref{B2qfir}) we obtain the skew-symmetric system of equations:
\begin{equation} \label{pfaffB2} 
\begin{bmatrix} 0 & (3/\delta_1) r_2 r_7 & - \beta_3 r_1 r_{11} & (3/\delta_1) r_2 r_{13} - \beta_1 \beta_3 r_4 r_{11} \\ \\ 
-(3/\delta_1) r_2 r_7 & 0 & (3/\delta_1) r_7 r_8 + (2/\beta_1) \beta_3 r_{1} r_{14} &  - \beta_3 r_4 r_{14} \\ \\ 
 \beta_3 r_1 r_{11} & -(3/\delta_1) r_7 r_8 - (2/\beta_1) \beta_3 r_1 r_{14} & 0 &  (3/\delta_1) r_8 r_{13} \\ \\
\ \beta_3 r_4 r_{11} - (3/\delta_1) r_2 r_{13} & \beta_3 r_4 r_{14} & - (3/\delta_1) r_8 r_{13} & 0 
\end{bmatrix} 
\end{equation}





Requiring (\ref{pfaffB2}) to be singular, and applying the lexicon (\ref{b2lex}) give the Pfaffian equation
\begin{equation} \label{pfaffeqB2} 
 s_1 s_3 s_{28} s_{30} ((2/\beta_3) s_4 s_6 s_{25} s_{27} - \beta_1 \delta_1 s_8 s_{10} s_{21} s_{23} ) = s_{13} s_{15} s_{16} s_{18} (\beta_3 s_8 s_{10} s_{21} s_{23} - (2/\beta_1)(3/\delta_1) s_{4} s_{6} s_{25} s_{27}).
\end{equation}
The calibration lemma, similar to Lemma \ref{b4cal}, is given by:

\begin{lemma}[Calibration Lemma for $\B^{(2)}$] \label{b2cal} Let $\Br = (r_0, \cdots, r_{15}) \in \bZ^{16}$ be a sequence of positive integers satisfying (\ref{B2quads}) and such that the matrix in (\ref{pfaffB2}) is singular. Further, suppose that $\Bs$, with structural constants given by (\ref{quin}), (\ref{con1}), (\ref{con2}), (\ref{con3}), and (\ref{gamma}) is related to $\Br$ via (\ref{b2lex}). Then the following equations hold:  
\begin{equation} \label{b2eqs} 
\begin{matrix} r_{13} & = & \dfrac{\beta_1 r_4 r_{11} + \gamma r_1 r_{14}}{(2/\beta_3)(3/ \delta_1) r_2}, & r_8 & = & \dfrac{ (4/\gamma) r_4 r_{11} - (2/\beta_1) r_1 r_{14}}{ (2/\beta_3)(3/ \delta_1) r_7}, \\ \\
r_0 & = & \dfrac{ r_9 r_{11} -  \gamma r_{12} r_{14}}{(3/\delta_1) r_2}, & r_5 & = & \dfrac{(4/\gamma) r_9 r_{11} -  r_{12} r_{14}}{(3/\delta_1) r_7}, \\ \\
r_{10} & = & \dfrac{\beta_1 r_{4} r_{12} + r_1 r_9}{ r_2}, & r_{15} & = & \dfrac{r_4 r_{12} - (2/\beta_1) r_1 r_9}{ r_7}, \\ \\
r_3 & = & \dfrac{((2/\beta_1) r_1 r_{14} + (4/\gamma) r_4 r_{11})r_9 - 2  r_4 r_{14} r_{12}}{ (2/\beta_3) (3/\delta_1) r_2 r_7}, & r_6 & = & \dfrac{2 r_1 r_{11} r_9 +  (\beta_1 r_4 r_{11} -  \gamma r_1 r_{14})r_{12}}{(2/\beta_3) (3/\delta_1) r_2 r_7}.
\end{matrix}
\end{equation}
\end{lemma}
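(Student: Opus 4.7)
The plan is to follow the exact same template used for Lemma \ref{b4cal}, which is in turn modeled on Section \ref{quadsec}, but with the structural constants shifted to match the triples $(x_1,x_3,x_4)$ and $(x_3,x_4,x_5)$ that remain after deleting the pair $(x_2,y_2)$. Concretely, (\ref{B2qsec}) comes from applying Proposition \ref{tri-p} directly to $(x_3,x_4,x_5)$, while (\ref{B2qfir}) is the consequence of the modified relations (\ref{3diff3}) for $(x_1,x_3,x_4)$ since the inner index gap is no longer $1$.

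First I would note that singularity of the skew-symmetric matrix in (\ref{pfaffB2}) is equivalent to vanishing of its Pfaffian, which after expansion is precisely (\ref{pfaffeqB2}) rewritten in $\Br$:
\[
r_1 r_{14}\bigl((2/\beta_3) r_4 r_{11} - \beta_1 \delta_1 r_2 r_{13}\bigr) \;=\; r_7 r_8 \bigl(\beta_3 r_2 r_{13} - (2/\beta_1)(3/\delta_1) r_4 r_{11}\bigr).
\]
Using the same gcd argument as in (\ref{gcd1}) (which still applies because the co-primality conditions $\gcd(r_1, r_7) = \gcd(r_1, r_{14}) = \gcd(r_7, r_8) = \gcd(r_8, r_{14}) = 1$ are preserved, as they follow from $u_{j+1,j}-u_{j,j+1}=v_{j,j+1}-w_{j,j+1}=1$ for $j$'s adjacent to the retained pairs), we introduce the integer
\[
\lambda \;=\; \frac{(2/\beta_3) r_4 r_{11} - r_2 r_{13}}{r_7 r_8} \;=\; \frac{\beta_1 \delta_1 r_2 r_{13} - (2/\beta_1)(3/\delta_1) r_4 r_{11}}{r_1 r_{14}} \in \bZ.
\]

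Next I would solve for $r_{13}, r_8$ algebraically from these two expressions, exactly as in (\ref{s13}) and (\ref{s8}), obtaining the first row of (\ref{b2eqs}) with $\lambda$ in place of $\gamma$. Because the rank of the skew-symmetric $4\times 4$ matrix drops to $2$ once the Pfaffian vanishes, the two-parameter null space is spanned by vectors giving $r_3, r_6$ in terms of $r_9, r_{12}$ (and the other constants); a direct substitution into (\ref{pfaffB2}), mimicking (\ref{s3s6sub}), yields the bottom row of (\ref{b2eqs}). Finally, plugging these formulas for $r_3, r_6, r_8, r_{13}$ into (\ref{B2qsec}) and (\ref{B2qfir}) and solving yields the expressions for $r_0, r_5, r_{10}, r_{15}$ exactly as in (\ref{s0s15}).

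The remaining, and only non-formal, step is the identification $\lambda = \gamma$. For this I would compute $u_{2,1}-u_{1,2}=r_1 r_5 r_9 r_{13}-r_2 r_6 r_{10} r_{14}$ and $v_{1,5}-w_{1,5}=r_0 r_2 r_4 r_6 - r_9 r_{11} r_{13} r_{15}$ after substituting the formulas above, obtaining in each case a single rational expression in $r_1,r_2,r_4,r_7,r_9,r_{11},r_{12},r_{14}$ and the structural constants. The first must equal $1$ by hypothesis on the quintuple, while the quotient $(v_{1,5}-w_{1,5})/(u_{2,1}-u_{1,2})$ will again factor so cleanly that all $r_j$-dependence cancels and the ratio collapses to $\lambda/1$; compared against (\ref{gamma}) this forces $\lambda=\gamma$. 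The main obstacle is really this last bookkeeping: the constants $\beta_1,\beta_3,\delta_1$ appear in different positions than in the $\B^{(4)}$ calculation, and one must take care that the symmetry between $\beta_1\leftrightarrow 2/\beta_1$ and $\beta_3\leftrightarrow 2/\beta_3$ in (\ref{3diff3}) is tracked correctly so that the final ratio is a bare $\gamma$ rather than a dressed multiple of it. Everything else is a mechanical transcription of the argument that proved Lemma \ref{b4cal}.
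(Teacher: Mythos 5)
Your proposal follows exactly the route the paper takes: the paper's proof of Lemma \ref{b2cal} is literally ``Same as the proof of Lemma \ref{b4cal}'', and your argument is a faithful adaptation of that template (vanishing Pfaffian, coprimality of $r_1r_{14}$ and $r_7r_8$ to get an integer $\lambda$, solving the rank-$2$ skew-symmetric system for $r_3,r_6$ and back-substituting for $r_0,r_5,r_{10},r_{15}$, then calibrating $\lambda=\gamma$ via the ratio $(v_{1,5}-w_{1,5})/(u_{2,1}-u_{1,2})$). The only blemish is a transcription slip in your $\Br$-form of (\ref{pfaffeqB2}) (the roles of $r_2r_{13}$ and $r_4r_{11}$ are interchanged relative to what the lexicon (\ref{b2lex}) literally yields, and your displayed $\lambda$ is not quite consistent with your own displayed Pfaffian), but this is bookkeeping of the same character as in the paper's own treatment of $\B^{(4)}$ and does not affect the method.
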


\begin{proof} Same as the proof of Lemma \ref{b4cal}
\end{proof}

Lemma \ref{b2cal} gives:

\begin{equation} \label{b2caleq} \gamma  (s_1 s_{30})(s_3 s_{28}) = (2/\beta_1)(3/\delta_1) (s_4 s_{27})(s_6 s_{25}) - \beta_3 (s_8 s_{23})(s_{10} s_{21}), \end{equation}
\[\gamma  (s_{15} s_{16})(s_{13} s_{18}) = \beta_1 \delta_1 (s_8 s_{23})(s_{10} s_{21}) - (2/\beta_3) (s_4 s_{27})(s_6 s_{25}). \]

\subsection{The quadruple $\B^{(3)}$}
\label{B3par} 

Next we deal with $\B^{(3)}$, given by 
\begin{equation} \label{B3quads} \begin{matrix} x_1 & = & r_0 r_2 r_4 r_6 r_8 r_{10} r_{12} r_{14}, & y_1 & = & r_1 r_3 r_5 r_7 r_9 r_{11} r_{13} r_{15}, \\
x_2 & = & r_0 r_1 r_4 r_5 r_8 r_9 r_{12} r_{13}, & y_2 & = & r_2 r_3 r_6 r_7 r_{10} r_{11} r_{14} s_{15}, \\
x_4 & = & r_0 r_1 r_2 r_3 r_8 r_9 r_{10} r_{11}, & y_4 & = & r_4 r_5 r_6 r_7 r_{12} r_{13} r_{14} r_{15}, \\
x_5 & = & r_0 r_1 r_2 r_3 r_4 r_5 r_6 r_7, & y_5 & = & r_8 r_9 r_{10} r_{11} r_{12} r_{13} r_{14} r_{15}. 
\end{matrix}
\end{equation}

We then have
\begin{equation} \label{3diffB31} \begin{matrix} 
1 & = & u_{2,1} - u_{1,2} & = & r_1 r_9 r_5 r_{13} - r_2 r_{6} r_{10} r_{14}, \\
1 & = & v_{2,1} - w_{2,1} & = & r_0 r_4 r_8 r_{12} - r_3 r_7 r_{11} r_{15}, \\
\beta_2 & = & u_{4,2} - u_{2,4} & = & r_2 r_3 r_{10} r_{11} - r_4 r_5 r_{12} r_{13}, \\
2/\beta_2 & = & v_{4,2} - w_{4,2} & = & r_0 r_1 r_8 r_9 - r_6 r_7 r_{14} r_{15}.
\end{matrix} 
\end{equation}
and
\begin{equation} \label{3diffB32} \begin{matrix} 
\beta_2 & = & u_{4,2} - u_{2,4} & = & r_2 r_3 r_{10} r_{11} - r_4 r_5 r_{12} r_{13}, \\
2/\beta_2 & = & v_{2,4} - w_{2,4} & = & r_0 r_1 r_8 r_9 - r_6 r_7 r_{14} r_{15}, \\
1 & = & u_{5,4} - u_{4,5} & = & r_4 r_5 r_6 r_7 - r_8 r_9 r_{10} r_{11}, \\
1 & = & v_{5,4} - w_{5,4} & = & r_0 r_1 r_2 r_3 - r_{12} r_{13} r_{14} r_{15}.
\end{matrix} 
\end{equation}
Combined, these give:
\begin{equation} \label{triparaB31} \begin{matrix} (3/\delta_1) r_0 r_8 & = & (2/\beta_2) r_3 r_{11} - r_6 r_{14}, & (3/\delta_1) r_2 r_{10} & = & \beta_2 r_1 r_9 + r_4 r_{12}, \\ \\
(3/\delta_1) r_5 r_{13} & = & r_3 r_{11} + \beta_2 r_6 r_{14}, & (3/\delta_1) r_7 r_{15} & = & (2/\beta_2) r_4 r_{12} - r_1 r_9 
\end{matrix} 
\end{equation}
and
\begin{equation} \label{triparaB32} \begin{matrix} (3/\delta_2) r_0 r_1 & = & -(2/\beta_2) r_{12} r_{13} + r_6 r_7 , & (3/\delta_2) r_4 r_5 & = & r_2 r_3 + \beta_2 r_8 r_9, \\
(3/\delta_2) r_{10} r_{11} & = & \beta_2 r_6 r_7 + r_{12} r_{13}, & (3/\delta_2) r_{14} r_{15} & = & -(2/\beta_2) r_2 r_3 + r_8 r_9  . 
\end{matrix} 
\end{equation}
Equations (\ref{triparaB31}) and (\ref{triparaB32}) give a skew-symmetric matrix equation defined by the matrix:  
\begin{equation} \label{pfaffB3}
\begin{bmatrix}
0 & \delta_2  r_2 r_7 & - \delta_1  r_1 r_{11} & (2/\beta_2)(\delta_2 r_2 r_{13} - \delta_1 r_4 r_{11})/2 \\
- \delta_2  r_2 r_7 & 0 & \beta_2( \delta_2 r_7 r_8 + \delta_1 r_1 r_{14})/2 &  -\delta_1  r_4 r_{14} \\
 \delta_1  r_1 r_{11} & \beta_2(-\delta_1 r_1 r_{14} - \delta_2 r_7 r_8)/2 & 0 &  \delta_2  r_8 r_{13} \\
(2/\beta_2)(- \delta_2 r_2 r_{13} + \delta_1  r_4 r_{11})/2 & \delta_1  r_4 r_{14} & - \delta_2  r_8 r_{13}  & 0 
\end{bmatrix} 
\end{equation}
Requiring (\ref{pfaffB3}) to be singular and applying the lexicon (\ref{b3lex}) give the Pfaffian equation:

\begin{equation} \label{pfaffeqB3} 
s_1 s_5 s_{26} s_{30} (s_2 s_6 s_{25} s_{29} - \delta_1 (3/\delta_2) s_8 s_{12} s_{19} s_{23}) = s_{11} s_{15} s_{16} s_{20} (s_8 s_{12} s_{19} s_{23} - \delta_2 (3/\delta_1) s_2 s_6 s_{25} s_{29} ). 
\end{equation}

Again, we have a calibration lemma.

\begin{lemma}[Calibration Lemma for $\B^{(3)}$] \label{b3cal} Let $\Br = (r_0, \cdots, r_{15}) \in \bZ^{16}$ be a sequence of positive integers satisfying (\ref{B3quads}) and such that the matrix in (\ref{pfaffB3}) is singular. Further, suppose that $\Bs$, with structural constants given by (\ref{quin}), (\ref{con1}), (\ref{con2}), (\ref{con3}), and (\ref{gamma}) is related to $\Br$ via (\ref{b3lex}). Then the following equations hold:  
\begin{equation} \label{b2eqs} 
\begin{matrix} r_{13} & = & \dfrac{ r_4 r_{11} + \beta_2 \gamma r_1 r_{14}}{\delta_2 (3/ \delta_1) r_2}, & r_8 & = & \dfrac{ (2/\beta_2)(4/\gamma) r_4 r_{11} -  r_1 r_{14}}{  \delta_2 r_7}, \\ \\
r_0 & = & \dfrac{ r_9 r_{11} -  \gamma r_{12} r_{14}}{(3/\delta_1) r_2}, & r_5 & = & \dfrac{(4/\gamma) r_9 r_{11} -  r_{12} r_{14}}{(3/\delta_1) r_7}, \\ \\
r_{10} & = & \dfrac{ r_{4} r_{12} + \beta_2 r_1 r_9}{ (3/\delta_1) r_2}, & r_{15} & = & \dfrac{(2/\beta_2) r_4 r_{12} -  r_1 r_9}{(3/\delta_1) r_7}, \\ \\
r_3 & = & \dfrac{(\beta_2 r_1 r_{14} + (4/\gamma) r_4 r_{11})r_9 - 3  r_4 r_{14} r_{12}}{ \delta_2 (3/\delta_1) r_2 r_7}, & r_6 & = & \dfrac{3 r_1 r_{11} r_9 +  ((2/\beta_2) r_4 r_{11} -  \gamma r_1 r_{14})r_{12}}{\delta_2 (3/\delta_1) r_2 r_7}.
\end{matrix}
\end{equation}
\end{lemma}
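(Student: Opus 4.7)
The plan is to mirror the argument used for Proposition \ref{quadprop} in Section \ref{quadsec}, adapted in the same way that Lemma \ref{b4cal} and Lemma \ref{b2cal} adapt it, but with the structural constants appropriate to the quadruple $\B^{(3)} = \B \setminus \{(x_3, y_3)\}$. The starting point is the singularity of the skew-symmetric matrix (\ref{pfaffB3}), which is equivalent to the Pfaffian equation
\[
s_1 s_5 s_{26} s_{30}\bigl(s_2 s_6 s_{25} s_{29} - \delta_1 (3/\delta_2) s_8 s_{12} s_{19} s_{23}\bigr) = s_{11} s_{15} s_{16} s_{20} \bigl(s_8 s_{12} s_{19} s_{23} - \delta_2 (3/\delta_1) s_2 s_6 s_{25} s_{29}\bigr),
\]
equivalently, after translating to $\Br$ via the lexicon (\ref{b3lex}),
\[
r_1 r_{14}\bigl(\delta_2 r_2 r_{13} - \delta_1 r_4 r_{11}\bigr) \; = \; r_7 r_8 \bigl(\delta_2 (3/\delta_1) r_2 r_{13} - \delta_1 (3/\delta_2) r_4 r_{11}\bigr),
\]
(up to the appropriate $\beta_2$-factors that appear symmetrically in the two off-diagonal pairs of (\ref{pfaffB3})). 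The first step is to introduce a common quotient $\lambda$ for the two Pfaffian ratios, in direct analogy with (\ref{4bcalc}). A coprimality argument identical to the one following (\ref{gcd1}) — using that $u_{2,1} - u_{1,2} = v_{2,1} - w_{2,1} = 1$ and $u_{5,4} - u_{4,5} = v_{5,4} - w_{5,4} = 1$ from (\ref{3diffB31}) and (\ref{3diffB32}) — will ensure that $\gcd(r_1 r_{14}, r_7 r_8) = 1$, so that $\lambda \in \bZ$.

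Next, I would solve the two halves of the ratio for $r_{13}$ and $r_8$ in terms of $r_1, r_2, r_4, r_7, r_{11}, r_{14}$ and $\lambda$, producing the first two equations of (\ref{b2eqs}) with $\lambda$ in place of $\gamma$. This is the direct analogue of (\ref{s13}) and (\ref{s8}). Substituting these into the rank-two skew-symmetric system (\ref{pfaffB3}) then allows one to read off $r_3$ and $r_6$ from a pair of rows, exactly as in (\ref{s3s6sub}). Finally, feeding these four expressions back into (\ref{triparaB31}) and (\ref{triparaB32}) gives the remaining four expressions for $r_0, r_5, r_{10}, r_{15}$, analogous to (\ref{s0s15}). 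Since all of these are purely formal manipulations of the Pfaffian system, they are essentially the same computation as in Section \ref{quadsec}, merely with different numerical constants $\delta_1, \delta_2, \beta_2$ in place of $\beta_1, \beta_2$.

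The last step, identifying $\lambda$ with $\gamma$, proceeds as in Lemma \ref{b4cal}. Using the closed-form expressions for $r_0, r_3, r_5, r_6, r_8, r_{10}, r_{13}, r_{15}$, one computes the two quantities
\[
u_{2,1} - u_{1,2} = r_1 r_5 r_9 r_{13} - r_2 r_6 r_{10} r_{14} \quad \text{and} \quad v_{1,5} - w_{1,5} = r_0 r_2 r_4 r_6 - r_9 r_{11} r_{13} r_{15}
\]
symbolically. As in (\ref{lamgam}), the two resulting polynomials are expected to differ only by a factor of $\lambda$, so that their ratio is $\lambda$. Since by hypothesis $u_{2,1} - u_{1,2} = 1$ and $v_{1,5} - w_{1,5} = \gamma$, this forces $\lambda = \gamma$.

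The main obstacle I anticipate is bookkeeping rather than conceptual: the constants $\delta_1, \delta_2, \beta_2$ must be tracked carefully through each of the eight resolved $r_j$'s, and the $\beta_2$-factors appear asymmetrically between the $(0,3)$-entry and $(1,2)$-entry of (\ref{pfaffB3}). One should double-check that the final expressions in (\ref{b2eqs}) as stated in the lemma are the correct ones after this bookkeeping — in particular the placement of the $\delta_2, (3/\delta_1), (2/\beta_2)$ factors. Once this is verified, the identification $\lambda = \gamma$ via the ratio of $v_{1,5} - w_{1,5}$ to $u_{2,1} - u_{1,2}$ is forced by the same polynomial identity that appears in the proofs of Proposition \ref{quadprop}, Lemma \ref{b4cal}, and Lemma \ref{b2cal}, and the proof concludes.
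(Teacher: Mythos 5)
Your proposal is correct and follows essentially the same route as the paper: the paper's own proof of Lemma \ref{b3cal} is literally ``Same as the proof of Lemma \ref{b4cal}'', and your plan — translate the Pfaffian equation through the lexicon (\ref{b3lex}), introduce an integer quotient $\lambda$ via the coprimality argument, solve successively for $r_{13}, r_8$, then $r_3, r_6$ from the rank-two system, then $r_0, r_5, r_{10}, r_{15}$ from (\ref{triparaB31}) and (\ref{triparaB32}), and finally pin down $\lambda = \gamma$ from the ratio $(v_{1,5}-w_{1,5})/(u_{2,1}-u_{1,2})$ — is exactly the template of Lemma \ref{b4cal} and Proposition \ref{quadprop} that the paper invokes. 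The bookkeeping caution you raise is fair but does not constitute a gap.
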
 

\begin{proof} Same as the proof of Lemma \ref{b4cal}
\end{proof}

Lemma \ref{b3cal} gives the calibration

\begin{equation} \label{b3caleq} \beta_2 \gamma (s_1 s_{30})(s_5 s_{26}) = \delta_2(3/\delta_1) (s_2 s_{29})(s_6 s_{25}) - (s_{15} s_{16})(s_{11} s_{20}),\end{equation}
\[\beta_2 \gamma (s_{15} s_{16})(s_{11} s_{20}) = \delta_1(3/\delta_2) (s_8 s_{23})(s_{12} s_{19}) - (s_2 s_{29})(s_6 s_{25}).\] 

Putting the Pfaffian equations (\ref{b5caleq}), (\ref{b1caleq}), (\ref{b4caleq}), (\ref{b2caleq}), and (\ref{b3caleq}) together gives the so-called \emph{Structural Equations} of a quintuple of B\"{u}chi pairs, to be discussed in the next section. These Structural Equations constitute a system of bilinear equations. If $\Bs$ is to be a positive parametrizing sequence, then this system must be singular. It turns out that demanding such singularity requires some terms in $\Bs$ to be zero, and so a positive parametrizing sequence cannot exist. 

\section{The structural equations of a quintuple of B\"{u}chi pairs}
\label{mainquin} 

Consolidating (\ref{b5caleq}), (\ref{b1caleq}), (\ref{b4caleq}), (\ref{b2caleq}), and (\ref{b3caleq}), we obtain:

\begin{proposition}[Structural Equations of a quintuple of B\"{u}chi pairs]  \label{quinstruc}
Let $\B_5 = \{(x_j, y_j) : 1 \leq j \leq 5\}$ be an ordered, non-trivial quintuple of B\"{u}chi pairs, with parametrizing sequence $\Bs$ given by (\ref{quin}), with structural constants given by (\ref{con1}), (\ref{con2}), (\ref{con3}), and (\ref{gamma}). Then the terms of the parametrizing sequence satisfy the equations: 
\begin{equation} \label{pfaffquin} 
\begin{matrix} \delta_1  (s_{14} s_{17})(s_1 s_{30}) &= &  \beta_1 (2/\beta_2)(s_{13} s_{18})(s_2 s_{29}) & - & (s_{11} s_{20})(s_4 s_{27})   , \\
\delta_1 (s_7 s_{24})(s_8 s_{23}) & = & - (s_{13} s_{18}) (s_2 s_{29}) & + & \beta_2(2/\beta_1)(s_{11} s_{20})(s_4 s_{27}), \\ \\
\delta_2 (s_3 s_{28})(s_2 s_{29}) & = & -(s_9 s_{22})(s_8 s_{23}) & + & \beta_2 (2/\beta_3) (s_5 s_{26})(s_4 s_{27}), \\
\delta_2 (s_{14} s_{17})(s_{15} s_{16}) & = & \beta_3 (2/\beta_2) (s_9 s_{22})(s_8 s_{23}) & - & (s_5 s_{26})(s_4 s_{27}), \\ \\
\gamma (s_9 s_{22})(s_1 s_{30}) & = &  \beta_1 \delta_2 (s_{10} s_{21}) (s_2 s_{29}) & - & (2/\beta_3) (s_{12} s_{19})(s_4 s_{27}), \\
\gamma (s_7 s_{24})(s_{15} s_{16}) & = & -\beta_3 (s_{10} s_{21})(s_2 s_{29}) & + & (3/\delta_2)(2/\beta_1) (s_{12} s_{19})(s_4 s_{27}), \\ \\
 \gamma (s_3 s_{28})(s_1 s_{30}) & = & - \beta_3 (s_{10} s_{21})(s_8 s_{23}) & + & (3/\delta_1)(2/\beta_1) (s_6 s_{25})(s_4 s_{27}), \\
 \gamma (s_{13} s_{18})(s_{15} s_{16}) & = & \beta_1 \delta_1 (s_{10} s_{21})(s_8 s_{23}) & - & (2/\beta_3) (s_6 s_{25})(s_4 s_{27}), \\ \\
\beta_2 \gamma (s_5 s_{26})(s_1 s_{30}) & = &  \delta_2 (3/\delta_1) (s_6 s_{25})(s_2 s_{29}) & - & (s_{12} s_{19})(s_8 s_{23}), \\ 
\beta_2 \gamma (s_{11} s_{20})(s_{15} s_{16}) & = & -(s_6 s_{25})(s_2 s_{29}) & + & \delta_1 (3/\delta_2) (s_{12} s_{19})(s_8 s_{23}). 
\end{matrix} 
\end{equation}
\end{proposition}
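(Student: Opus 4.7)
The plan is to assemble Proposition \ref{quinstruc} by collecting the ten equations already produced, two at a time, from each of the five sub-quadruples $\B^{(i)}$ obtained by deleting the $i$-th pair from the quintuple $\B_5$. The conceptual point is that after deletion, each $\B^{(i)}$ is an ordered quadruple of pairs satisfying relations that are slightly weaker than those of a genuine quadruple of B\"{u}chi pairs: the consecutive differences $u_{j+1,j}-u_{j,j+1}$ and $v_{j,j+1}-w_{j,j+1}$ that are no longer strictly consecutive in the quintuple acquire precisely the structural constants $\beta_1,\beta_2,\beta_3$, $\delta_1,\delta_2$, and $\gamma$ introduced in (\ref{con2}), (\ref{con3}), and (\ref{gamma}). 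With this observation in hand, the proof reduces to invoking the machinery of Sections \ref{B5B1}, \ref{B4par}, \ref{B2par}, and \ref{B3par} and translating the resulting equations back to $\Bs$ via the lexicon Lemma \ref{lexlem}.

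First I would treat $\B^{(5)}$ and $\B^{(1)}$, which are closest in structure to an ordinary quadruple of B\"{u}chi pairs since only an extremal pair is removed. These cases were already handled in Section \ref{B5B1}, yielding the Pfaffian equations (\ref{pfaffeqB5}) and (\ref{pfaffeqB1}); the calibration arguments mirroring (\ref{4bcalc})–(\ref{s3s6sub}) then give exactly (\ref{b5caleq}) and (\ref{b1caleq}), which upon substituting $r_j^{(5)}=s_js_{j+16}$ and $r_j^{(1)}=s_{2j}s_{2j+1}$ become the first four displayed lines of (\ref{pfaffquin}). For $\B^{(4)}$, $\B^{(2)}$, and $\B^{(3)}$, Proposition \ref{tri-p} is no longer available verbatim, because removing an interior pair leaves one triple inside the quadruple whose constant second difference is $\beta_j$ rather than $1$. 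The derivations in (\ref{tripara2}), (\ref{tripara3}), and (\ref{triparaB31})–(\ref{triparaB32}) show how this promotes the auxiliary constant $\lambda$ from Section \ref{tripsec} to a more general parameter, while the calibration Lemmas \ref{b4cal}, \ref{b2cal}, \ref{b3cal} pin down the right-hand coefficient of the analogue of (\ref{pfaffeq2}) as $\gamma$, since in each of these three sub-quadruples the distinguished difference $v_{1,k}-w_{1,k}$ that appears in the calibration corresponds to $v_{1,5}-w_{1,5}=\gamma$ of the full quintuple. Applying the lexicon maps (\ref{b4lex}), (\ref{b2lex}), (\ref{b3lex}) then converts (\ref{b4caleq}), (\ref{b2caleq}), (\ref{b3caleq}) into the remaining six lines of (\ref{pfaffquin}).

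The principal bookkeeping obstacle is to confirm that the constants appearing in the Pfaffian equations are really the constants $\beta_j,\delta_j,\gamma$ attached to $\B_5$ and not separate constants attached to the individual $\B^{(i)}$. This is transparent once noted: for a fixed sub-quadruple, each of the quantities $u_{k,j}-u_{j,k}$ and $v_{j,k}-w_{j,k}$ depends only on the two pairs $(x_j,y_j)$ and $(x_k,y_k)$, which are shared between $\B_5$ and $\B^{(i)}$, so the numerical values from (\ref{con1})–(\ref{gamma}) transfer directly. A secondary check is to verify that Lemma \ref{ordlem2} confines each structural constant to the finite list in (\ref{strucon}), so that every ratio $2/\beta_j$, $3/\delta_j$, $4/\gamma$ appearing in (\ref{pfaffquin}) is a positive integer. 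Once these compatibility points are verified, the ten equations align exactly as displayed, completing the proof.
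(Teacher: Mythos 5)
Your proposal matches the paper's own derivation: Proposition \ref{quinstruc} is obtained there exactly by consolidating the calibrated Pfaffian relations (\ref{b5caleq}), (\ref{b1caleq}), (\ref{b4caleq}), (\ref{b2caleq}), and (\ref{b3caleq}) from the five sub-quadruples via the lexicon of Lemma \ref{lexlem}, with the calibration lemmas ensuring the constants are those of the ambient quintuple. Your observation that $u_{k,j}-u_{j,k}$ and $v_{j,k}-w_{j,k}$ depend only on the shared pairs is precisely the point that makes the transfer of structural constants automatic, so the argument is complete and essentially identical to the paper's.
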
 

Observe that the system (\ref{pfaffquin}) is actually expressible as bilinear equations in the variables
\[S_\heartsuit = \{t_1, t_2, t_4, t_8, t_{15}\} \quad \text{and} \quad S_\spadesuit = \{t_3, t_5, t_6, t_7, t_9, t_{10}, t_{11}, t_{12}, t_{13}, t_{14}\},\]
where 
\begin{equation} \label{tvar} t_j = s_j s_{31 - j}\end{equation}
for $j = 0,1, \cdots, 15$. In other words (\ref{pfaffquin}) can be viewed as a bilinear system given by ten equations of the form 
\begin{equation} \label{bilineq} \Bt_\heartsuit^T A_j \Bt_\spadesuit, \quad 1 \leq j \leq 10\end{equation} 
where the $A_j$'s are $5 \times 10$ integer matrices and 
\begin{equation} \label{tsuit} \Bt_\heartsuit = \begin{bmatrix} t_1  &  t_2 &  t_4  &  t_8 & t_{15}  \end{bmatrix}^T \quad \text{and} \quad \Bt_\spadesuit = \begin{bmatrix} t_3 & t_5  & t_6  & t_7  & t_9  & t_{10}  & t_{11}  & t_{12}  & t_{13}  & t_{14}  \end{bmatrix} ^T \end{equation}

By doing the left multiplications or the right multiplications in (\ref{bilineq}) first, we obtain a $10 \times 10$ matrix $M = M(\Bs)$ and a $10 \times 5$ matrix $\M = \M(\Bs)$, respectively. If $\Bs$ is to be a non-trivial parametrizing sequence, both $M$ and $\M$ need to be singular. \\

We proceed with analyzing the matrix $M$. 

\subsection{The $10 \times 10$ matrix $M$} 

Doing the left multiplication in (\ref{bilineq}) first gives a linear system determined by the following $10 \times 10$ matrix:

\begin{equation} \label{pfaff-fin0} M = 
\begin{bmatrix} 
0 & 0 & 0 & 0 & 0 & 0 & - t_4  & 0 & \beta_1 (\frac{2}{\beta_2}) t_2  & - \delta_1 t_{1}  \\
0 & 0 & 0 & \delta_1 t_8  & 0 & 0 & -\beta_2 (\frac{2}{\beta_1}) t_4  & 0 &  t_2  & 0 \\
-\delta_2 t_2  & \beta_2 (\frac{2}{\beta_3}) t_4  & 0 & 0 & - t_8  & 0 & 0 & 0 & 0 & 0 \\
0 & t_4  & 0 & 0 & - \beta_3 (\frac{2}{\beta_2}) t_8  & 0 & 0 & 0 & 0 &  \delta_2 t_{15}  \\
0 & 0 & 0 & 0 & \gamma t_1  & -\beta_1 \delta_2 t_2  & 0 & (\frac{2}{\beta_3}) t_4  & 0 & 0 \\
0 & 0 & 0 & \gamma t_{15}  & 0 & \beta_3 t_2  & 0 & -(\frac{3}{\delta_2})(\frac{2}{\beta_1}) t_4  & 0 & 0 \\
 \gamma t_1  & 0 & -(\frac{3}{\delta_1})(\frac{2}{\beta_1}) t_4  & 0 & 0 & \beta_3 t_8  & 0 & 0 & 0 & 0 \\
0 & 0 & (\frac{2}{\beta_3}) t_4  & 0 & 0 & - \beta_1 \delta_1 t_8  & 0 & 0 &  \gamma t_{15}  & 0 \\
0 & - \beta_2 \gamma t_1  & \delta_2 (\frac{3}{\delta_1}) t_2  & 0 & 0 & 0 & 0 & - t_8  & 0 & 0 \\
0 & 0 & t_2  & 0 & 0 & 0 & \beta_2 \gamma t_{15}  & - \delta_1 (\frac{3}{\delta_2}) t_8  & 0 & 0  
\end{bmatrix} 
\end{equation}

The matrix $M$ in (\ref{pfaff-fin0}) has vanishing determinant. By studying the properties of $M$, we obtain the following refinement of the structural knowledge of our quintuple of B\"{u}chi pairs $\B = \{(x_j, y_j) : 1 \leq j \leq 5\}$: 

\begin{proposition} \label{b1b3} Let $\B = \{(x_j, y_j) : 1 \leq j \leq 5\}$ be a non-trivial, ordered quintuple of B\"{u}chi pairs with structural constants given by (\ref{con1}), (\ref{con2}), (\ref{con3}), and (\ref{gamma}). Then we must have $\beta_1 = \beta_3$ and $\rank M \leq 7$. 
\end{proposition}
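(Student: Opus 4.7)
The plan is to exploit the fact that $\Bt_\spadesuit \in \bZ_{>0}^{10}$ (which is guaranteed by Proposition \ref{posseq}, since $t_j = s_j s_{31-j}$ with all $s_j > 0$) satisfies $M \Bt_\spadesuit = 0$. Because $\Bt_\spadesuit$ is a nonzero vector, $M$ must be singular, so $\det M = 0$. I would first verify this as a polynomial identity in $t_1,t_2,t_4,t_8,t_{15}$: the ten rows of $M$ come from five compatible Pfaffian equations attached to the sub-quadruples $\B^{(1)},\dots,\B^{(5)}$, and so they cannot be linearly independent --- this should be checked by direct symbolic expansion (assisted by computer algebra) and packaged as a baseline fact.

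To force $\beta_1 = \beta_3$, the idea is that if $\operatorname{rank} M = 9$, then up to scaling $\Bt_\spadesuit$ equals the cofactor vector $(C_1,\dots,C_{10})$ along any fixed row of $M$, and positivity forces all nonvanishing $C_j$ to share a common sign. I would compute a small, carefully chosen family of $9 \times 9$ minors --- exploiting the sparsity of $M$ visible in (\ref{pfaff-fin0}), where each row has only three or four nonzero entries --- and show that the resulting cofactors factor into terms in the structural constants multiplied by monomials in $t_1,t_2,t_4,t_8,t_{15}$. Given that $\beta_1,\beta_3 \in \{1,2\}$, I expect to identify two cofactors whose ratio contains a factor of $\beta_1 - \beta_3$ (or $\beta_1 \beta_3 - \beta_1^2$, equivalently $\beta_1(\beta_3 - \beta_1)$), and which must therefore be of opposite sign unless $\beta_1 = \beta_3$. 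Combined with the positivity constraint, this yields $\beta_1 = \beta_3$.

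Once $\beta_1 = \beta_3$ is in hand, I would substitute this into $M$ and observe that the rows associated with the $\B^{(5)}$-Pfaffian and the $\B^{(1)}$-Pfaffian (rows $1,2$ and $3,4$ of (\ref{pfaff-fin0})) acquire a symmetric shape under the swap $(t_1,t_2) \leftrightarrow (t_{15},t_8)$; similarly rows $5,6$ and $7,8$ become parallel under this involution. This built-in symmetry of $M$, which is dormant when $\beta_1 \neq \beta_3$, should produce two new linear dependencies beyond the one already present, pushing $\operatorname{rank} M$ down from $9$ to at most $7$. Concretely, I would exhibit explicit column or row operations that zero out a $3$-dimensional kernel directly, or equivalently show that all $8\times 8$ minors of $M$ vanish identically under the substitution $\beta_3 = \beta_1$.

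The principal obstacle is the symbolic bookkeeping: the $10\times 10$ matrix has parameters $\beta_1,\beta_2,\beta_3,\delta_1,\delta_2,\gamma$ in addition to the five ``heart'' variables, so blind expansion of $\det M$ or its cofactors produces unwieldy polynomials. The argument's efficiency depends on choosing the right small subcollection of minors --- typically ones supported on a block of $M$ coming from two adjacent sub-quadruples $\B^{(i)},\B^{(i+1)}$ --- so that the $\beta_1 - \beta_3$ factor appears cleanly. A secondary difficulty is ruling out spurious positive solutions arising from accidental simultaneous vanishing of several cofactors; this is handled by verifying that the remaining cofactors still exhibit sign conflict unless $\beta_1 = \beta_3$.
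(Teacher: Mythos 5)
Your overall frame is the right one---view $\Bt_\spadesuit$ as an all-positive null vector of $M$, note that at rank $9$ the kernel is spanned by a cofactor vector, and extract a contradiction from selected $9\times 9$ minors---but the specific mechanism you propose for forcing $\beta_1=\beta_3$ does not work. You want two cofactors whose ratio carries a factor of $\beta_1-\beta_3$ and hence are ``of opposite sign unless $\beta_1=\beta_3$.'' Since $\beta_1,\beta_3\in\{1,2\}$, when they differ the factor $\beta_1-\beta_3$ equals $+1$ or $-1$ depending on which is larger, so a linear factor cannot uniformly produce a sign conflict in both cases $(\beta_1,\beta_3)=(1,2)$ and $(2,1)$; you would need the ratio to be negative in both, and nothing in your plan guarantees that. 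In fact the relevant minor does not contain $\beta_1-\beta_3$ linearly: the paper computes $\det M_{12}=(2/\beta_1)(2/\beta_3)(\beta_1-\beta_3)^2\beta_2\gamma^3\delta_2^2\, t_1 t_{15}^3 t_2^2 t_4 t_8^2$, a \emph{square} of $\beta_1-\beta_3$ times a positive quantity, which signals that the operative dichotomy is vanishing versus non-vanishing rather than sign.

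The mechanism that actually closes the argument---and which your own setup already contains but which you pass over by restricting attention to the \emph{nonvanishing} cofactors---is that positivity of $\Bt_\spadesuit$ forces \emph{every} component of the spanning cofactor vector to be nonzero. The paper observes that $\det M_{11}=0$ identically, while $\det M_{12}\ne 0$ whenever $\beta_1\ne\beta_3$ and all $t_j>0$. Hence if $\beta_1\ne\beta_3$ the rank is exactly $9$, the kernel is one-dimensional, and its spanning vector has a zero entry (from $M_{11}$) alongside a nonzero entry (from $M_{12}$), which is incompatible with the strictly positive null vector $\Bt_\spadesuit$; no comparison of signs between two nonzero cofactors is needed. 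For the second assertion, your symmetry heuristic for why $\rank M$ should drop to $7$ once $\beta_3=\beta_1$ is plausible but unsubstantiated as written; the paper itself establishes $\rank M\le 7$ only by direct computation in SAGE, so if you want a conceptual route via the involution $(t_1,t_2)\leftrightarrow(t_{15},t_8)$ you would still have to exhibit the claimed extra linear dependencies explicitly rather than infer them from the shape of the rows.
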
 

\begin{proof} Note that $9 \times 9$  sub-matrix $M_{11}$, obtained by deleting the first row and first column of the matrix $M$, also has vanishing determinant. However the sub-matrix $M_{12}$ obtained by deleting the first row and second column has determinant equal to 
\begin{align*} \det M_{12}&  = (2/\beta_1)(2/\beta_3)(\beta_1 - \beta_3)^2 \beta_2  \gamma^3 \delta_2^2 (s_1 s_{30}) (s_{15} s_{16})^3 (s_2 s_{29})^2 (s_4 s_{27})( s_8 s_{23})^2 \\
& =(2/\beta_1)(2/\beta_3)(\beta_1 - \beta_3)^2 \beta_2  \gamma^3 \delta_2^2 t_1 t_{15}^3 t_2^2 t_4 t_8^2,
\end{align*}
Therefore, the matrix $M$ has rank $9$ for generic choices of $\beta_1, \beta_3$, and assuming $t_j \ne 0$ for $j = 0, \cdots, 15$.\\

If $\beta_3 \ne \beta_1$, so that the rank of $M$ is $9$, then we see that (\ref{pfaff-fin0}) does not give rise to a non-trivial B\"{u}chi quintuple: this is because for example $D_{11} = \det M_{11} = 0$, so the essentially unique null-vector of $M$ has a zero component, which implies that one of the $s_j$'s is equal to zero. Therefore, the rank of $M$ must be less than $9$. Using SAGE, we find that the rank drops only if $\beta_3 = \beta_1$. Once this happens, then the rank of $M$ drops to at most $7$, completing the proof. \end{proof} 

Further examination of $M$ does not seem to bear fruit, so we proceed to analyze the $10 \times 5$ matrix $\M$. 

\subsection{The $10 \times 5$ matrix $\M$} 

By Proposition \ref{b1b3}, we may assume that $\beta_1, \beta_3$ take the common value $\beta \in \{1,2\}$. \\

By doing the right multiplication first, the system (\ref{pfaffquin}) also implies the $10 \times 5$ linear system

\begin{equation} \label{pfaff-fin} 
\M \Bt_\heartsuit = 
\begin{bmatrix} -\delta_1 t_{14}  & \beta (2/\beta_2) t_{13}  & -t_{11}  & 0 & 0 \\
0 & t_{13}  & - (2/\beta) \beta_2 t_{11}  & \delta_1 t_7  & 0 \\
0 & \delta_2 t_3  & -(2/\beta) \beta_2 t_5  &  t_9  & 0 \\
0 & 0 & t_5  & - (2/\beta_2) \beta t_9  &  \delta_2 t_{14}  \\
\gamma t_9  & - \beta \delta_2 t_{10}  & (2/\beta) t_{12}  & 0 & 0 \\
0 & \beta t_{10}  & - (2/\beta) (3/\delta_2) t_{12}  & 0 &  \gamma t_7  \\
 \gamma t_3   & 0 & -(2/\beta) (3/\delta_1) t_6  &  \beta t_{10} & 0 \\
0 & 0 & (2/\beta) t_6  & - \beta\delta_1 t_{10} & -  \gamma t_{13}  \\
\beta_2 \gamma t_5  & -\delta_2 (3/\delta_1) t_6  & 0 &  t_{12}  & 0 \\
0 & t_6  & 0 & - \delta_1 (3/\delta_2) t_{12}  & \beta_2 \gamma t_{11}  
\end{bmatrix} \begin{bmatrix} t_1  \\ t_2  \\ t_4  \\ t_8  \\ t_{15}  
\end{bmatrix} = \begin{bmatrix} 0 \\ 0 \\ 0 \\ 0 \\ 0 \end{bmatrix}.
\end{equation}

In order for $\Bs$ to correspond to a non-trivial quintuple of B\"{u}chi pairs, the system (\ref{pfaff-fin}) must be singular. In other words, the coefficient matrix $\M$ must be singular, meaning it needs to have rank at most $4$. This would require all of the $5 \times 5$ sub-determinants to vanish. \\

Combining this and the same observation regarding $M$ in (\ref{pfaff-fin0}), we obtain:

\begin{proposition} \label{rankbd}  Let $\Bs$ be a positive parametrizing sequence for a non-trivial quintuple of B\"{u}chi pairs. Then the matrix $M = M(\Bs)$ given in (\ref{pfaff-fin0}) and $\M = \M(\Bs)$ in (\ref{pfaff-fin}) satisfy 
\begin{equation} \rank M \leq 7 \quad \text{and} \quad \rank \M \leq 4.
\end{equation}
\end{proposition}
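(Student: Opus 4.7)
My plan is to observe that Proposition \ref{rankbd} is essentially a bookkeeping consolidation of two rank bounds, one of which (the bound on $M$) has already been established as Proposition \ref{b1b3}, while the other (the bound on $\M$) is a one-line rank-nullity argument. The unifying input is that a \emph{positive} parametrizing sequence $\Bs \in \bZ_{>0}^{32}$ forces every $t_j = s_j s_{31-j}$ to be strictly positive, and in particular both $\Bt_\heartsuit \in \bZ_{>0}^5$ and $\Bt_\spadesuit \in \bZ_{>0}^{10}$ are non-zero vectors. The bilinear system (\ref{pfaffquin}), when viewed through its two ``adjoint'' reformulations, gives the simultaneous matrix equations $M(\Bs)\Bt_\spadesuit = 0$ and $\M(\Bs) \Bt_\heartsuit = 0$, so both kernels are non-trivial.

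For the inequality $\rank M \leq 7$, I would simply invoke Proposition \ref{b1b3}. The argument there proceeds in two stages: first, the relation $M\Bt_\spadesuit = 0$ with $\Bt_\spadesuit$ entrywise positive rules out $\rank M = 10$, and examination of the cofactor $\det M_{12}$ — which factors (up to a manifestly positive prefactor in the $t_j$'s) as a constant multiple of $(\beta_1 - \beta_3)^2$ — shows that $\rank M = 9$ would force some $t_j$ to vanish, contradicting positivity unless $\beta_1 = \beta_3$. Second, once $\beta_1 = \beta_3$ is imposed, a direct symbolic computation collapses the rank of $M$ to at most $7$. Thus no new work is needed for this half of the proposition.

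For the inequality $\rank \M \leq 4$, I would give a single-line rank-nullity argument. The matrix $\M$ is $10 \times 5$, and by the construction of (\ref{pfaff-fin}) as the right-factor reformulation of (\ref{pfaffquin}), the vector $\Bt_\heartsuit$ lies in $\ker \M$. Because $\Bs$ is positive, each of $t_1, t_2, t_4, t_8, t_{15}$ is strictly positive, so $\Bt_\heartsuit \neq 0$. Hence $\dim \ker \M \geq 1$, and the rank-nullity theorem applied to the $\bQ$-linear map $\bQ^5 \to \bQ^{10}$ represented by $\M$ yields $\rank \M \leq 4$.

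I do not anticipate any genuine obstacle in this proof; the content of Proposition \ref{rankbd} is really the statement that the hard work was done in Proposition \ref{b1b3} and in setting up the dual linear systems $M\Bt_\spadesuit = 0$ and $\M\Bt_\heartsuit = 0$. The only point requiring care is a clean invocation of Proposition \ref{posseq} to justify that $\Bt_\heartsuit$ and $\Bt_\spadesuit$ are non-zero, which is where the global hypothesis that the quintuple is non-trivial enters. The real work of the paper lies downstream, in exploiting $\rank \M \leq 4$ to extract and analyze the $\binom{10}{5} = 252$ vanishing $5 \times 5$ sub-determinants of $\M$.
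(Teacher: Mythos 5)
Your proposal is correct and follows essentially the same route as the paper: the bound $\rank M \leq 7$ is quoted directly from Proposition \ref{b1b3}, and the bound $\rank \M \leq 4$ follows because the positive vector $\Bt_\heartsuit$ lies in the kernel of the $10 \times 5$ matrix $\M$, forcing a non-trivial kernel and hence rank at most $4$. Your write-up is slightly more explicit about invoking Proposition \ref{posseq} and rank-nullity, but the substance is identical.
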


\begin{proof} The rank bound for $M$ follows from Proposition \ref{b1b3}. The rank bound for $\M$ follows from the fact that a positive parametrizing tuple $\Bs$ corresponds to a non-zero solution of (\ref{pfaff-fin}), which requires $M$ to have non-trivial kernel, i.e., its rank is at most $4$. 
\end{proof}

On the other hand, we have the following proposition: 

\begin{proposition} \label{contra} Let $\Bs \in \bZ^{32}$ be a non-negative parametrizing sequence of a quintuple of B\"{u}chi pairs. Suppose that $\M = \M(\Bs)$ given in (\ref{pfaff-fin}) is singular. Then $\Bs$ must have at least one entry equal to zero. 
\end{proposition}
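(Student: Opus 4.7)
The plan is to proceed by contrapositive: assume every entry $s_j$ of $\Bs$ is strictly positive, so that each $t_j = s_j s_{31-j}$ is strictly positive, and derive a contradiction from the singularity of $\M$. By Proposition \ref{b1b3} we have $\beta_1 = \beta_3 = \beta$, and by Lemma \ref{ordlem2} the remaining structural constants lie in finite sets: $\beta, \beta_2 \in \{1,2\}$, $\delta_1, \delta_2 \in \{1,3\}$, $\gamma \in \{1,2,4\}$. The argument therefore reduces to a finite case analysis over at most $48$ admissible tuples $(\beta, \beta_2, \delta_1, \delta_2, \gamma)$.

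The hypothesis $\rank \M \leq 4$ forces every one of the $\binom{10}{5} = 252$ maximal minors of $\M$ to vanish. My approach is to select a carefully chosen subcollection of these minors whose expansions factor, via computer algebra, into products of linear and bilinear forms in the $t_j$'s. Under positivity, any factor which is a positive linear combination of the $t_j$'s is strictly positive, so the minor can only vanish if a complementary factor does. Typically this complementary factor is a bilinear expression of the shape $a\, t_i t_j - b\, t_k t_\ell$, and its vanishing produces a clean bilinear relation $t_i t_j = (b/a)\, t_k t_\ell$ among the positive variables.

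Assembling several such bilinear relations, together with the ten structural equations of Proposition \ref{quinstruc} viewed as bilinear constraints relating $\Bt_\heartsuit$ to $\Bt_\spadesuit$, I would then show that the resulting system is incompatible with $t_j > 0$ for all $j$. Concretely, I would expect the combined relations to force some bilinear combination to be both a strictly positive sum and equal to zero, or to force some individual $t_j$ to vanish outright; either way yields $s_j = 0$ or $s_{31-j} = 0$, contradicting the positivity assumption. Based on the introduction, this analysis naturally bifurcates into two regimes (the Type 1 and Type 2 branches alluded to later in the paper), each requiring its own chain of minor identities.

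The main obstacle is the combinatorial complexity of the minor calculation. As the outline remarks, most of the 252 sub-determinants "give seemingly no information", so identifying the small useful sub-collection, namely those whose factorizations have a manifestly positive linear factor that can be discarded, is an ad hoc process relying on computer algebra and substantial trial. Managing the case analysis over the admissible tuples of structural constants adds a further layer of bookkeeping, since the factorization patterns depend sensitively on which values of $\beta, \beta_2, \delta_1, \delta_2, \gamma$ are taken. Once an appropriate family of minors is located for each case, the rigidity of the positivity hypothesis is strong enough to deliver the contradiction.
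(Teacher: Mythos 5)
Your overall strategy coincides with the one the paper actually follows: assume all entries of $\Bs$ are positive, force all $5\times 5$ minors of $\M$ to vanish, exploit the fact that a well-chosen subset of these minors factors into a manifestly positive piece times a bilinear piece, and split into the Type I/Type II branches before reaching a contradiction. However, as written the proposal has two concrete gaps that positivity alone cannot close. First, the paper does not derive its contradiction purely from sign considerations on the resulting bilinear relations: after extracting relations such as $\delta_1 t_3 t_{12} = t_6 t_9$ and $\delta_1 t_5 t_{10} = (2/\beta_1)(2/\beta_2)\, t_6 t_9$ in the Type I branch, it invokes coprimality conditions inherited from the ordering relations (\ref{con1}) --- e.g.\ $\gcd(t_5 t_{10}, t_6 t_9) = 1$ because the corresponding $s_j$'s sit on opposite sides of consecutive rows of (\ref{quin}) --- to conclude the exact integral values $t_3 = t_{12} = 1$, $t_6 t_9 = \delta_1$, $t_5 t_{10} = (2/\beta_1)(2/\beta_2)$ (Lemma \ref{gcdlem1}, and similarly Lemma \ref{gcdlem2} in Type II). This arithmetic step is what collapses the system to few enough variables that the final incompatibilities (for instance $\delta_2 t_{11} = 5 t_5 t_{13}$ against (\ref{Typ1fin1}), or (\ref{Typ2feq1}) against (\ref{Typ2feq2})) can be exhibited; an argument using only positivity of bilinear forms would not produce these equalities.

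Second, you treat ``$\rank \M \leq 4$'' as the full strength of the singularity hypothesis, but in the Type I branch the paper shows that after all forced substitutions $\M$ still has rank exactly $4$, and the essentially unique null vector of that rank-$4$ system has a vanishing coordinate (this is the computation $\det \M_{0156}^{(0)} = 0$). Since the null vector must be proportional to the strictly positive vector $\Bt_\heartsuit$, one is forced to push the rank down to at most $3$, which generates the additional determinant conditions that finish that case. Without this secondary rank-drop observation the Type I analysis does not close. So the skeleton of your plan is right and matches the paper, but it omits the two ingredients --- the gcd calibration of the $t_j$'s and the further rank reduction --- that actually make the positivity contradiction materialize.
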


Proposition \ref{contra}, when combined with Proposition \ref{rankbd}, shows that positive parametrizing sequences of quintuples of B\"{u}chi pairs do not exist. Thus, by Proposition \ref{posseq}, shows that non-trivial quintuples of B\"{u}chi pairs do not exist, hence proving Proposition \ref{mainprop2}. \\

The remainder of the paper is devoted to the proof of Proposition \ref{contra}. \\

In order for $\rank \M \leq 4$, all $\binom{10}{5} = 252$ $5 \times 5$ sub-determinants of the $10 \times 5$ matrix $\M$ must vanish. It would be burdensome to consider them all. Using SAGE we computed a subset of the determinants that together give enough information to show that a non-trivial parametrizing sequence cannot exist. \\

Indeed, many of the sub-determinants factor very nicely. Put $\D_{i_1 i_2 i_3 i_4 i_5}$ for the $5 \times 5$ sub-determinant of $\M$ consisting of the rows $i_1, \cdots, i_5$. Here we take the convention that the rows are labelled with $0, 1, \cdots, 9$ instead of $1, 2, \cdots, 10$. We then have: 

\begin{lemma} \label{detlem1} Suppose that $\Bs$ corresponds to a non-trivial quintuple $\B$. Then the sub-determinant
\[\D_{23468} = -(2/\beta_1) \gamma \delta_2^2 (\beta_1 \beta_2 t_5 t_{10} - t_3 t_{12} - (3/\delta_1) t_6 t_9)^2  t_{14} = 0.\]
In particular, assuming $\Bs$ corresponds to a non-trivial tuple, we must have 
\begin{equation} \label{ramf1} (3/\delta_1) t_6 t_9 = \beta_1 \beta_2 t_5 t_{10} - t_3 t_{12}.
\end{equation}
\end{lemma}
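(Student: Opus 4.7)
The plan is to compute $\D_{23468}$ directly by exploiting the sparsity of the relevant $5 \times 5$ sub-matrix of $\M$, then to exhibit the claimed factorisation, and finally to invoke positivity to extract a linear relation from the squared factor.

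First, I extract from $\M$ in (\ref{pfaff-fin}) the $5 \times 5$ sub-matrix indexed by rows $2, 3, 4, 6, 8$ (using the paper's $0$-indexed convention). Inspecting the columns of this sub-matrix, the last column has a single non-zero entry, namely the $\delta_2 t_{14}$ appearing in the row labelled $3$. Cofactor expansion along this column immediately reduces the computation to
\[
\D_{23468} \;=\; \pm\,\delta_2 t_{14} \cdot \det M',
\]
where $M'$ is the $4 \times 4$ matrix obtained by taking the rows $2, 4, 6, 8$ of $\M$ truncated to the first four columns. This isolates the prefactor $t_{14}$ in the statement of the lemma.

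Next, I compute $\det M'$ symbolically. It is a polynomial of degree $4$ in the variables $t_j$, with coefficients that are monomials in the structural constants $\beta, \beta_2, \gamma, \delta_1, \delta_2$. The substance of the lemma is that this polynomial equals, up to a non-zero scalar, the perfect square
\[
\bigl(\beta_1 \beta_2 t_5 t_{10} - t_3 t_{12} - (3/\delta_1) t_6 t_9\bigr)^2.
\]
The cleanest verification is by direct symbolic expansion (as the paper remarks, SAGE handles this routinely); alternatively, one can perform a Laplace expansion along one of the rows and group the $24$ resulting terms in pairs, checking that the monomials assemble into the square of the linear form $\beta_1 \beta_2 t_5 t_{10} - t_3 t_{12} - (3/\delta_1) t_6 t_9$. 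Either approach produces the prefactor $-(2/\beta_1)\gamma \delta_2^2$ and confirms the identity claimed for $\D_{23468}$.

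Finally, to deduce (\ref{ramf1}), I invoke Proposition \ref{rankbd}: $\rank \M \leq 4$, so every $5 \times 5$ sub-determinant of $\M$ vanishes, and in particular $\D_{23468} = 0$. By Proposition \ref{posseq}, the parametrizing sequence $\Bs$ is strictly positive, whence $t_{14} = s_{14} s_{17} > 0$; the scalar prefactor $-(2/\beta_1)\gamma \delta_2^2$ is also non-zero. Consequently the squared factor must vanish, and taking square roots yields the bilinear relation (\ref{ramf1}).

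The principal obstacle is Step 2: there is no a priori reason to expect $\det M'$ to be a perfect square, and indeed this is precisely what makes $\D_{23468}$ useful, since it upgrades a single quartic vanishing condition into a genuinely bilinear constraint on the $t_j$'s. Verifying this factorisation by hand would be tedious but mechanical; the remainder of the argument is a short application of the rank bound together with positivity.
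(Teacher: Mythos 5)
Your proposal is correct and follows essentially the same route as the paper: the paper likewise establishes the factorisation of $\D_{23468}$ by direct symbolic computation (via SAGE, verifiable by hand) and then concludes from the vanishing of the sub-determinant together with positivity of $\Bs$ (so $t_{14} \ne 0$) and the non-vanishing of the scalar prefactor that the squared bilinear factor must be zero. Your cofactor expansion along the last column, which isolates the $\delta_2 t_{14}$ entry and reduces the computation to a $4 \times 4$ determinant, is a clean way of organising the hand verification the paper alludes to, but it is an elaboration of the same argument rather than a different one.
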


\begin{proof} The calculation was done by SAGE, but can easily be verified by hand. The conclusion follows from the fact that $\B$ being non-trivial implies that $\Bs$ must consist of only positive terms, and hence $t_{14} \ne 0$. Further, $\delta_2 \in \{1,3\}$ is also non-zero. 
\end{proof}

Next we consider the following sub-determinants:

\begin{lemma} \label{detlem2} Suppose that $\Bs$ corresponds to a non-trivial quintuple $\B$. Then the following sub-determinants of $\M$ satisfy: 
\begin{equation} \label{detlem2eq} 
\begin{matrix} 
\D_{45679} & = & -4(\beta_1 \beta_2 t_{10} t_{11} - t_6 t_7 + (3/\delta_2) t_{12} t_{13})(\delta_1 t_3 t_{12} - t_6 t_9) \gamma^2 t_{10} & = & 0 \\ 
\D_{01589} & = & -(\beta_1 \beta_2 t_{10} t_{11} - t_6 t_7 - (3/\delta_2) t_{12} t_{13})( \gamma t_5 t_7 - (2/\beta_1) t_{12} t_{14}) \beta_2 \gamma \delta_1 t_{11} & = & 0 \\
\D_{14589} & = & -(2/\beta_1) (\beta_1 \beta_2 t_{10} t_{11} - t_6 t_7 - (3/\delta_2) t_{12} t_{13})(\delta_1 t_5 t_7 - t_9 t_{11}) \beta_2 \gamma^2 t_{12} & = & 0 \\ 
\D_{03579} & = & - (\beta_1 \beta_2 t_{10} t_{11} - t_6 t_7 + (3/\delta_2) t_{12} t_{13})(\beta_1  \delta_1 t_5 t_{10} - (4/\beta_2) t_6 t_9) \delta_1 \gamma t_{14} & = & 0 \\ 
\D_{01579} & = & -(2/\beta_1) (\beta_1 \beta_2 t_{10} t_{11} - t_6 t_7 + (3/\delta_2) t_{12} t_{13})(\beta_1 \beta_2  t_{10} t_{11} - t_6 t_7 - (3/\delta_2) t_{12} t_{13}) \gamma \delta_1^2 t_{14} & = & 0
\end{matrix} 
\end{equation}
\end{lemma}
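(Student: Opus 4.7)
\medskip

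\noindent\textbf{Proof Proposal.} The approach parallels that of Lemma \ref{detlem1}: each identity is a direct, if tedious, computation of a $5\times 5$ sub-determinant of the explicit $10\times 5$ matrix $\M$ in (\ref{pfaff-fin}). The vanishing on the right-hand side is not an independent claim but a direct consequence of Proposition \ref{rankbd}: since $\Bs$ is a positive parametrizing sequence of a non-trivial quintuple, $\M$ must be rank-deficient, so every $5\times 5$ minor must vanish. Thus the content of the lemma is the factorization of these minors, which is what supplies useful structural information.

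The plan is as follows. First, for each index set $\{i_1,\ldots,i_5\}$ appearing in the statement, extract the corresponding five rows from $\M$. Because $\M$ is extremely sparse (each row has at most four nonzero entries, and many columns have at most three nonzero entries restricted to the chosen rows), Laplace expansion along the sparsest column reduces the computation of $\D_{i_1 i_2 i_3 i_4 i_5}$ to a handful of $4\times 4$ sub-determinants. For instance, for $\D_{45679}$, the column indexed by $t_8$ (the fourth column of $\M$) has only one nonzero entry among rows $\{4,5,6,7,9\}$, and a similar observation lets one peel off columns one at a time. After Laplace expansion, the remaining determinant should factor visibly as a product involving the linear combinations appearing in the stated formulas.

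Once each $\D_{i_1 i_2 i_3 i_4 i_5}$ has been expanded, the identification with the claimed factored form is verified by checking that the two sides agree as polynomials in the $t_j$ and the structural constants $\beta_1,\beta_2,\beta_3,\gamma,\delta_1,\delta_2$. This can be done either symbolically by computer algebra (SAGE, as the author indicates), or term-by-term by hand. Finally, since $\Bs$ is positive, we have $t_j = s_j s_{31-j} > 0$ for all $j$, and $\gamma, \delta_1, \delta_2, \beta_2$ are nonzero by Lemma \ref{ordlem2}; so each scalar prefactor in (\ref{detlem2eq}) (e.g. $\gamma^2 t_{10}$, $\beta_2\gamma\delta_1 t_{11}$, etc.) is nonzero, and we may conclude that at least one of the two displayed bilinear factors must vanish, which is the form in which the lemma will later be used.

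The main obstacle is purely computational: organizing the $5\times 5$ Laplace expansion in a way that yields the advertised factorization without mountains of irrelevant algebra. The fact that the minors factor so cleanly into a product of two bilinear terms (each linear in $S_\heartsuit$ and linear in $S_\spadesuit$) is not obvious from the matrix $\M$ and is the whole reason one selects these five specific index sets rather than others. I would therefore carry out one representative case by hand (say $\D_{01579}$, which has the most symmetric factored form) to make the mechanism transparent, and then delegate the remaining four to SAGE, noting that each is verified in the same manner.
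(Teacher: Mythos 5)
Your proposal matches the paper's proof, which for this lemma is simply ``Explicit calculation'' (carried out in SAGE): one computes each $5\times 5$ minor of $\M$, verifies the stated factorization, and deduces the vanishing from the singularity of $\M$ forced by Proposition \ref{rankbd}. One small slip in your illustrative aside: among rows $\{4,5,6,7,9\}$ the $t_8$-column of $\M$ has three nonzero entries, not one, but this does not affect the argument since the expansion/verification goes through regardless of which column one peels off first.
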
 

\begin{proof} Explicit calculation. 
\end{proof}

By examining (\ref{detlem2eq}), we make the following conclusion: 

\begin{lemma} \label{detlem3} Let $\Bs$ be a parametrizing sequence of a non-trivial quintuple of B\"{u}chi pairs $\B$, given by (\ref{quin}). Then exactly one of 
\[\beta_1 \beta_2 t_{10} t_{11} - t_6 t_7 + (3/\delta_2) t_{12} t_{13} \quad \text{and} \quad \beta_1 \beta_2 t_{10} t_{11} - t_6 t_7 - (3/\delta_2) t_{12} t_{13}\]
vanishes. 
\end{lemma}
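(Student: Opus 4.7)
\textbf{Proof plan for Lemma \ref{detlem3}.} Write
\[A = \beta_1 \beta_2 t_{10} t_{11} - t_6 t_7 + (3/\delta_2) t_{12} t_{13}, \qquad B = \beta_1 \beta_2 t_{10} t_{11} - t_6 t_7 - (3/\delta_2) t_{12} t_{13}.\]
The plan is to extract both the ``at least one vanishes'' and ``not both vanish'' statements from the data already in Lemma \ref{detlem2}, combined with the positivity furnished by Proposition \ref{posseq}.

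First I would invoke Proposition \ref{posseq}: since $\B$ is a non-trivial quintuple of B\"{u}chi pairs, its parametrizing sequence $\Bs \in \bZ_{>0}^{32}$ has every entry strictly positive. Hence every $t_j = s_j s_{31-j}$, $0 \leq j \leq 15$, is positive. Recall also that $\gamma \in \{1,2,4\}$ and $\delta_1, \delta_2 \in \{1,3\}$ by Lemma \ref{ordlem2}, and $\beta_1 \in \{1,2\}$ by the same lemma. In particular the prefactors $(2/\beta_1)\gamma\delta_1^2 t_{14}$ appearing in $\D_{01579}$ are all nonzero.

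Next I would use the identity from Lemma \ref{detlem2},
\[\D_{01579} = -(2/\beta_1)\,A\cdot B\,\gamma\delta_1^2\,t_{14} = 0.\]
Since $\Bs$ corresponds to a non-trivial quintuple, the parametrizing sequence satisfies (\ref{pfaff-fin}), which forces every $5\times 5$ sub-determinant of $\M$ to vanish, in particular $\D_{01579}=0$. Cancelling the nonzero prefactor yields $A\cdot B=0$, so at least one of $A,B$ vanishes.

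Finally, to rule out that both vanish simultaneously, I would simply compute
\[A - B = (6/\delta_2)\,t_{12} t_{13}.\]
Because $\delta_2 \in \{1,3\}$ and $t_{12},t_{13}>0$, we have $A-B>0$, hence $A\ne B$. Combined with $AB=0$, exactly one of $A$ and $B$ equals zero, which is the claim. The main (and only) subtlety here is recognising that the strict positivity of the parametrizing sequence is what converts the algebraic identity $\D_{01579}=0$ into a genuine dichotomy rather than a degenerate identity; no further calculation is required beyond the explicit factorisation already carried out in Lemma \ref{detlem2}.
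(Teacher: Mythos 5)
Your proposal is correct and follows essentially the same route as the paper: both deduce $A\cdot B=0$ from the vanishing of $\D_{01579}$ (after cancelling the nonzero prefactor $(2/\beta_1)\gamma\delta_1^2 t_{14}$, justified by positivity of the parametrizing sequence), and both conclude exactly one factor vanishes because the difference $A-B=2(3/\delta_2)t_{12}t_{13}$ is nonzero. Your write-up is if anything slightly more explicit than the paper's about where the positivity enters.
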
 

\begin{proof} That at least one of them must vanish is clear from $\D_{01579} = 0$ in (\ref{detlem2eq}). The difference of the two terms is $2(3/\delta_2) t_{12} t_{13}$, which must be non-zero by the assumption that $\B$ is non-trivial. 
\end{proof} 

We now consider the two possibilities in Lemma \ref{detlem3}. We give a definition: 

\begin{definition} Let $\B = \{(x_j, y_j) : 1 \leq j \leq 5\}$ be a non-trivial, ordered quintuple of B\"{u}chi pairs and let $\Bs$ be the parametrizing sequence given by (\ref{quin}) with structural constants given by (\ref{con1}), (\ref{con2}), (\ref{con3}), and (\ref{gamma}). Further assume that $\beta_1 = \beta_3$. \\

We say that $\Bs$ is \emph{Type I} if 
\[\beta_1 \beta_2 t_{10} t_{11} - t_6 t_7 - (3/\delta_2) t_{12} t_{13} = 0\]
and \emph{Type II} if 
\[\beta_1 \beta_2 t_{10} t_{11} - t_6 t_7 + (3/\delta_2) t_{12} t_{13} = 0.\]
\end{definition}

In the next sections, we show that there cannot be any non-trivial Type I or Type II parametrizing sequences, which suffices to show that no non-trivial ordered quintuple of B\"{u}chi pairs exist.

\section{Type I parametrizing sequences} 
\label{Typ1} 

Suppose $\Bs$ is a Type I parametrizing sequence. We have 
\begin{equation} \label{case1m} \beta_1 \beta_2 t_{10} t_{11} - t_6 t_7 + (3/\delta_2) t_{12} t_{13} \ne 0
\end{equation} 
by Lemma \ref{detlem3}. Lemma \ref{detlem2} and positivity then implies that 
\begin{equation} \label{case1eq1} \delta_1 t_3 t_{12} = t_6 t_9 \quad \text{and} \quad  \delta_1 t_5 t_{10} = (2/\beta_1)(2/\beta_2) t_6 t_9.
\end{equation}

We now pry apart the bilinear equations in (\ref{case1eq1}) using co-primality conditions: 

\begin{lemma} \label{gcdlem1} Let $\Bs$ be a non-trivial Type I parametrizing sequence. Then 
\begin{equation} 
t_3  =  t_{12}    =  1, \quad t_6 t_9 = \delta_1, \quad \text{and} \quad t_5 t_{10} = (2/\beta_1)(2/\beta_2)
\end{equation}

\end{lemma}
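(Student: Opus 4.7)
The main observation is that the components $s_0, s_1, \ldots, s_{31}$ of the parametrizing sequence $\Bs$ are pairwise coprime positive integers. This is implicit in the parametrization (\ref{paratup}): for each $j \in \{1, \ldots, 5\}$, $s_i$ divides $x_j$ if bit $j-1$ of $i$ is zero and divides $y_j$ if this bit is one. For any $i \neq i'$, there is some bit position $j-1$ in which they disagree, so (after relabeling) $s_i \mid x_j$ and $s_{i'} \mid y_j$, whence $\gcd(s_i, s_{i'}) \mid \gcd(x_j, y_j)$. The proof of Lemma \ref{gcddecomp} shows that for every prime $p$ dividing $x_j y_j$ at least one of the valuations $k_j, \ell_j$ vanishes, so $\gcd(x_j, y_j) = 1$ and the $s_i$ are pairwise coprime.

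With this in hand, the lemma reduces to a one-line divisibility argument. Expanding $t_j = s_j s_{31-j}$, the two equations in (\ref{case1eq1}) become
\[
\delta_1 \, s_3 s_{28} s_{12} s_{19} = s_6 s_{25} s_9 s_{22}, \qquad
\delta_1 \, s_5 s_{26} s_{10} s_{21} = (2/\beta_1)(2/\beta_2) \, s_6 s_{25} s_9 s_{22}.
\]
In the first equation the eight indices on the two sides are distinct, so pairwise coprimality gives $\gcd(s_3 s_{28} s_{12} s_{19}, \, s_6 s_{25} s_9 s_{22}) = 1$. But the equation forces $s_3 s_{28} s_{12} s_{19}$ to divide the right-hand side; a divisor coprime to its multiple must equal $1$. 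Positivity of the $s_i$ then yields $s_3 = s_{28} = s_{12} = s_{19} = 1$, so $t_3 = t_{12} = 1$ and $t_6 t_9 = s_6 s_{25} s_9 s_{22} = \delta_1$. Substituting $t_6 t_9 = \delta_1$ into the second equation and cancelling $\delta_1$ gives $t_5 t_{10} = (2/\beta_1)(2/\beta_2)$.

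There is no serious obstacle here: the only step that takes any thought is extracting pairwise coprimality of the $s_i$, which is a direct byproduct of the inductive construction in the proof of Proposition \ref{posseq}. Once that is available, neither the Pfaffian equations nor the remaining $5\times 5$ sub-determinants need to be invoked at this stage; the conclusion is entirely elementary number theory.
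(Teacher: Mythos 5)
Your argument hinges on the claim that the entries $s_0,\ldots,s_{31}$ of the parametrizing sequence are pairwise coprime, and that claim is false. Your derivation of it uses $\gcd(x_j,y_j)=1$, but Lemma \ref{gcddecomp} does not give this: it only shows that no prime can divide all four of $x_j,y_j,x_{j+1},y_{j+1}$ for \emph{consecutive} rows, i.e.\ that $\gcd(x_j,y_j)$ and $\gcd(x_{j+1},y_{j+1})$ are coprime. A single $\gcd(x_j,y_j)$ can certainly exceed $1$: for the Hensley quadruple $(6,23,32,39)$ one computes $(x_1,y_1)=(126,120)$, so $\gcd(x_1,y_1)=6$, and hence $3$ divides both an even-indexed and an odd-indexed $s_i$. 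In general, if $i$ and $i'$ sit on opposite sides of (\ref{quin}) only in rows $j$ and $j'$, all one can say is that every prime factor of $\gcd(s_i,s_{i'})$ divides $(x_{j'}+y_{j'})-(x_j+y_j)=j'-j$. This breaks your first step: to extract $t_3t_{12}=1$ directly from $\delta_1 t_3 t_{12}=t_6t_9$ you need $\gcd(t_3t_{12},t_6t_9)=1$, but for instance $s_3$ and $s_6$ lie on opposite sides only of rows $1$ and $3$ (and likewise $s_3,s_9$ only of rows $2$ and $4$), so those gcds are merely constrained to be powers of $2$ and need not be trivial.

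The paper's proof avoids this by attacking the \emph{second} equation of (\ref{case1eq1}) first. The indices occurring in $t_5t_{10}=s_5s_{26}s_{10}s_{21}$ and $t_6t_9=s_6s_{25}s_9s_{22}$ have the special feature that every cross-pair lies on opposite sides of two \emph{consecutive} rows of (\ref{quin}), so $\gcd(t_5t_{10},t_6t_9)=1$ genuinely holds. From $\delta_1 t_5t_{10}=(2/\beta_1)(2/\beta_2)\,t_6t_9$ the common ratio $\delta_1/(t_6t_9)=(2/\beta_1)(2/\beta_2)/(t_5t_{10})$ is then a positive integer dividing both $\delta_1\in\{1,3\}$ and $(2/\beta_1)(2/\beta_2)\in\{1,2,4\}$, hence equal to $1$; this gives $t_6t_9=\delta_1$ and $t_5t_{10}=(2/\beta_1)(2/\beta_2)$, and only \emph{then} does the first equation yield $t_3t_{12}=1$. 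Your overall strategy (coprimality plus divisibility) is the right one, but identifying \emph{which} pairs of indices actually admit a coprimality certificate is the entire content of the lemma, and it must be done row by row rather than asserted wholesale.
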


\begin{proof}

Observe that the pairs
\[(s_5, s_6), \quad (s_5, s_9), \quad (s_{10}, s_{6}), \quad (s_{10}, s_9)\]
each lie on opposite sides of consecutive pairs in (\ref{quin}), which implies that 
\[\gcd(t_5 t_{10}, t_6 t_9) = 1.\]
It follows that 
\[\frac{\delta_1}{t_6 t_9} = \frac{(2/\beta_1)(2/\beta_2)}{t_5 t_{10}} \in \bZ.\]
Furthermore, since $\delta_1 \in \{1,3\}$ both sides must equal unity. Therefore
\begin{equation} \label{T1fir} t_5 t_{10} = (2/\beta_1)(2/\beta_2) \quad \text{and} \quad  t_6 t_9 = \delta_1.
\end{equation}
Equation (\ref{T1fir}) then implies 
\begin{equation} \label{T1sec} \delta_1 t_3 t_{12} = t_6 t_9 = \delta_1, \quad \text{or} \quad t_3 t_{12} = 1.
\end{equation}
Then (\ref{T1sec}) implies 
\[s_3 = s_{28} = s_{12} = s_{19} = 1\]
and completes the proof of the lemma. \end{proof} 

\subsection{Non-existence of positive Type I parametrizing sequences}

With the restrictions from Lemma \ref{gcdlem1}, namely $t_3 = t_{12} = 1$
and 
\[t_9 = \frac{\delta_1}{t_6}, \quad t_{10} = \frac{(2/\beta_1)(2/\beta_2)}{t_5},\]
we obtain  
\[\D_{02457} = (2/\beta_1)(2/\beta_2) ( \gamma t_{11} -  \gamma (3/\delta_2) t_5 t_{13} + (4/\beta_1) t_6 t_{14}) (\delta_2 t_6 t_7 + t_{13}) \gamma^2 \delta_1^2 t_5^{-1} t_6^{-2}  = 0.\]
By positivity, we see that 
\begin{equation} \label{Typ1eq1}  \gamma t_{11} -  \gamma (3/\delta_2) t_5 t_{13} + (4/\beta_1) t_6 t_{14} = 0.\end{equation}
Next we have 
\[\D_{14679} = (8/\beta_1) ( t_5 t_6 t_7 - 4  t_{11} + (3/\delta_2) t_5 t_{13}) (\delta_2 t_{11} + t_5 t_{13}) \gamma^2/s_5^2 = 0.\]
This implies, again by positivity, that 
\begin{equation} \label{Typ1eq2} t_5 t_6 t_7 - 4  t_{11} + (3/\delta_2) t_5 t_{13} = 0.\end{equation} 
Equations (\ref{Typ1eq1}) and (\ref{Typ1eq2}) give the relations
\begin{equation} \label{t7t14}t_7 = \frac{4 \delta_2 t_{11} - 3 t_5 t_{13}}{\delta_2 t_5 t_6} \quad \text{and} \quad t_{14} = \frac{3 \beta_1 \gamma t_5 t_{13} - \beta_1 \gamma \delta_2 t_{11}}{4 \delta_2 t_6}. \end{equation}

Feeding the equations in (\ref{t7t14}) into SAGE, we find that the matrix $\M$ in (\ref{pfaff-fin}) now has rank $4$. But this is not enough, because the unique-up-to-scalars null vector necessarily has a zero-component. Indeed, we may form the $4 \times 5$ sub-matrix 

\[\M_{0156} = \begin{bmatrix} \frac{\beta_1 \gamma(t_{11} - (3/\delta_2) t_5 t_{13}}{4} & 2 \beta_1 t_{13} & - t_{11} & 0 & 0 \\ \\ 
0 & - t_{13} & (\frac{2}{\beta_1}) \beta_2 t_{11} & \frac{-4 t_{11} + (3/\delta_2) t_5 t_{13}) \delta_1}{t_5 t_6} & 0 \\ \\
0 & \frac{4}{\beta_2 t_5} & -(\frac{2}{\beta_1})(\frac{3}{\delta_2}) & 0 & \frac{\gamma(4 t_{11} - (3/\delta_2) t_5 t_{13}}{t_5 t_6} \\ \\
\gamma & 0 & - (\frac{3}{\delta_1})(\frac{2}{\beta_1}) t_6 & \frac{4}{\beta_2 t_5} & 0  \end{bmatrix} \]
of rank $4$. The corresponding components of the null-vector are then given by $4 \times 4$ sub-determinants of $\M_{0156}$, and note that for $\M_{0156}^{(0)}$, the $4 \times 4$ sub-matrix of $\M_{0156}$ obtained by deleting the $0$-th column, satisfies
\[\det \M_{0156}^{(0)} = \det  \begin{bmatrix} \beta_1 \gamma(t_{11} - (3/\delta_2) t_5 t_{13})/4 & 2 \beta_1 t_{13} & - t_{11} & 0  \\ 
0 & - t_{13} & (2/\beta_1) \beta_2 t_{11} & (-4 t_{11} + (3/\delta_2) t_5 t_{13}) \delta_1/(t_5 t_6)  \\ 
0 & 4/(\beta_2 t_5) & -(2/\beta_1)(3/\delta_2) & 0  \\ 
\gamma & 0 & - (3/\delta_1)(2/\beta_1) t_6 & 4/(\beta_2 t_5)   \end{bmatrix} = 0.\]

Thus for $\Bs$ to correspond to a non-trivial quintuple, we require that $\M$ has rank at most $3$. This then requires that the determinant of the sub-matrix $\M_{0156}^{(3)}$ given by
\[\det \begin{bmatrix} 2 \beta_1 t_{13} & - t_{11} & 0 & 0 \\ 
 - t_{13} & (2/\beta_1) \beta_2 t_{11} & (-4 t_{11} + (3/\delta_2) t_5 t_{13}) \delta_1/(t_5 t_6) & 0 \\ 
 4/(\beta_2 t_5) & -(2/\beta_1)(3/\delta_2) & 0 & \gamma(4 t_{11} - (3/\delta_2) t_5 t_{13})/(t_5 t_6) \\ 
  0 & - (3/\delta_1)(2/\beta_1) t_6 & 4/(\beta_2 t_5) & 0  \end{bmatrix}  \]
\[= \frac{36(4 \delta_2 t_{11} - 3 t_{13} t_5)(\delta_2 t_{11} - t_5 t_{13}) \gamma t_{13}}{\beta_2 \delta_2^2 t_5^2 t_6} = 0.\] 
This means that 
\begin{equation} \label{Typ1fin1} 4 t_{11} = (3/\delta_2) t_5 t_{13} \quad \text{or} \quad \delta_2 t_{11} = t_5 t_{13}.
\end{equation}
Note that $\gcd(t_{11}, t_{13}) = \gcd(t_5, t_{11}) = 1$. \\

Similarly, we have the rank-$4$ submatrix
\[\M_{3467} =\begin{bmatrix} 0 & 0 & t_5 & -(2/\beta_2) \beta_1 \delta_1/t_6 & - (\delta_2 t_{11} - 3 t_t t_{13}) \beta_1 \gamma/(4 t_6) \\
\gamma \delta_1/t_6 & - 4 \delta_2/(\beta_2 t_5) & 2/\beta_1 & 0 & 0 \\
\gamma & 0 & - (3/\delta_1)(2/\beta_1) t_6 & 4/(\beta_2 t_5) & 0 \\
0 & 0 & (2/\beta_1) t_6 & - 4 \delta_1/(\beta_2 t_5) & - \gamma t_{13}  \end{bmatrix}. \]
The submatrix $\M_{3467}^{(3)}$ of $\M_{3467}$ obtained by deleting the third column has determinant equal to 
\[\det \M_{3467}^{(3)} = \frac{-4 (\delta_2 t_{11} - 5 t_5 t_{13})  \beta_1 \gamma^2 \delta_1 \delta_2}{\beta_2^2 t_5^2 t_6} = 0.\]
This implies that 
\begin{equation} \label{Typ1fin2} \delta_2 t_{11} = 5 t_5 t_{13}.\end{equation}
Combining (\ref{Typ1fin2}) with either of the options in (\ref{Typ1fin1}) shows that the only possible solution is $t_{11} = t_5 t_{13} = 0$. This shows that no non-trivial Type I quintuple of  B\"{u}chi pairs may exist.

\section{Type II parametrizing sequences} 
\label{Typ2} 

We now have
\begin{equation} \label{case2m}  \beta_1 \beta_2 t_{10} t_{11} - t_6 t_7 -  (3/\delta_2) t_{12} t_{13} \ne 0.
\end{equation} 

Then Lemma \ref{detlem2} implies that 
\begin{equation} \label{t5t7} \gamma t_5 t_7 = (2/\beta_1) t_{12} t_{14} \quad \text{and} \quad \delta_1 t_5 t_7 = t_9 t_{11}.\end{equation}

\begin{lemma} \label{gcdlem2} Let $\Bs$ be a positive Type II parametrizing sequence. Then we have 
\[t_5 = t_7 = 1, \quad t_9 t_{11} = \delta_1, \quad \text{and} \quad t_{12} t_{14} = \frac{\gamma}{2/\beta_1}.\]
\end{lemma}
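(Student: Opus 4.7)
The argument will follow the template of Lemma \ref{gcdlem1}, with only the underlying index sets changed. The plan is to first establish two coprimality relations, then read off the three conclusions directly from the identities in (\ref{t5t7}).

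The key input is the pair of coprimalities
\[\gcd(t_5 t_7,\, t_9 t_{11}) = 1 \qquad \text{and} \qquad \gcd(t_5 t_7,\, t_{12} t_{14}) = 1.\]
Recall $t_j = s_j s_{31-j}$, and that each $s_j$ with binary expansion $j = \sum_{i=0}^{4} \ep_i 2^i$ divides $x_{i+1}$ or $y_{i+1}$ according to whether $\ep_i = 0$ or $1$. For any two distinct indices $j,k \in \{0,\ldots,31\}$, picking any bit at which their expansions differ places $s_j$ and $s_k$ on opposite sides of some pair $(x_i, y_i)$ in (\ref{quin}). For the eight index pairs obtained by crossing $\{5,7,24,26\}$ with $\{9,11,20,22\}$, a direct check of the binary expansions shows that each pair lies on opposite sides of (at least one pair of) \emph{consecutive} B\"uchi pairs, so the coprimality of each corresponding $\gcd(s_j,s_k)$ follows exactly as in the proof of Lemma \ref{gcdlem1}, via Lemma \ref{gcddecomp}. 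The same bookkeeping with $\{9,11,20,22\}$ replaced by $\{12,14,17,19\}$ yields the second coprimality.

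Granting these, I apply them to (\ref{t5t7}). The identity $\delta_1 t_5 t_7 = t_9 t_{11}$ shows that $t_5 t_7$ is a positive divisor of $t_9 t_{11}$ coprime to it, so $t_5 t_7 = 1$; positivity of each $s_j$ then forces $t_5 = t_7 = 1$, and the same identity immediately gives $t_9 t_{11} = \delta_1$. Feeding $t_5 t_7 = 1$ into $\gamma t_5 t_7 = (2/\beta_1) t_{12} t_{14}$ yields
\[t_{12} t_{14} = \frac{\gamma}{2/\beta_1},\]
as claimed.

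The only non-routine step is verifying the two coprimality statements; once those are in hand, every remaining line is pure divisibility. Since the index-crossing calculations amount to minor combinatorial bookkeeping, essentially identical to what is used in Lemma \ref{gcdlem1}, I expect no genuine obstacle, and the main effort is tracking the symbols carefully and keeping the role of $\gamma$ and $\beta_1$ straight in the final substitution.
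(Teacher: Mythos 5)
Your proposal follows the paper's proof of this lemma essentially verbatim: establish $\gcd(t_5 t_7,\, t_9 t_{11}) = 1$ by placing the relevant $s_j$'s on opposite sides of consecutive rows of (\ref{quin}), then read off $t_5 t_7 = 1$, $t_9 t_{11} = \delta_1$, and $t_{12} t_{14} = \gamma/(2/\beta_1)$ from (\ref{t5t7}). The second coprimality $\gcd(t_5 t_7,\, t_{12} t_{14}) = 1$ that you set up is never needed; like the paper, you in fact obtain $t_{12}t_{14}$ by substituting $t_5 t_7 = 1$ into $\gamma t_5 t_7 = (2/\beta_1) t_{12} t_{14}$.

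One caution about the step you dismiss as routine bookkeeping. Since $t_j = s_j s_{31-j}$, the coprimality $\gcd(t_5 t_7,\, t_9 t_{11}) = 1$ requires all sixteen (not eight) pairs obtained by crossing $\{s_5, s_7, s_{24}, s_{26}\}$ with $\{s_9, s_{11}, s_{20}, s_{22}\}$ to be coprime, and a direct check of the binary expansions shows that four of them, namely $(s_5, s_{20})$, $(s_{26}, s_{11})$, $(s_7, s_{22})$, and $(s_{24}, s_9)$, lie on opposite sides of (\ref{quin}) only in rows $1$ and $5$, which are not consecutive. For these pairs the mechanism of Lemma \ref{gcddecomp} yields only that a common prime divisor divides $(x_5+y_5)-(x_1+y_1) = 4$, i.e.\ that the gcd is a power of $2$, not that it equals $1$; so your blanket assertion that ``each pair lies on opposite sides of at least one pair of consecutive B\"{u}chi pairs'' is false as stated, and without it the identity $\delta_1 t_5 t_7 = t_9 t_{11}$ only pins $t_5 t_7$ down to a power of $2$. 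To be fair, the paper's own proof glosses over exactly the same point with the phrase ``this implies the same holds with $s_j$ replaced with $t_j$'' after checking only the four pairs $(s_5,s_9), (s_5,s_{11}), (s_7,s_9), (s_7,s_{11})$, so you have reproduced its argument, elision included, rather than introduced a new gap; but a complete write-up would need a supplementary $2$-adic argument for the four complementary pairs.
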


\begin{proof} 

Note that the pairs 
\[(s_5, s_9), (s_5, s_{11}), (s_7, s_9), (s_7, s_{11})\]
all lie on opposite ends of two consecutive rows in (\ref{quin}) at least once, which by (\ref{con1}) means that 
\[\gcd(s_5, s_9) = \gcd(s_5, s_{11}) = \gcd(s_7, s_9) = \gcd(s_7, s_{11}) = 1.\]
This implies the same holds with $s_j$ replaced with $t_j$. It then follows from (\ref{t5t7}) that 
\[\frac{\delta_1}{t_9 t_{11}} = \frac{1}{t_5 t_7} \in \bZ,\]
which implies
\[t_5 = t_7 = 1\quad \text{and} \quad t_9 t_{11} = \delta_1.\]
This then implies 
\[t_{12} t_{14} = \frac{\gamma}{2/\beta_1},\]
as desired. \end{proof}

\subsection{Non-existence of positive Type II parametrizing sequences}  

Lemma \ref{gcdlem2} gives us
\begin{equation} t_5 = t_7 = 1, \quad t_{11} = \frac{\delta_1}{t_9}, \quad \text{and} \quad t_{14} = \frac{\gamma}{(2/\beta_1) t_{12}}.
\end{equation} 
Making these substitutions, we find that 
\[\D_{24689} = \frac{2 (\beta_1 \beta_2  t_{10} - t_{12} t_3 - (3/\delta_1) t_6 t_9)^2 \beta_2 \gamma^2 \delta_2}{\beta_1 t_9} = 0,\]
\[\D_{26789} = \frac{2 (\beta_1 \beta_2  t_{10} - t_{12} t_3 - (3/\delta_1) t_6 t_9)(\delta_1 \delta_2 t_3 + t_9 t_{13}) \beta_2 \gamma^2 t_6}{\beta_1 t_9} = 0.\]
Hence 
\begin{equation} \label{case2subeq1} \beta_1 \beta_2  t_{10} - t_{12} t_3 - (3/\delta_1) t_6 t_9  = 0.
\end{equation}
We also have

\begin{equation}
\D_{04567}  =  \frac{(\beta_1 \beta_2 \delta_1 \delta_2 t_{10} - 4 t_{12} t_{13}t_9)(\delta_1 \delta_2 t_{12} t_3 + 3 t_{12} t_{13} t_9 + 2 \delta_2 t_6 t_9) \beta_1 \gamma^2 t_{10}}{\beta_2 \delta_2 t_{12} t_{9}}  =  0
\end{equation}
This implies, by positivity,
\begin{equation} \label{case2subeq2} \delta_1 \delta_2 t_{10} = (2/\beta_1)(2/\beta_2) t_9 t_{12} t_{13}. 
\end{equation}

Substituting equations (\ref{case2subeq1}) and (\ref{case2subeq2}) into $\D_{34789}$ gives 
\[\D_{34689} = \frac{-8(\delta_1 \delta_2 t_3 - 4 t_9 t_{13})(\delta_1 \delta_2 t_3 - 7 t_9 t_{13}) \gamma^2 t_{12}^2}{3\delta_1 \delta_2 t_9} = 0,\]
and substituting into $\D_{25789}$ gives
\[\D_{25789} = \frac{4 (5 \delta_1 \delta_2 t_3 - 8 t_9 t_{13})(\delta_1 \delta_2 t_3 - 25 t_9 t_{13}) \beta_2 \gamma^2 t_{12}^2}{9 \beta_1 \delta_2^2 t_9} = 0.\]
These imply 
\begin{equation} \label{Typ2feq1} \delta_1 \delta_2 t_3 - 4 t_9 t_{13} = 0 \quad \text{or} \quad \delta_1 \delta_2 t_3 - 7 t_9 t_{13} = 0,
\end{equation} 
and
\begin{equation} \label{Typ2feq2} 5 \delta_1 \delta_2 - 8 t_9 t_{13} = 0 \quad \text{or} \quad \delta_1 \delta_2 t_3 - 25 t_9 t_{13} = 0.
\end{equation}
Examining (\ref{Typ2feq1}) and (\ref{Typ2feq2}) we see that regardless of which choice is selected in each case, the only way to have equality hold is if 
\[t_3 = t_9 t_{13} = 0.\]
It follows that no positive Type II parametrizing sequences may exist. This completes the proof of Proposition \ref{contra}, and hence the proof of Proposition \ref{mainprop2}.


\begin{thebibliography}{10}
	
		\bibitem{Bri}
		J.~L.~Britton, \emph{Integer solutions of systems of quadratic equations}, Math.~Proc.~Camb.~Phi.~Soc., (3) \textbf{86} (1979), 385--389. 
		
		\bibitem{Bu}
		J.~R.~B\"{u}chi, \emph{The collected works of J.~Richard B\"{u}chi} (edited by S.~Maclane and D.~Siefkes). Springer Verlag. New York. 1990. 
		
		\bibitem{DMR}
		M.~Davis, Y.~Matiyasevich, J.~Robinson, \emph{Hilbert's Tenth Problem: Diophantine Equations: Positive Aspects of a Negative Solution},  Mathematical Developments Arising from Hilbert Problems. Proceedings of Symposia in Pure Mathematics. Vol. XXVIII.2. American Mathematical Society, 323--378.
		
		\bibitem{Maz}
		B.~Mazur, \emph{Questions of decidability and undecidability in number theory}, J.~Symb.~Logic \textbf{59} (1994), 353--371. 
		
		\bibitem{Pasel} 
		H.~Pasten, T.~Pheidas, X.~Vidaux, \emph{A survey on B\"{u}chi's problem: new presentations and open problems}, Zapiski Nauchnykh Seminarov POMI \textbf{377} (2010), 111--140.
		
		\bibitem{Pin}
		R.~G.~E.~Pinch, \emph{Squares in quadratic progression}, Math.~Comp. \textbf{60} (1993), 841--845.		
		
		
		\bibitem{Vid}
		X.~Vidaux, \emph{Polynomial parametrizations of length-4 B\"{u}chi sequences}, Acta.~Arith.  (3) \textbf{150} (2011), 209--226. 
		
		\bibitem{Voj}
		P.~Vojta, \emph{Diagonal quadratic forms and Hilbert’s tenth problem} (261-274), Hilbert's tenth problem: relations with arithmetic and algebraic geometry (edited by J.~Denef et al.), Contemp.~Math \textbf{270}, Amer.~Math.~Soc., Providence, RI. 
		
		
		
		
		
		
	\end{thebibliography}
\end{document}